\newcommand{\sphere}{\real/\integer}
\newcommand{\real}{\mathbb{R}}
\newcommand{\integer}{\mathbb{Z}}
\newcommand{\NN}{\mathbb{N}}
\newtheorem{thm}{Theorem}
\newtheorem{cor}{Corollary}
\newtheorem{prop}{Proposition}
\newtheorem{lem}{Lemma}
\theoremstyle{remark}
\newtheorem*{rem}{Remark}
\newtheorem*{Ack}{Acknowledgment}
\theoremstyle{definition}
\newtheorem{defn}{Definition}[section]
\begin{document}
\begin{abstract}    % type your abstract below
  The scattering data of a Riemannian manifold with boundary record the incoming and outgoing directions of each geodesic passing through.
  We show that the scattering data of a generic Riemannian surface with no trapped geodesics and no conjugate points determine the lengths of geodesics.
  Counterexamples exists when trapped geodesics are allowed.
\end{abstract}
\author[C.~Croke]{Christopher B. Croke$^+$}
\address{Department of Mathematics, University of Pennsylvania, Philadelphia, PA 19104-6395, USA}
\email{ccroke@math.upenn.edu}
\thanks{$+$ Supported in part by NSF grant DMS 10-03679}

\author[H.~Wen]{Haomin Wen$^{\dagger}$}
\address{Max-Planck-Institut f\"ur Mathematik, Vivatsgasse 7, D-53111 Bonn, Germany}
\email{hwen@mpim-bonn.mpg.de}
\thanks{${\dagger}$ Supported by the Max Planck institute for Mathematics - Bonn}
%\author{Christopher Croke, Haomin Wen}
\title{Scattering data versus lens data on surfaces}

\maketitle
\section{Introduction}
\subsection{Scattering data and lens data}
Let $M$ be a Riemannian manifold.
Let $\pi : \Omega M \rightarrow M$ be the unit tangent bundle of $M$
and $\Omega_x M$ be the set of unit tangent vectors at $x$ for any $x \in M$.
Let $\partial \Omega M$ be the boundary of the unit tangent bundle of $M$.
In other words, $\partial \Omega M = \bigcup_{x \in \partial M} \Omega_x M$.
For each $x \in \partial M$,
let $\nu_M(x)$ be the unit normal vector of $M$ pointing inwards at $x$.
Then put
$\partial_+ \Omega_x M = \{X \in \Omega_x M: (X, \nu_M(x))_{g_M} > 0\}$,
$\partial_0 \Omega_x M = \{X \in \Omega_x M: (X, \nu_M(x))_{g_M} = 0\}$,
and $\partial_- \Omega_x M = \{X \in \Omega_x M: (X, \nu_M(x))_{g_M} < 0\}$.
Also,
write
$\partial_+ \Omega M = \bigcup_{x \in \partial M} \partial_+ \Omega_x M$,
$\partial_0 \Omega M = \bigcup_{x \in \partial M} \partial_0 \Omega_x M$,
and $\partial_- \Omega M = \bigcup_{x \in \partial M} \partial_- \Omega_x M$.

For each $X \in \partial_+ \Omega M$,
there is a geodesic $\gamma_X$ whose initial tangent vector is $X$.
Extend the geodesic as long as possible until it touches the boundary $\partial M$ again.
Put $\tau(X) := \ell(\gamma_X)$, the length of $\gamma_X$.

If the geodesic $\gamma_X$ is of finite length,
call its tangent vector at the other end point $\alpha_M(X)$.
(See Figure \ref{fig:scattering_relation}.)
The map $\alpha_M : \partial_+ \Omega M \rightarrow \partial \Omega M$
defined above is called the \emph{scattering relation} of $M$.
Note that $\alpha_M(X)$ will be undefined if $\gamma_X$ is of infinite length.
\begin{figure}[h]
  \center
  \includegraphics[width=0.25\textwidth]{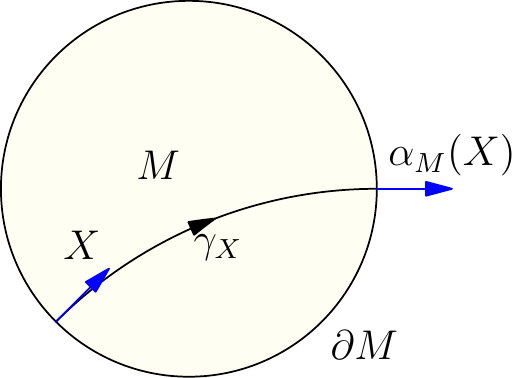}
  \caption{The scattering map $\alpha_M$}
  \label{fig:scattering_relation}
\end{figure}

Suppose that we have two Riemannian manifolds $(M, g_M)$, $(N, g_N)$
and an isometry $h : \partial M \rightarrow \partial N$ between their boundaries.
Then there is a natural bundle map $\varphi : \partial \Omega M \rightarrow \partial \Omega N$ defined as
\begin{align}
  \varphi(a X + b \nu_M(x)) = a h_*(X) + b \nu_N(h(x))
  \label{eq:phi}
\end{align}

For any unit vector $X$ based at $x$ tangent to $\partial M$ and real numbers $a$ and $b$ such that $a^2 + b^2 = 1$.
\begin{defn}
  $M$ and $N$ are said to have the same \emph{scattering data rel $h$} if $\varphi \circ \alpha_M = \alpha_N \circ \varphi$.
  If we also have $\ell(\gamma_X) = \ell(\gamma_{\varphi(X)})$,
  then we say $M$ and $N$ have the same \emph{lens data rel $h$}.
\end{defn}
We will omit ``rel $h$'' when $h$ is clear from the context.

The difference between lens data and scattering data is quite subtle
since lengths of geodesics can be recovered locally from scattering data up to a constant using the first variation of arc length \cite{Mi}.
However, the scattering data do not necessarily determine the lens data completely.
For example, consider the two Riemannian manifolds in Figure \ref{fig:ex1} (which contain trapped geodesics) where the second is obtained from the first by removing a round hemisphere and identifying antipodal points on the boundary great circle.  The two surfaces have the same scattering data but different lens data.  If a geodesic in the first manifold does not enter the hemisphere part then the corresponding geodesic in the second looks the same and has the same length.  However for geodesics that enter the hemisphere the lengths of corresponding geodesics differ by a constant equal to the (intrinsic) diameter of the hemisphere.

\begin{figure}[h]
    \centering
    \begin{subfigure}[b]{0.20\textwidth}
      \centering
      \includegraphics[width=\textwidth]{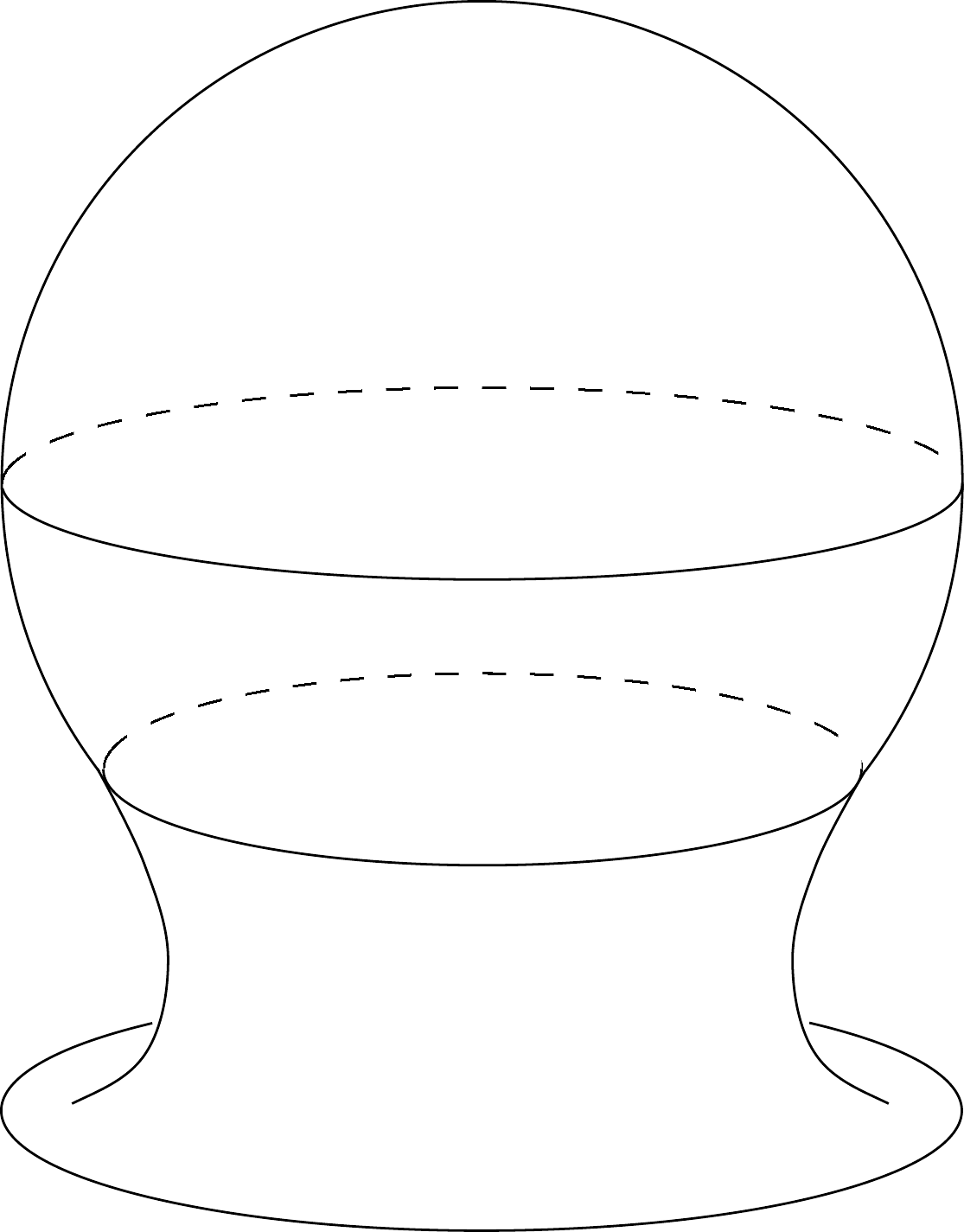}
      \caption{}
      \label{c1}
    \end{subfigure}
    \quad
    \begin{subfigure}[b]{0.20\textwidth}
      \centering
      \includegraphics[width=\textwidth]{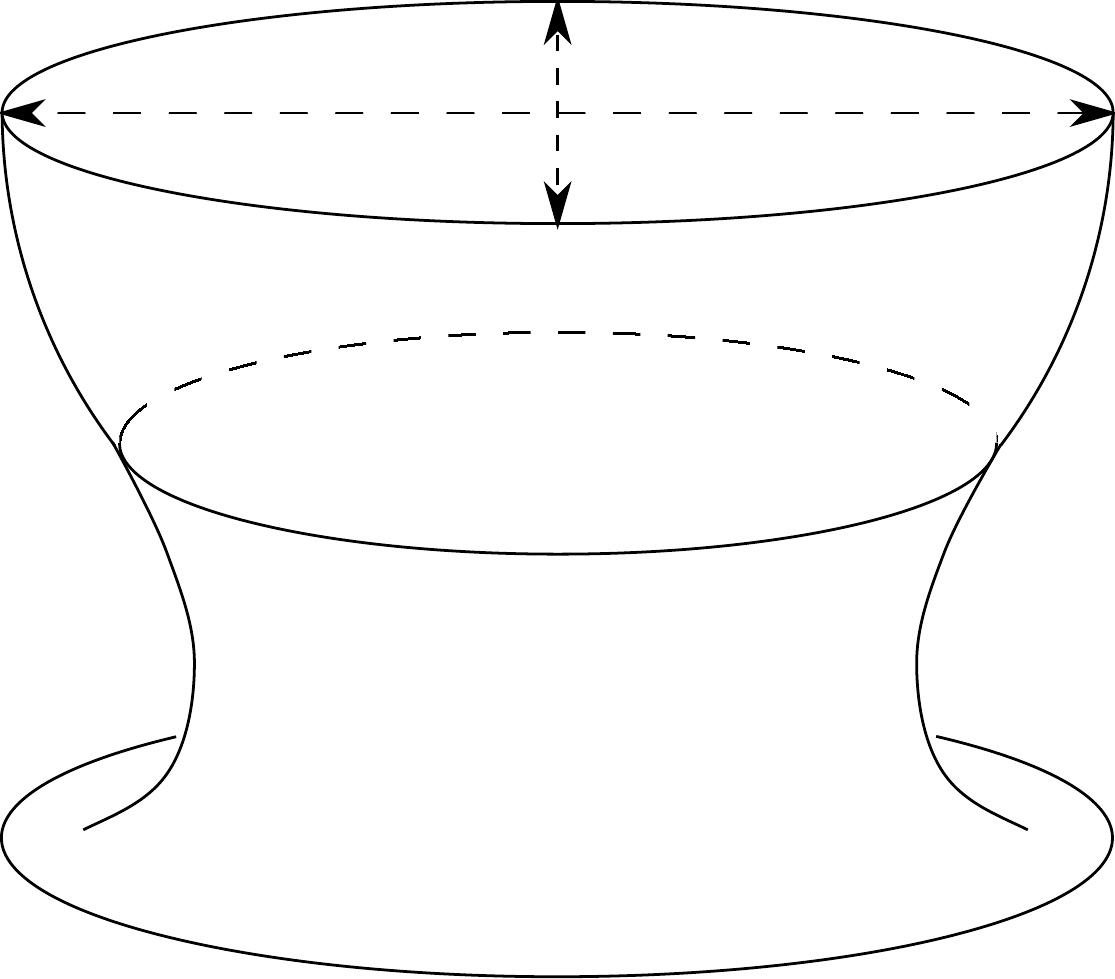}
      \caption{}
      \label{c2}
    \end{subfigure}
    \caption{Same scattering data but different lens data. Here \ref{c2} is obtained from \ref{c1} by removing an upper hemisphere and then identifying antipodal points of the boundary component on the top.}
    \label{fig:ex1}
\end{figure}

\begin{defn}
  $p \in \partial M$ is called a switch point if the geodesic curvature of $\partial M$
  is $0$ at $p$ but not always $0$ in any neighborhood of $p$.
  The set of switch points of $M$ is denoted by $F_M$.
\end{defn}

In general $F_M$ could even be a Cantor set with positive measure, which would create a lot of technical difficulties.  In this paper we will only address the generic case where $F_M$ is finite.

When we say a manifold has no conjugate points we mean that for every
geodesic segment $\gamma:[0,L]\to M$ all nontrivial Jacobi fields can vanish
at most once.
Note that the geodesic here is allowed to be tangent to boundary at points in its interior.
(Also note that this definition is not the same as the one in \cite%
{ABB3}.)

\begin{thm}
  Suppose that we have two compact smooth Riemannian surfaces $(M, g_M)$, $(N, g_N)$
  and an isometry $h : \partial M \rightarrow \partial N$ between their boundaries.
  Assume that $F_M$  is finite, and $M$ has no trapped geodesics (including closed geodesics) and no conjugate points.
  Then $M$ and $N$ have the same scattering data rel $h$ if and only if they have the same lens data rel $h$.
  \label{thm:main}
\end{thm}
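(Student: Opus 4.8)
The ``if'' direction is immediate, since having the same lens data includes having the same scattering data by definition. For the converse, suppose $M$ and $N$ have the same scattering data rel $h$. Since $\varphi$ is a diffeomorphism $\partial\Omega M\to\partial\Omega N$ built from the isometry $h$, and $M$ has no trapped geodesics (so $\alpha_M$ is everywhere defined and, via $\alpha_N\circ\varphi=\varphi\circ\alpha_M$, the relevant geodesics of $N$ also return to $\partial N$), the function $f\colon\partial_+\Omega M\to\real$, $f(X):=\ell(\gamma_X)-\ell(\gamma_{\varphi(X)})$, is well defined; the goal is $f\equiv0$. First I would fix the open set $G\subset\partial_+\Omega M$ of those $X$ for which $\gamma_X$ meets $\partial M$ only at its two endpoints, transversally there (so $\alpha_M(X)\in\partial_-\Omega M$), and avoids $F_M$; on $G$ the chord $\gamma_X$, its exit time, and --- since $\varphi$ is smooth and preserves the $\partial_\pm,\partial_0$ splitting --- also $\gamma_{\varphi(X)}$ and its exit time depend smoothly on $X$ (the geodesic flow is smooth regardless of conjugate points, and transversal exits are smooth by the implicit function theorem). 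Differentiating along a path $X(s)$ in $G$ and using the first variation of arc length (cf.\ \cite{Mi}): the derivative of $\ell(\gamma_{X(s)})$ is a difference of inner products of the endpoint velocities of $\gamma_{X(s)}$ with the boundary-tangent variation field, and because $h$ is an isometry of the boundaries and $\varphi$ intertwines the scattering relations (so that $\langle h_*u,\varphi(X)\rangle=\langle u,X\rangle$ for $u$ tangent to $\partial M$), the corresponding terms for the $N$-family coincide with those for the $M$-family. Hence $f$ is locally constant on $G$, so constant on each connected component; note also that $f$ is invariant under chord reversal $X\mapsto-\alpha_M(X)$, since $\varphi(-Y)=-\varphi(Y)$ and $\tau$ is reversal-invariant.

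The crux is to upgrade this to: $f$ is globally constant. The bad set $B=\partial_+\Omega M\setminus G$ consists of the $X$ whose geodesic grazes $\partial M$ at an interior point, exits tangentially ($\alpha_M(X)\in\partial_0\Omega M$, including $\partial_0\Omega M$ itself), or passes through a switch point; because $F_M$ is finite, and using no conjugate points to rule out refocusing of the grazing geodesics, $B$ is a finite union of sets of codimension $\ge1$ and does not locally disconnect $\partial_+\Omega M$ (which, for a surface, is a finite union of annuli). I would then argue that $f$ extends continuously across $B$: near a grazing or tangential configuration the flow is still smooth, and the no--trapped--geodesics hypothesis lets one push a chord through the grazing point and exhibit the two $G$-components abutting $B$ as limits of a single continuous family of chords, along which $\tau$ and $\tau\circ\varphi$ vary consistently --- the same near-tangency occurs in $N$ because the scattering data, hence the near-boundary behaviour of geodesics, is unchanged. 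Finally, the absence of trapped geodesics provides enough connectivity --- through such families and through reversals --- to link every chord to every other, forcing $f\equiv c$ for a single constant $c$. This middle step is the main obstacle, and it is exactly where all three hypotheses are used (finiteness of $F_M$ to keep $B$ manageable, no conjugate points to prevent refocusing, no trapped geodesics for connectivity); the surfaces of Figure \ref{fig:ex1}, whose trapped great circles split off a family of chords on which $f$ is a nonzero constant, show that they cannot be dropped.

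It remains to show $c=0$. By Gauss--Bonnet together with the no--conjugate--points hypothesis (and the absence of trapped geodesics), $\partial M$ cannot be everywhere non-strictly-convex: either it contains a geodesic arc --- whose image under $h$ is then forced by the scattering data to be a geodesic arc of $\partial N$ of the same length, so $f=0$ on the corresponding vectors --- or there is a point $p\notin F_M$ near which $\partial M$ is strictly convex. In the latter case, for $X\to X_0\in\partial_0\Omega_pM$ the chord $\gamma_X$ shrinks to $p$, so $\tau(X)\to0$; and since strict convexity at $p$ is equivalent (for $p\notin F_M$) to the scattering relation extending continuously to $\partial_0\Omega_pM$ as the identity, the identity $\alpha_N\circ\varphi=\varphi\circ\alpha_M$ forces $\alpha_N$ to extend as the identity over $\partial_0\Omega_{h(p)}N$, which (a concave or flat point of $N$ would carry those vectors deep into $N$, away from $h(p)$) makes $h(p)$ strictly convex in $N$, so $\tau(\varphi(X))\to0$ too. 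Hence $c=\lim_{X\to X_0}f(X)=0$, and $f\equiv0$, i.e.\ $M$ and $N$ have the same lens data rel $h$.
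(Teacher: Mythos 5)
The easy direction, the definition of $f=e$ on transversal chords, and its local constancy via first variation are fine and correspond to the paper's Proposition \ref{prop:constante}. But the two steps you compress are exactly where the theorem lives, and as stated they do not work. On global constancy: in the $2$--dimensional manifold $\partial_+\Omega M$ the set of vectors whose chord grazes a concave boundary arc is generically a union of curves, i.e.\ codimension one, and such walls \emph{do} locally disconnect $\partial_+\Omega M$; moreover continuity of $f$ across a wall is not automatic, because when a chord degenerates to a broken path running partly along $\partial M$ the corresponding object in $N$ is only controlled after one knows that $h$ matches the concave and totally geodesic boundary pieces (Propositions \ref{prop:concave} and \ref{prop:totally_geodesic}), extends the correspondence to a map $\Phi$ on p-geodesics, and then replaces ``connectivity'' by an actual argument. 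The paper does this with the pairs machinery: $Q_M(p,q)$ is a $1$--manifold whose component through $(\gamma,\gamma_q)$ and $(\gamma_p,-\gamma)$ is a circle (Proposition \ref{prop:reverse}), which is where no conjugate points (through Propositions \ref{minimizing} and \ref{prop:no_conj}) and no trapped geodesics are really used. You explicitly flag this step as ``the main obstacle,'' so the proposal is a plan here, not a proof.

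The evaluation $c=0$ is the more serious gap, and your argument for it is a non sequitur. Knowing $\alpha_N\circ\varphi=\varphi\circ\alpha_M$ and that the exit data of $\gamma_{\varphi(X)}$ converge to $\varphi(X_0)$ as $X\to X_0\in\partial_0\Omega_pM$ at a strictly convex $p$ does \emph{not} force $\ell(\gamma_{\varphi(X)})\to 0$: the lengths may converge to some $L>0$, which happens precisely when $N$ carries a closed geodesic of length $L$ tangent to $\partial N$ at $h(p)$ --- a configuration fully compatible with the scattering relation ``extending continuously as the identity'' on $\partial_0$, and exactly the mechanism behind the counterexample of Figure \ref{fig:ex1} (so what fails there is not mere connectivity, as you suggest). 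Ruling out $L\neq 0$ is the content of the paper's Section 5 (Proposition \ref{prop:no_trapped}), proved by a genuinely topological argument: from the nearly tangent chords one builds an embedded torus $f:\partial N\times\real/\integer\to P\Omega N$, shows $f(x_0,\cdot)$ would be contractible and isotopically trivial if such a tangent closed geodesic existed, and contradicts \cite[Theorem 3.14]{We}; the same issue is what makes $e(\gamma_p)=e(\gamma_q)=0$ legitimate in the paper's final argument. Nothing in your local analysis substitutes for this step, so the proof is incomplete at both the global-constancy and the normalization stages.
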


\subsection{Scattering rigidity and lens rigidity}

\begin{defn}
  $M$ is \emph{scattering rigid} (resp.\ \emph{lens rigid}) if the space $M$ and the metric on $M$ is determined by its scattering data (resp.\ lens data) up to an isometry which leaves the boundary fixed.
\end{defn}

Showing that a manifold is scattering or lens rigid is an example of a geometric inverse problem (for which there is a vast literature).

A number of manifolds are known to be lens rigid:
\begin{enumerate}
  \item Simple Riemannian surfaces with boundary (L.\ Pestov--G.\ Uhlmann, \cite{pestov-uhlmann-2005})
  \item Compact subdomains of $\real^n$ with flat metrics (M.\ Gromov, \cite{gromov1983filling}) or metrics close to that (D.\ Burago--S.\ Ivanov, \cite{burago2010boundary})
  \item Compact subdomains of open hemispheres (R.\ Michel , \cite{Mi})
  \item Almost hyperbolic metrics (D.\ Burago--S.\ Ivanov, \cite{burago2013area})
  \item Compact subdomains of symmetric spaces of negative curvature (G.\ Besson--G.\ Courtois--S.\ Gallot, \cite{MR1354289})
  \item $D^n \times \sphere^1$ when $n > 1$ (C.\ Croke, \cite{croke2014scattering}) and when $n = 1$ (C.\ Croke--P.\ Herreros, \cite{croke-herreros-2011})

\end{enumerate}
However, very few are known to be scattering rigid:
\begin{enumerate}
  \item
    $D^n \times \sphere^1$ when $n > 1$ (C.\ Croke, \cite{croke2014scattering})
    (It is still not known if the flat annulus $D^1 \times \sphere^1$ is scattering rigid.)
  \item
    Simple Riemannian surfaces with boundary (H. Wen, \cite{We})
\end{enumerate}

Scattering rigidity and lens rigidity are equivalent when the scattering data determine the lens data. Hence we have the following corollary of Theorem \ref{thm:main}

\begin{cor}
  Suppose that
  $M$ satisfies the conditions in Theorem \ref{thm:main},
  then $M$ is scattering rigid if and only if $M$ is lens rigid.
  \label{cor:main}
\end{cor}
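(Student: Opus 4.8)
The plan is to derive the corollary formally from Theorem \ref{thm:main}, using the observation that the lens data refine the scattering data. By definition, if $M$ and $N$ have the same lens data rel $h$ then in particular $\varphi \circ \alpha_M = \alpha_N \circ \varphi$, so they already have the same scattering data rel $h$; the converse is precisely the content of Theorem \ref{thm:main} under the stated hypotheses on $M$. So I would first record this inclusion of data, and then run two short quantified arguments, one for each implication in the corollary.

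For the implication scattering rigid $\Rightarrow$ lens rigid, I would take an arbitrary pair $(N,h)$ with $h : \partial M \to \partial N$ a boundary isometry and $M$, $N$ having the same lens data rel $h$. As just noted, such a pair automatically shares the scattering data of $M$, so scattering rigidity of $M$ furnishes an isometry $M \to N$ agreeing with $h$ on $\partial M$. Since $(N,h)$ ranges over all competitors for the lens data, this shows $M$ is lens rigid. This half uses only the definitions, not the hypotheses of Theorem \ref{thm:main}.

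For the reverse implication lens rigid $\Rightarrow$ scattering rigid, I would take an arbitrary pair $(N,h)$ with the same scattering data rel $h$ as $M$. Here I would invoke Theorem \ref{thm:main}: because $F_M$ is finite and $M$ has no trapped geodesics and no conjugate points, the theorem applies to this pair with this particular $h$ and upgrades ``same scattering data'' to ``same lens data'' rel $h$. Lens rigidity of $M$ then yields an isometry $M \to N$ agreeing with $h$ on $\partial M$, and as $(N,h)$ was an arbitrary scattering-data competitor, $M$ is scattering rigid.

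The step I expect to require the most care --- and the only place the corollary is not purely tautological --- is the application of Theorem \ref{thm:main} inside the reverse implication: its hypotheses are imposed on $M$ alone, so I must make sure they genuinely suffice to promote equal scattering data to equal lens data for \emph{every} admissible competitor $N$ and \emph{every} boundary identification $h$ quantified over in the definition of rigidity, not merely for a single fixed comparison. All the real geometric work sits in Theorem \ref{thm:main}; granting it, the corollary follows from the inclusion of data together with the two quantified implications above.
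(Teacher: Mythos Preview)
Your proposal is correct and matches the paper's reasoning: the paper simply observes that ``scattering rigidity and lens rigidity are equivalent when the scattering data determine the lens data'' and deduces the corollary from Theorem~\ref{thm:main}, which is exactly the two-implication argument you spell out. Your expansion is a faithful unpacking of that one sentence, with the (correct) remark that only the direction lens rigid $\Rightarrow$ scattering rigid actually invokes Theorem~\ref{thm:main}.
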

\begin{rem}
  Simple Riemannian manifolds \cite{Mi} and, more generally, SGM (strongly geodesically minimizing) manifolds \cite{croke-1991} are conjectured to be lens rigid.
  Most SGM surfaces satisfy our conditions since they have no trapped geodesics and they have conjugate points only in some non-generic cases.
\end{rem}

\section{Space of geodesics}

Geodesics on a smooth Riemannian manifold without boundary satisfy the geodesic equation
and they minimize the length locally.

However, a curve on a smooth Riemannian manifold with boundary that does not satisfy the geodesic equation may still minimize the length locally
if part of the curve runs along the boundary.
\begin{defn}
  A smooth curve $\gamma : [a, b] \rightarrow M$ is called a \emph{geodesic} if it satisfies the geodesic equation $\nabla_{\gamma'}\gamma' = 0$.
  A curve $\gamma$ in $M$ is called a \emph{p-geodesic} if
  it minimize the length locally,
  that is, for any $t \in [a, b]$,
  there is $\delta > 0$
  such that $\gamma|_{[t, t']}$
  is the shortest curve
  connecting $\gamma(t)$ and $\gamma(t')$
  for any $t'$
  such that $|t - t'| < \delta$.
\end{defn}

The basic properties of $p$-geodesics were studied in \cite{ABB1} (also see \cite
{ABB2,ABB3}) (in these references p-geodesics are simply referred to as geodesics).
In particular such a p-geodesic is a $C^1$ path. The path is the union of
not only interior segments (smooth geodesic segments in the usual sense) and
boundary segments (smooth geodesic segments of the boundary) and switch
points (where the path joins two of the previous types and is not twice
differentiable) but also accumulation points of switch points, called
intermittent points. On can even have cantor sets of positive measure of
intermittent points. The boundary will have 0 curvature (as will the $p$%
-geodesic) at intermittent points. In two dimensions it is also easy to see
that if $\gamma(t)\in \partial M$ then the boundary is concave at $\gamma(t)$
(though maybe not strictly) since otherwise one could locally shorten the
curve.

In section \ref{NCP} we will prove:

\begin{prop}
\label{minimizing}

Let $M$ be a compact 2-manifold (with or without boundary) that has no
conjugate points and finite $F_M$, and $\gamma$ a p-geodesic segment between points $x\in M$
and $y\in M$. Then for any curve $\tau$ from $x$ to $y$ homotopic to $\gamma$
(relative to $x$ and $y$) we have $L(\gamma)\leq L(\tau)$.  Further $L(\gamma)=L(\tau)$ only when $\gamma=\tau$ up to parametrization.
\end{prop}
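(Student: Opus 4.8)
The plan is to lift everything to the universal cover and reduce the statement to a uniqueness assertion about a single p-geodesic. Let $\pi\colon\widetilde M\to M$ be the universal cover with the pulled-back metric, fix a lift $\widetilde x$ of $x$, and lift $\gamma$ to a p-geodesic $\widetilde\gamma\colon[0,L]\to\widetilde M$, parametrized by arclength, starting at $\widetilde x$; denote its other endpoint $\widetilde y$. Since $\tau$ is homotopic to $\gamma$ rel $x$ and $y$, the lift of $\tau$ starting at $\widetilde x$ also ends at $\widetilde y$, so it suffices to prove that $\widetilde\gamma$ is the unique shortest path (up to parametrization) from $\widetilde x$ to $\widetilde y$ in $\widetilde M$: then $L(\widetilde\gamma)\le L(\widetilde\tau)$ for every lift $\widetilde\tau$ of $\tau$, with equality only if $\widetilde\tau=\widetilde\gamma$. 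Being a cover of a compact length space, $\widetilde M$ is a complete, locally compact length space, so shortest paths between any two points exist, closed metric balls are compact, and shortest paths are p-geodesics. Crucially, finiteness of $F_M$ forces every p-geodesic segment we meet to be a \emph{finite} concatenation of smooth geodesic arcs---interior geodesic segments and arcs of zero-geodesic-curvature boundary---joined $C^1$ at switch points, which are precisely the interior--to--boundary transition points; this is what lets the no-conjugate-points hypothesis (a statement about smooth geodesic segments, possibly tangent to $\partial M$) be applied piece by piece.

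We are thus reduced to a p-geodesic analogue of the classical fact that a simply connected manifold without conjugate points has an injective exponential map. Put
\[ I=\{\,t\in[0,L]\ :\ \widetilde\gamma|_{[0,t]}\text{ is the unique shortest path from }\widetilde x\text{ to }\widetilde\gamma(t)\,\}. \]
A subarc of a unique shortest path is again a unique shortest path, so $I$ is an interval; and since a sufficiently short p-geodesic near the concave boundary is the unique shortest path between its endpoints (a local matter, cf.\ \cite{ABB1}), $I$ contains $[0,\varepsilon)$ for some $\varepsilon>0$. We show $I$ is both closed and open in $[0,L]$, hence equal to $[0,L]$, which is the proposition. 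Closedness is a compactness argument: if $t_n\to t_0$ with $t_n\in I$, continuity of the distance function gives that $\widetilde\gamma|_{[0,t_0]}$ is still minimizing, and extracting---via Arzel\`a--Ascoli and lower semicontinuity of length---a limit of minimizers to the $\widetilde\gamma(t_n)$ and comparing it with an arbitrary minimizer to $\widetilde\gamma(t_0)$ reduces uniqueness at $t_0$ to excluding a pair of distinct shortest paths between two points that share a common initial subarc, which is handled by the same mechanism as openness.

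Openness is the heart, and the only place the absence of conjugate points enters. Suppose $t_0\in I$, $t_0<L$, but for every $\varepsilon>0$ the interval $[t_0,t_0+\varepsilon)$ is not contained in $I$. Then there are $t_n\downarrow t_0$ and shortest paths $\sigma_n$ from $\widetilde x$ to $\widetilde\gamma(t_n)$ with $\sigma_n\ne\widetilde\gamma|_{[0,t_n]}$ (up to reparametrization) and $L(\sigma_n)\le t_n$. By compactness of closed balls and lower semicontinuity of length, a subsequence of the $\sigma_n$ converges uniformly to a shortest path from $\widetilde x$ to $\widetilde\gamma(t_0)$ of length at most $t_0$, which by uniqueness at $t_0$ must be $\widetilde\gamma|_{[0,t_0]}$. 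So $\{\sigma_n\}$ and $\{\widetilde\gamma|_{[0,t_n]}\}$ are two sequences of p-geodesics with common endpoints $\widetilde x,\widetilde\gamma(t_n)$, both converging to $\widetilde\gamma|_{[0,t_0]}$, yet distinct for every $n$. The plan is to rescale the difference of the two p-geodesics by the distance between them and pass to the limit, obtaining a nonzero continuous field along $\widetilde\gamma|_{[0,t_0]}$ that restricts to a Jacobi field on each of the finitely many smooth geodesic subarcs, vanishes at $\widetilde x$ and at $\widetilde\gamma(t_0)$ (the two endpoints being matched for every $n$), and whose component perpendicular to $\widetilde\gamma$ vanishes at every switch point as well---because there the perturbed curve's entry/exit point is constrained to $\partial M$, which is tangent to $\widetilde\gamma$ at that point. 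If the perpendicular component is not identically zero, then on some smooth geodesic subarc it is a nontrivial perpendicular Jacobi field vanishing at both endpoints of that subarc, contradicting the no-conjugate-points hypothesis; if it is identically zero, the field is tangential, hence (vanishing at the ends) zero, contradicting its nontriviality.

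The main obstacle I anticipate is exactly making this limiting analysis rigorous in the presence of boundary arcs and switch points: one must establish the needed structural facts about p-geodesics (that shortest paths are p-geodesics that are finite $C^1$ concatenations of smooth geodesic arcs, that switch points are interior--boundary transitions, that short p-geodesics uniquely minimize), obtain the requisite convergence of the minimizers $\sigma_n$ to $\widetilde\gamma|_{[0,t_0]}$---uniform, and $C^1$ away from switch points---so that the rescaled difference genuinely produces a broken Jacobi field, and control that field near the switch points of $\widetilde\gamma$ and of the $\sigma_n$ so that any resulting degeneracy descends to ordinary conjugacy along a smooth geodesic subsegment, where the hypothesis bites; finiteness of $F_M$ is what keeps the number of such subsegments finite. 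Once this switch-point picture is in place, the local uniqueness used to start off the connectedness argument and the equality case ($L(\gamma)=L(\tau)$ forces $\gamma=\tau$ up to parametrization, via uniqueness of shortest paths in $\widetilde M$) are routine.
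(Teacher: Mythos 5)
Your global frame (lift to the universal cover, reduce to showing the lifted p-geodesic is the unique shortest path between its endpoints, then run an open--closed argument in the parameter) is reasonable, and it is genuinely different from the paper's route, which never differentiates anything: the paper first proves a strict local minimality statement for p-geodesics (Lemma \ref{localmin}) by a purely metric replacement argument --- any nearby competitor must cross the extension of each interior geodesic piece in two points, and strict concavity pins the boundary contact --- and then globalizes by a minimax (mountain-pass) argument on a finite-dimensional space of broken p-geodesics, where a minimax critical point would have to be a p-geodesic that is not a strict local minimum.

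The gap in your proposal is the openness step, which is exactly where the content of the theorem lies and which you yourself defer as ``the main obstacle.'' The blow-up of the difference of two nearby p-geodesics to a broken Jacobi field is problematic for concrete reasons: (i) p-geodesics do not depend differentiably on their endpoints across changes of tangency pattern (a boundary segment can appear, disappear, or change its contact interval), so the rescaled differences need not converge to anything piecewise-Jacobi; (ii) the breaks of any limit field occur at limits of the contact points of the competitors $\sigma_n$, and these need not be switch points of $\gamma$ --- they can accumulate at a grazing point interior to a smooth subarc, where you have no argument that the normal component vanishes, so the dichotomy ``nontrivial normal Jacobi field vanishing at both ends of a subarc, or purely tangential'' does not follow; (iii) nondegeneracy of the limit requires a two-sided quantitative comparison between the normalizing distance and the candidate field, which is delicate precisely where the separation concentrates near boundary contact (there the normal displacement is second order in the shift of the departure point), and a nonzero purely tangential limit must also be excluded by the choice of normalization; (iv) along strictly concave boundary arcs the displacement field is not a Jacobi field of $M$ at all --- note also that boundary arcs of p-geodesics lie along concave (typically strictly concave) boundary, not along zero-curvature boundary as you state, and interior/boundary transitions occur at tangential departure points that are generally not in $F_M$. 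Without supplying these analytic facts the contradiction with the no-conjugate-points hypothesis is not reached; this is precisely the work that the paper's Lemma \ref{localmin} and its minimax argument are designed to replace.
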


We should remark that that the assumption that $F_M$ is finite in Proposition \ref{minimizing} is probably unnecessary.  In particular it is easy to see (and is of independent interest):

\begin{cor}
Let $M$ be a compact 2-manifold (with or without boundary) that has no
conjugate points, and $\gamma$ a geodesic segment between points $x\in M$
and $y\in M$. Then for any curve $\tau$ from $x$ to $y$ homotopic to $\gamma$
(relative to $x$ and $y$) we have $L(\gamma)\leq L(\tau)$.  Further $L(\gamma)=L(\tau)$ only when $\gamma=\tau$ up to parametrization.
\end{cor}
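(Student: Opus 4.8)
The plan is to reduce everything to Proposition \ref{minimizing}. When $\partial M=\emptyset$ the set $F_M$ is empty, hence finite, and a geodesic is in particular a p-geodesic, so the statement is the special case $F_M=\emptyset$ of Proposition \ref{minimizing}. (In this closed case it is in any event classical: the universal cover $\widetilde M$ is complete, simply connected and has no conjugate points, so $\exp_{\tilde x}$ is a local diffeomorphism; pulling the metric back by $\exp_{\tilde x}$, the radial geodesics become complete lines, so by Hopf--Rinow the resulting manifold is complete, whence $\exp_{\tilde x}$ is a Riemannian covering map and therefore a diffeomorphism, and geodesics are the unique shortest curves between their endpoints.) So we may assume $\partial M\neq\emptyset$, where $F_M$ need not be finite.

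For the boundary case I would enlarge $M$. First I would record that a genuine geodesic meets $\partial M$ only tangentially, and only along concave arcs: if $\gamma(t)\in\partial M$ then $\gamma'(t)$ is tangent to $\partial M$, and the geodesic curvature of $\partial M$ at $\gamma(t)$ (with respect to the inward normal) is $\leq 0$, since otherwise $\gamma$, being straighter than $\partial M$, would curve out of $M$ near $t$. Consequently a thin one-sided collar suffices to push $\gamma$ off the boundary: glue $\partial M\times[0,\varepsilon]$ onto $M$ along $\partial M$ to form a compact surface $\widehat M\supseteq M$, choosing the collar metric to extend that of $M$ smoothly, to have non-positive curvature in the collar (away from a thin strip along $\partial M$, where it is forced to match $g_M$), and so that the new boundary curve has geodesic curvature vanishing at only finitely many points. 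Then $F_{\widehat M}$ is finite, $\gamma$ lies in the interior of $\widehat M$ and is therefore a p-geodesic of $\widehat M$, and any homotopy rel endpoints from $\tau$ to $\gamma$ in $M$ is also one in $\widehat M$; so Proposition \ref{minimizing} applied to $\gamma$ in $\widehat M$ yields $L(\gamma)\leq L(\tau)$, with equality only if $\gamma=\tau$ up to parametrization.

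The hard part is verifying that $\widehat M$ still has no conjugate points. This is not a soft closedness statement, because a geodesic segment of $\widehat M$ may be arbitrarily long (it can wind around near $\partial M$) and may pass repeatedly between $M$ and the collar, so a limiting argument as $\varepsilon\to 0$ is not available directly, and the thin positively curved strip near $\partial M$ must also be controlled. I would exploit that the collar is non-positively curved where it matters: along any sub-arc of a geodesic of $\widehat M$ lying in the negatively curved part of the collar the squared norm of a Jacobi field is convex, so the collar can only spread Jacobi fields apart, never refocus them; combined with the absence of conjugate points on the sub-arcs lying in $M$ (handling the transitions at $\partial M$ by integration by parts in the index form) this should preclude any Jacobi field vanishing at both endpoints of a geodesic of $\widehat M$. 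This argument is, in effect, a simple case of the analysis in Section \ref{NCP}; an alternative, which avoids the enlargement altogether, is to observe that the Corollary is exactly the case of the proof of Proposition \ref{minimizing} in which the p-geodesic has no switch or intermittent points --- the only feature whose control requires $F_M$ to be finite.
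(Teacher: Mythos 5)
Your overall route is the paper's route: enlarge $M$ across a collar so that the new surface has finite switch set, observe that $\gamma$ becomes an interior geodesic (hence a p-geodesic) and that homotopies rel endpoints persist, and quote Proposition \ref{minimizing}. The place where you diverge is precisely the pivotal step, namely that the enlarged surface still has no conjugate points, and your replacement argument for it does not work. The principle ``non-positive curvature in the collar can only spread Jacobi fields apart, never refocus them, so together with no conjugate points on the sub-arcs in $M$ no Jacobi field can vanish twice'' is false. Non-positive curvature only makes $|J|^2$ convex; a Jacobi field that leaves $M$ with $|J|>0$ but $\frac{d}{dt}|J|<0$ (perfectly compatible with the no-conjugate-point hypothesis on the $M$-piece) can still reach zero inside a flat or negatively curved piece. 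In the scalar Jacobi equation $J''+K(t)J=0$ with $J(0)=0$, taking $K\equiv 1$ on $[0,0.9\pi]$ and $K\le 0$ afterwards produces a second zero after an additional length of about $0.34$; equivalently, two spherical arcs of length $<\pi$ joined through a flat or hyperbolic neck form a geodesic with conjugate points even though each piece has none and the neck is non-positively curved. Positivity of the index form on pieces does not concatenate, so the ``integration by parts at the transitions'' remark does not repair this. Moreover you cannot dismiss the excursion lengths in the collar: near a totally geodesic (or nearly geodesic) stretch of $\partial M$ a geodesic of $\widehat M$ can run alongside the old boundary inside the collar for a length of order one, however small $\varepsilon$ is, so the refocusing scenario is exactly what has to be excluded. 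The paper's proof does this by making the perturbation of the boundary small and invoking the smallness to keep conjugate points from appearing; your construction deliberately discards that smallness argument and replaces it with a curvature-sign argument that is insufficient.

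Your closing alternative --- that the Corollary is just the case of the proof of Proposition \ref{minimizing} in which the p-geodesic has no switch or intermittent points --- also does not stand as stated. The finiteness of $F_M$ is not used only to control the structure of the given $\gamma$: the minimax argument requires Lemma \ref{localmin} for the minimax p-geodesic $\sigma$ and for the comparison piecewise p-geodesics, and when $F_M$ is infinite those auxiliary p-geodesics can have infinitely many switch points or intermittent points even if $\gamma$ itself is a smooth interior geodesic. So to make your write-up correct you either need to supply a genuine proof that conjugate points do not appear in the enlarged surface (this is where the real work lies, and where the thinness of the added region, not merely its curvature sign, must enter), or follow the paper and argue via an arbitrarily small perturbation of the boundary inside a collar extension, keeping the perturbation small enough to preserve both homotopy classes and the no-conjugate-point condition.
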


\begin{proof}
We can extend our metric smoothly to a metric on $M$ union a collar neighborhood of the boundary. We can then
change our boundary by an arbitrarily small amount to an embedded curve that
has a finite $F_M$ and lies totally in the collar neighborhood. If the
perturbation is small enough geodesics in the expanded space will still have
no conjugate points so if the Proposition is true when $F_M$ is finite then
each geodesic segment $\gamma$ minimizes in its homotopy class in the larger
space and hence in the original space (since the perturbation can be made so
as to preserve homotopy classes).
\end{proof}

Let $\tilde{M}$ be the universal cover of $M$.
Let $\Gamma^p_{\tilde{M}}$ be the
space of p-geodesics $[0, 1] \rightarrow \tilde{M}$
with the compact open topology.
Define $(s_M, e_M) : \Gamma^p_{\tilde{M}} \rightarrow \tilde{M} \times \tilde{M}$
as $s_M(\gamma) = \gamma(0)$ and $e_M(\gamma) = \gamma(1)$.

Another easy consequence of Proposition \ref{minimizing} is:

\begin{prop}
\label{producthomeo}
  If ${M}$ has no conjugate points and finite $F_M$, then $(s_M, e_M)$ is a homeomorphism.
  \label{prop:no_conj}
\end{prop}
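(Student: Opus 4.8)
The plan is to use Proposition \ref{minimizing} to show that $(s_M,e_M)$ is a continuous bijection, and then to promote this to a homeomorphism by exhibiting a continuous inverse (or by a properness/local-compactness argument). First I would check surjectivity: given $(x,y) \in \tilde M \times \tilde M$, minimize length over all curves from $x$ to $y$. Since $\tilde M$ is the universal cover of a compact manifold (possibly with boundary), it is complete as a length space, so a shortest curve exists; any shortest curve is a p-geodesic, so $(s_M,e_M)$ hits $(x,y)$. For injectivity, suppose $\gamma_1,\gamma_2 \in \Gamma^p_{\tilde M}$ have the same endpoints $x,y$. Because $\tilde M$ is simply connected, $\gamma_1$ and $\gamma_2$ are homotopic rel endpoints, so Proposition \ref{minimizing} applies to each: $L(\gamma_1) \le L(\gamma_2)$ and $L(\gamma_2)\le L(\gamma_1)$, hence $L(\gamma_1)=L(\gamma_2)$, and the equality clause of Proposition \ref{minimizing} forces $\gamma_1 = \gamma_2$ up to parametrization. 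Since elements of $\Gamma^p_{\tilde M}$ are parametrized on $[0,1]$, I would want to pin down that the parametrization is canonical — most naturally, constant-speed (proportional to arc length) — so that "up to parametrization" becomes genuine equality in $\Gamma^p_{\tilde M}$; I'd state this normalization explicitly.

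Continuity of $(s_M,e_M)$ is immediate from the definition of the compact-open topology (evaluation at the two fixed points $0$ and $1$ is continuous). The substantive point is continuity of the inverse map $(x,y) \mapsto \gamma_{x,y}$, i.e. that the unique minimizing p-geodesic depends continuously on its endpoints in the compact-open topology. Here is where I'd do the real work. Take $(x_k,y_k) \to (x,y)$ and the corresponding p-geodesics $\gamma_k$. The lengths $L(\gamma_k) = d_{\tilde M}(x_k,y_k)$ converge to $d_{\tilde M}(x,y)$, so the $\gamma_k$ are uniformly bounded-length curves with a common compact target (they stay in a fixed compact ball around $x$), parametrized proportionally to arc length; hence they are uniformly Lipschitz and, by Arzelà–Ascoli, subconverge uniformly to some curve $\gamma_\infty$ from $x$ to $y$. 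A standard lower-semicontinuity argument gives $L(\gamma_\infty) \le \liminf L(\gamma_k) = d_{\tilde M}(x,y)$, so $\gamma_\infty$ is minimizing, hence a p-geodesic, hence — by the uniqueness just proved — equal to $\gamma_{x,y}$ with its canonical parametrization. Since every subsequence has a further subsequence converging to the same limit $\gamma_{x,y}$, the full sequence converges, establishing continuity of the inverse.

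I expect the main obstacle to be the compactness/limit step, specifically two subtleties. First, one must ensure the limit curve $\gamma_\infty$ is a p-geodesic and not merely a rectifiable minimizer with bad parametrization — this is handled by the fact that length-minimizers between their endpoints are automatically p-geodesics, together with fixing constant-speed parametrization from the outset so that the Arzelà–Ascoli limit is already correctly parametrized (and the degenerate case $x=y$, where the geodesic is constant, should be noted separately). Second, there is a boundary issue hiding in the statement "minimizers are p-geodesics": one should invoke the structure theory of \cite{ABB1} recalled above, namely that a locally length-minimizing curve in a manifold with boundary is $C^1$ and is a p-geodesic in the sense defined, so the minimizer produced by Arzelà–Ascoli genuinely lies in $\Gamma^p_{\tilde M}$. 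Once these points are in place, the properness of $(s_M,e_M)$ (preimages of compact sets are compact, again by Arzelà–Ascoli) combined with the continuous bijection property yields that $(s_M,e_M)$ is a homeomorphism, completing the proof.
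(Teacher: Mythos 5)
Your argument is correct and is exactly the route the paper intends: the paper states this proposition without a written proof, calling it an easy consequence of Proposition \ref{minimizing}, and your write-up supplies precisely those details (uniqueness of the minimizing p-geodesic between endpoints in the simply connected cover for injectivity, existence of minimizers for surjectivity, and an Arzel\`a--Ascoli plus uniqueness subsequence argument for continuity of the inverse). Your explicit attention to the constant-speed normalization and the degenerate case $x=y$ is a reasonable clarification of a point the paper leaves implicit.
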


\section{non-convex part of the boundary}
In the rest of the paper, $M$ and $N$ will be two compact smooth Riemannian surfaces with the same scattering data rel $h : \partial M \rightarrow \partial N$
where $h$ is an isometry.
$M$ is assumed to have finitely many switch points, no trapped geodesics (including closed geodesics) and no conjugate points.
$\varphi : \partial \Omega M \rightarrow \partial \Omega N$
is the induced bundle map defined in \eqref{eq:phi}.

We say that $\partial M$ is strictly convex near $p$ if the curvature of $\partial M$ is positive at $p$.
We say that $\partial M$ is strictly concave near $p$ if the curvature of $\partial M$ is negative at $p$.
We say that $\partial M$ is totally geodesic near $p$ if the curvature of $\partial M$ is zero \emph{near} $p$.
Let $S_- = \{p \in \partial M : \text{$\partial M$ is strictly concave near $p$}\}$,
$S_+ = \{p \in \partial M : \text{$\partial M$ is strictly convex near $p$}\}$
and
$S_0 = \{p \in \partial M : \text{$\partial M$ is totally geodesic near $p$}\}$.
Here being strictly concave means that the curvature is negative,
Note that $S_-$, $S_+$ and $S_0$ are all open in $\partial M$
and that $S_s := M \setminus (S_- \bigcup S_+ \bigcup S_0)$ is the set of switch points.

\begin{prop}
  If $p_0 \in \overline{S_-}$,
  then the curvature of $\partial M$ at $p_0$ is the same as the curvature of $\partial N$ at $h(p_0)$.
  \label{prop:concave}
\end{prop}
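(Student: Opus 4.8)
The plan is to reconstruct the geodesic curvature of $\partial M$ at $p_0$ from the behaviour of geodesics near a grazing geodesic through $p_0$, noting that everything used is visible in the scattering data. First reduce to the case $p_0\in S_-$: if $p_0\in\overline{S_-}\setminus S_-$, take $p_n\in S_-$ with $p_n\to p_0$, apply the $S_-$ case, and let $n\to\infty$, using continuity of the two geodesic curvatures and of $h$. So assume $p_0\in S_-$, so that an arc of $\partial M$ about $p_0$ is strictly concave, and let $\gamma_0$ be the geodesic tangent to $\partial M$ at $p_0$; near $p_0$ it lies in the interior of $M$ except for the single contact at $p_0$. Extending $\gamma_0$ backwards from $p_0$, let $q$ be the first point where it meets $\partial M$ (finite, as $M$ has no trapped geodesics) and $X_0\in\partial_+\Omega M$ its inward unit vector there, so the scattering segment $\gamma_{X_0}$ runs from $q$ to $p_0$ and $\alpha_M(X_0)$ is tangent to $\partial M$ at $p_0$. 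In Fermi coordinates $(s,r)$ along $\gamma_0$ near $p_0$, with $\gamma_0=\{r=0\}$ and the sign of $r$ chosen so that $M$ is locally $\{r<f(s)\}$, the boundary is a graph $r=f(s)$ with $f(0)=f'(0)=0$ and $f''(0)=|\kappa|$, where $\kappa:=\kappa_{\partial M}(p_0)$ is negative. Parametrise geodesics near $\gamma_0$ by their position $\rho$ and slope $\beta$ at the slice $\{s=0\}$. A short computation with the Jacobi equation shows that such a geodesic reaches $\partial M$ near $p_0$ exactly when $\epsilon:=\beta^2+2|\kappa|\rho\ge0$, in which case, followed from the $q$ side, its scattering segment ends at the boundary point of arc-coordinate $(\beta-\sqrt\epsilon)/|\kappa|$ with exit angle $\approx\sqrt\epsilon$; when $\epsilon<0$ the geodesic passes $p_0$ staying in $M$ and its segment continues far from $p_0$. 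The separatrix $\epsilon=0$ is the one-parameter family of tangent geodesics $\gamma_0^{(s)}$, $\gamma_0^{(s)}$ being tangent to $\partial M$ at arc-distance $s$ from $p_0$, which in these coordinates is the parabola $s\mapsto(-\tfrac12|\kappa|s^2,\,|\kappa|s)$.

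The crux is a first-order comparison of the breaking segments near $p_0$ with this tangent family. For $\vartheta\ge0$ let $Y_\vartheta\in\partial_+\Omega M$ be the initial vector of the scattering segment that ends at $p_0$ itself with exit angle $\vartheta$ (so $Y_0=X_0$), and let $\tau_M(s)\in\partial_+\Omega M$ be the initial vector of $\gamma_0^{(s)}$. In the $(\rho,\beta)$-parametrisation, $\gamma_{Y_\vartheta}$ has $(\rho,\beta)=(0,\vartheta)$, since it passes through $p_0$, while $\gamma_0^{(\vartheta/|\kappa|)}$ has $(\rho,\beta)=(-\tfrac12\vartheta^2/|\kappa|,\,\vartheta)$; the two agree modulo $O(\vartheta^2)$. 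Since the geodesics near $\gamma_0$ are parametrised smoothly, and with smooth transition, both by $X\in\partial_+\Omega M$ near $X_0$ and by $(\rho,\beta)$, this yields
\[
Y_\vartheta=\tau_M\!\big(\vartheta/|\kappa_{\partial M}(p_0)|\big)+O(\vartheta^2),
\]
and $\tau_M$ is an immersion near $0$ (the parabola above has nonzero velocity).

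Now transfer this to $N$. Set $X_0'=\varphi(X_0)$ and $\delta_0=\gamma_{X_0'}$ in $N$. Since $\varphi\circ\alpha_M=\alpha_N\circ\varphi$ and $\varphi$ carries $\partial_0\Omega M$ onto $\partial_0\Omega N$ over $h$, the segment $\delta_0$ ends tangentially to $\partial N$ at $h(p_0)$; and because the scattering data of $N$ must display near $X_0'$ the same partition into geodesics ending near the contact point and geodesics crossing it, $\partial N$ is strictly concave near $h(p_0)$ too. Each ingredient of the displayed identity is scattering-visible and carried along by $\varphi$ and $h$: the distinction ``ends near the contact point'' versus ``crosses it'', the exit point (via $h$) and exit angle, and the arc-length parametrisation of the tangent family (using that $h$ is an isometry and that $p_0$ and $h(p_0)$ correspond). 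Hence the same identity holds for $N$, with $\kappa_{\partial N}(h(p_0))$ in place of $\kappa_{\partial M}(p_0)$, while the curve $\vartheta\mapsto Y_\vartheta$ and $\tau_M$ correspond, under $\varphi$ and $h$, to their counterparts for $N$. Subtracting the two identities and differentiating at $\vartheta=0$, and using $\tau_M'(0)\neq0$, gives $|\kappa_{\partial M}(p_0)|=|\kappa_{\partial N}(h(p_0))|$; as both curvatures are negative, $\kappa_{\partial M}(p_0)=\kappa_{\partial N}(h(p_0))$.

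The step I expect to be the main obstacle is exactly this transfer. The naive scattering invariant---the square-root rate at which the endpoint of a grazing geodesic moves as the geodesic approaches tangency---cannot be used directly, since it also encodes the geodesic flow of $M$ through its interior, which is precisely the data allowed to differ between $M$ and $N$. Comparing the breaking segments instead against the tangent family $\gamma_0^{(s)}$, which the scattering data does match between $M$ and $N$, is what isolates the local quantity $\kappa_{\partial M}(p_0)$. Finally one must dispose of non-generic configurations---for instance $\gamma_0$ being tangent to $\partial M$ at $q$ as well, or passing through a switch point---which can be handled by perturbing $p_0$ within $S_-$ or by a further limiting argument.
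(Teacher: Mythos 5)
Your route is genuinely different from the paper's: the paper perturbs $p_0$ to a nearby $p\in S_-$ admitting a grazing geodesic with transverse endpoints and no other boundary contacts, then simply cites the local result of \cite{UW} that the scattering data determine the $C^\infty$ jet of the metric (hence the boundary curvature) near such a point, and finishes by smoothness; you instead re-derive the needed piece of that jet determination by hand, via the glancing asymptotics (exit point and exit angle scaling like $\sqrt{\epsilon}$, and the comparison of the segments ending at $p_0$ with the tangent family $\tau_M$). The $M$-side computation and the identity $Y_\vartheta=\tau_M(\vartheta/|\kappa_{\partial M}(p_0)|)+O(\vartheta^2)$ are fine, and the scattering-visibility of $Y_\vartheta$ (endpoint $h(p_0)$, exit angle $\vartheta$, since $\varphi$ preserves angles with the boundary) and of the tangent family ($\alpha_M$-image in $\partial_0\Omega M$, tangency point transported by the boundary isometry $h$) is correctly identified.

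The gap is in the transfer step you yourself flag. To write ``the same identity holds for $N$ with $\kappa_{\partial N}(h(p_0))$'' you must already know that $\kappa_{\partial N}(h(p_0))\neq 0$, i.e.\ that $\partial N$ is strictly concave at $h(p_0)$; your justification (``the scattering data display the same partition into geodesics ending near the contact point and geodesics crossing it'') only rules out strict convexity at $h(p_0)$ and a totally geodesic boundary arc there. It does not exclude a degenerate tangency, say $\partial N$ given near $h(p_0)$ in Fermi coordinates along $\delta_0$ by $r=f_N(s)$ with $f_N(s)\sim s^4$: then geodesics still split into those exiting near the contact point and those passing by, but the tangency-point versus exit-angle rate is no longer linear, and the $N$-side identity with slope $1/|\kappa_{\partial N}(h(p_0))|$ is meaningless, so the final differentiation collapses. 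The repair is easy and keeps your method self-contained: since every boundary point of $N$ near $h(p_0)$ receives a tangential arrival from the interior (the $\varphi$-image of the tangent family, with tangency point prescribed by $h$), one gets $\kappa_{\partial N}\le 0$ there, so $f_N$ is convex with $f_N(0)=f_N'(0)=0$; transporting only the $M$-side identity and matching the slope component in the $N$-chart gives $f_N'(\vartheta/|\kappa_{\partial M}(p_0)|)=\vartheta+O(\vartheta^2)$, hence $f_N''(0)=|\kappa_{\partial M}(p_0)|$, which proves strict concavity and the curvature equality in one stroke. (Alternatively, one can do as the paper does and quote \cite{UW}.) You should also make the ``clean configuration'' reduction explicit—tangency or switch points at $q$, or extra boundary contacts of $\gamma_0$, must be removed by moving $p_0$ within $S_-$ before the asymptotics apply—exactly the perturbation the paper performs at the start of its proof.
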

\begin{proof}
  If $p_0 \in S_-$,
  there is a $p \in S_-$ near $p_0$ such that there is a geodesic in $M$ which is tangent to $\partial M$ at $p$, which intersects $\partial M$ transversely at the two end points, and which have no other intersections with $\partial M$.
  Since $M$ and $N$ have the same scattering data, the same thing happens to $h(p)$.
  Hence the $C^\infty$ jet of the metric near $h(p)$ is determined \cite{UW}
  by the scattering data.
  In particular,
  the curvature of $S_-$ at $p$ is the same as the curvature of $h(S_-)$ at $h(p)$.
  Since $M$ and $N$ are assume to be smooth,
  the curvature of $S_-$ at $p_0$ is the same as the curvature of $h(S_-)$ at $h(p_0)$.
\end{proof}

\begin{prop}
  If $p_0 \in S_0$,
  then $\partial N$ is totally geodesic at $h(p_0)$.
  \label{prop:totally_geodesic}
\end{prop}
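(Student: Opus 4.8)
The plan is to reduce to the pointwise statement that the geodesic curvature of $\partial N$ vanishes at $h(p)$ for every $p\in S_0$; since $S_0$ is open, applying this at every point of a neighbourhood of $p_0$ in $S_0$ yields that $\partial N$ is totally geodesic at $h(p_0)$. Fix $p\in S_0$ and let $\hat I$ be the connected component of $S_0$ containing $p$, an open arc of $\partial M$ along which $\partial M$ is totally geodesic, hence a geodesic of $M$ once parametrized by arc length. Because $M$ has no closed geodesics, $\hat I$ is not an entire boundary component, so it has two endpoints; let $G$ denote the maximal geodesic prolongation of $\hat I$ inside $M$, which by the absence of trapped geodesics is defined on a compact interval with endpoints on $\partial M$. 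Note $G$ cannot return to $\hat I$ (there it would be tangent to $\partial M$, hence coincide with $\hat I$, forcing $G$ periodic), so the endpoints of $G$ lie off $\hat I$. For $q$ in the open arc $\hat I$ and small $t>0$ let $X_t(q)\in\partial_+ \Omega_q M$ make angle $t$ with the unit tangent to $\hat I$ at $q$ in a fixed (leftward) direction, so that $X_t(q)\to X_0(q)\in\partial_0 \Omega_q M$ as $t\to0^+$; since $\partial M$ coincides near $q$ with the geodesic $\hat I$, the geodesic $\gamma_{X_0(q)}$ is the portion of $G$ issuing from $q$ in the leftward direction. The key claim is that $\alpha_M(X_t(q))\to V$ as $t\to0^+$, where $V$ is the terminal velocity of $G$ at its left-hand endpoint $\zeta$, and that $V$ and $\zeta$ are independent of $q\in\hat I$ (all the rays $\gamma_{X_0(q)}$ lie on the single geodesic $G$ and run to its same end) with $\zeta\notin\hat I$.

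Granting this, continuity of $\varphi$ together with $\varphi\circ\alpha_M=\alpha_N\circ\varphi$ gives $\alpha_N(\varphi(X_t(q)))\to\varphi(V)$ as $t\to0^+$, a vector based at $h(\zeta)$ and independent of $q\in\hat I$. Suppose, for contradiction, the curvature of $\partial N$ at $h(p)$ is nonzero. If it is positive, $\partial N$ is strictly convex on $h(J)$ for some subarc $J\ni p$ of $\hat I$, and for $q\in J$ the geodesic $\gamma_{\varphi(X_t(q))}$ enters $N$ at the strictly convex point $h(q)$ at angle $t$, hence leaves a distance $O(t)$ away with exit velocity tending to the tangent vector $\varphi(X_0(q))$ at $h(q)$; thus $\varphi(V)=\varphi(X_0(q))$ and $\zeta=q$, impossible as $\zeta\notin\hat I$. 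If it is negative, $\partial N$ is strictly concave on $h(J)$ for some subarc $J\ni p$ of $\hat I$, and for $q\in J$ the geodesic $\delta_q:=\gamma_{\varphi(X_0(q))}$ through $h(q)$ tangent to $\partial N$ runs into the interior of $N$; then $\gamma_{\varphi(X_t(q))}$ converges to the portion of $\delta_q$ entering the interior at $h(q)$, so $\alpha_N(\varphi(X_t(q)))$ tends to the terminal data of that portion of $\delta_q$ — which, being $\varphi(V)$ based at $h(\zeta)$, is the same for every $q\in J$. Hence all the $\delta_q$ pass through $h(\zeta)$ with the same velocity up to sign and so share one image $\delta$; a single geodesic $\delta$ of $N$ is then tangent to $\partial N$ at $h(q)$ for every $q\in J$, forcing $h(J)\subseteq\delta$, i.e.\ $\partial N$ totally geodesic along $h(J)$, contradicting strict concavity at $h(p)$. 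Either way we reach a contradiction, so the curvature of $\partial N$ at $h(p)$ vanishes.

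The main obstacle is the convergence claim $\alpha_M(X_t(q))\to V$, and its counterpart identifying $\lim_{t\to0^+}\alpha_N(\varphi(X_t(q)))$ with the terminal data of the interior portion of $\delta_q$: both say that the exit point and exit velocity of the approximating family converge to those of the limiting grazing geodesic. This is exactly where the hypotheses on $M$ are used — no conjugate points makes the Jacobi field along $G$ generated by the family vanish only at $q$, so $\gamma_{X_t(q)}$ stays at distance of order $t$ from $G$ and neither re-meets $\hat I$ nor leaves $M$ prematurely through it; no trapped geodesics makes $G$ and each $\gamma_{X_t(q)}$ of finite length terminating on $\partial M$; and $F_M$ finite controls $\partial M$ at the ends of $\hat I$. (Proposition \ref{producthomeo} gives continuous dependence of p-geodesics on their endpoints and can be used to organize this.) The genuinely delicate point is when $G$ or $\delta_q$ is tangent to the boundary at an interior point of its run: the family may then exit near that grazing point rather than near $\zeta$, but the grazing point depends only on $G$ (respectively only on $\delta_q$), so the sole feature the argument uses — independence of the limit on $q$ — still holds.
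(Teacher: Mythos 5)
Your argument is essentially the paper's: you approximate the tangential direction by the transversal family $X_t(q)$, use no conjugate points, no trapped geodesics and finiteness of $F_M$ to show that the exit data $\alpha_M(X_t(q))$ converge to a limit independent of $q\in\hat I$ and based off $\hat I$, push this through $\varphi\circ\alpha_M=\alpha_N\circ\varphi$, and conclude that $h$ of the boundary arc must lie on a single geodesic of $N$ (your concave case) while strict convexity at $h(q)$ is excluded because the limit exit data cannot be the tangential data at $h(q)$ --- which is exactly the paper's route, your convex/concave dichotomy being a repackaging of its final step of placing every $h(p)$ on the one limit geodesic $\gamma_1$. The delicate convergence claims you flag (including identifying $\lim_{t\to 0^+}\alpha_N(\varphi(X_t(q)))$ with terminal data of a sub-segment of $\delta_q$, which tacitly needs the $N$-side exit times to stay bounded even though $N$ carries none of the standing hypotheses) are handled at the same level of detail in the paper's own proof, so the proposal is correct at the paper's standard of rigor.
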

\begin{proof}
  Let $U$ be an open neighborhood of $p_0$ in $S_0$.
  Pick a unit tangent vector $X_0 \in \partial_0 \Omega_{p_0} M$.
  There are two choices of $X_0$ but either will work.

  For any $\theta \in (0, \pi)$, let $X_\theta$ be the unit tangent vector in $\partial_+ \Omega_{p_0} X$ such that the angle between $X_0$ and $X_\theta$ is $\theta$.
  We shall show that
  there is a $\delta > 0$ such that
  $\gamma_{X_\theta}$ is not tangent to $\partial M$ when $\theta \in (0, \delta)$.
  Suppose that this is not true,
  then there is a monotonically decreasing sequence $\theta_i \rightarrow 0$ such that
  $\gamma_{X_{\theta_i}}$ is tangent to $\partial M$.
  Since $M$ has no trapped geodesics,
  the lengths of geodesics in $M$ is bounded from above universally.
  Hence there is a subsequence $\theta_{i_k}$ of $\theta_i$ such that
  $\gamma_{X_{\theta_{i_k}}}$ is tangent to $\partial M$ at $q_k = \gamma_{X_{\theta_{i_k}}}(s_k)$
  where $s_k$ converges.
  Now, lift $p_0$ to a point $\tilde{p}_0$ in $\tilde M$, the universal cover of $M$.
  Then lift each $X_\theta$ to a unit tangent vector $\tilde{X}_\theta \in \Omega_{\tilde{p}_0} \tilde{M}$.
  Let $\tilde{q}_k = \gamma_{\tilde{X}_{\theta_{i_k}}}(s_k)$,
  then $\tilde{q}_k$ converges to some $\tilde{q} \in \partial \tilde{M}$.
  So there is $N \in \NN$ such that $\tilde{q}_k$ and $\tilde{q}$ are on the same component of $\partial \tilde{M}$ when $k > N$.
  It follows that there is switch point between $\tilde{q}_k$ and $\tilde{q}_{k+1}$ when $k > N$,
  which contradicts our assumption that $F_M$ is finite.

  Define $\gamma_\theta : [0, 1] \rightarrow M$ as
  $\gamma_\theta(t) = \gamma_{X_\theta}(\ell(\gamma_{X_\theta}) t)$.
  We will define $\gamma_0$ as the limit of $\gamma_\theta$ as $\theta \rightarrow 0$,
  if the limit exist.
  By Proposition \ref{prop:no_conj}, $\gamma_\theta$ converges as $\theta \rightarrow 0$ if
  $\gamma_\theta(1)$ converges.
  Let $S_1$ be the component of $\partial M$ that contains $\gamma_{\frac{\delta}{2}}(1)$.
  The no conjugate points condition says that $\gamma_\theta(1)$ moves in a fixed direction on $S_1$ as $\theta$ goes to $0$.
  Assume that $\gamma_\theta(1)$ does not converge as $\theta \rightarrow 0$,
  then $\gamma_{\theta}(1)$ must go around $S_1$ infinitely often.
  Pick any $q \in S_1$,
  there a sequence of positive numbers $\theta_i \rightarrow 0$ such that
  $\gamma_{\theta_i}(1) = q$.
  Substituting $\theta_i$ by a subsequence if necessary,
  we may assume that
  $\gamma_{\theta_i}'(1) / \tau(X_{\theta_i})$ converges to a unit tangent vector $Y$.
  Then $\gamma_{-Y}$ is tangent to $\partial M$ at $p_0$.
  Therefore, there is geodesic in $M$ which goes through $q$ and which is tangent to $\partial M$ at $p_0$.
  However, this is impossible since $q$ is an arbitrary point on $S_1$.
  Hence $\gamma_\theta(1)$ converges as $\theta \rightarrow 0$.
  Therefore, $\gamma_\theta$ converges to $\gamma_0$ as $\theta \rightarrow 0$.

  Since $M$ and $N$ have the same scattering data,
  $\gamma_{\varphi(X_\theta)}$ also converges to a geodesic ray $\gamma_1$ whose initial tangent vector is $\varphi(X_0)$.
  Since $p_0$ is in the interior of $S_0$,
  $\ell(\gamma_0) > 0$.
  Since $M$ has no closed geodesics,
  $X_0 \neq \gamma_0'(1) / \ell(\gamma_0)$.
  Hence $\varphi(X_0) \neq \lim_{\theta \rightarrow 0} \varphi(\alpha(X_\theta))$,
  which implies that $\ell(\gamma_1) > 0$.
  We have $\gamma_1'(1) / \ell(\gamma_1) = \lim_{\theta \rightarrow 0} \varphi(\alpha(X_\theta))
  = \varphi(\lim_{\theta \rightarrow 0} \alpha(X_\theta)) = \varphi(\gamma_0'(1) / \ell(\gamma_0))$.
  Since $\partial M$ is totally geodesic near $p_0$,
  there is $\delta > 0$ such that
  $\gamma_0|_{[0, \delta]} \subset \partial M$.
  For any $p \in \gamma_0((0, \delta))$,
  since $\partial M$ is totally geodesic near $p_0$,
  there is $\delta > 0$ such that
  $\gamma_0|_{[0, \delta]} \subset \partial M$.
  Using the same argument as above,
  we can show that there is
  a geodesic $\gamma_3$ in $N$ starting at $h(p)$ such that
  $\gamma_3'(1) / \ell(\gamma_3) = \varphi(\gamma_0'(1) / \ell(\gamma_0))$,
  which implies that $h(p)$ is also on $\gamma_2$.
  Hence $h(\gamma_0([0, \delta]))$ is totally geodesic in $N$.

  There is geodesic ray $\gamma_4$ in $M$ whose initial tangent vector is $-X_0$.
  Using the same argument as above,
  there is a small $\delta' > 0$ such that $\gamma_4([0, \delta]) \subset \partial N$ and that $h(\gamma_4([0, \delta]))$ is totally geodesic.
  Therefore, $\partial N$ is totally geodesic near $h(p_0)$.

\end{proof}

\section{Space of geodesics, continued}
Let $\Gamma'_M$ be the space of maximal geodesics which are tangent to $\partial M$ at a switch point.
We may assume that geodesics in $\Gamma'M$ are not tangent to $S_-$.
If any geodesic in $\Gamma'M$ is tangent to $S_-$,
we may extend $M$ near that tangent point in a collar neighborhood to reduce the number of such tangent points without introducing new switch points.
Since there are only finite many switch points,
all such tangent points can be eliminated by extending $M$ to a new manifold $M'$.
If the extension is small enough, the $M'$ will also have no trapped geodesics and no conjugate points.
Now, do the same extension for $N$.
Namely, glue $N$ and $M' \setminus M$ to obtain an extension $N'$ of $N$.
By proposition \ref{prop:concave},
$N'$ is $C^2$ (actually $C^\infty$).
Since $M$ and $N$ have the same scattering data,
$M'$ and $N'$ also have the same scattering data.
If $M'$ and $N'$ have the same lens data,
then $M$ and $N$ also have the same lens data.
Hence it suffices to prove that
$M'$ and $N'$ have the same lens data.
Thus, without loss of generality,
we assume that geodesics in $\Gamma'_M$ are not tangent to $S_-$.

\subsection{A map between space of geodesics}
Recall that $\Gamma^p_M$ is the space of p-geodesics $[0, 1] \rightarrow M$ whose end points are on $\partial M$, and we define $\Gamma^p_N$ similarly.
For our convenience, any reparametrization of a p-geodesic $\gamma : [0, 1] \rightarrow M$ whose end points are on $\partial M$ will also be viewed as $\gamma \in \Gamma^p_M$.
Let $\Gamma^0_M := \{\gamma \in \Gamma^p_M : \text{$\gamma$ is non-constant and not tangent to $\partial M$ at points in $\partial S_+$.}\}$.

\begin{prop}
  There is a map $\Phi : \Gamma^0_M \rightarrow \Gamma^p_N$
   which satisfies the following conditions.
  \begin{enumerate}
    \item $\Phi$ is continuous with respect to the compact open topology.
    \item For any $\gamma(t) \in \partial M$, reparametrizing $\Phi(\gamma)$ if necessary, we have $\Phi(\gamma)(t) = h(\gamma(t))$
      and $\frac{\Phi(\gamma)'(t)}{|\Phi(\gamma)'(t)|} = \varphi(\frac{\gamma'(t)}{|\gamma'(t)|})$.
  \end{enumerate}
\end{prop}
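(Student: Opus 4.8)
The plan is to construct $\Phi$ piecewise, along the canonical decomposition of a p-geodesic into interior geodesic segments and boundary segments, sending boundary segments into $N$ by $h$ and interior segments by the scattering relation, and then to check that the resulting pieces assemble into a genuine p-geodesic of $N$ that varies continuously with $\gamma$. Since $F_M$ is finite there are no intermittent points, so every $\gamma\in\Gamma^0_M$ is a finite concatenation of maximal interior geodesic segments and maximal boundary segments, with all junction points and the two endpoints on $\partial M$. On a boundary segment $\gamma|_{[t_{i-1},t_i]}\subset\partial M$ set $\Phi(\gamma)(t)=h(\gamma(t))$; a boundary segment of a p-geodesic lies in $\overline{S_-}\cup S_0$, so Propositions \ref{prop:concave} and \ref{prop:totally_geodesic} show $\partial N$ has non-positive geodesic curvature along its $h$-image, making this a legitimate boundary segment of a p-geodesic of $N$ of the correct length and, since $h$ is an isometry of the boundaries, the correct direction --- so condition (2) holds there.

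On a maximal interior geodesic segment $\sigma$, extend $\sigma$ to the maximal geodesic of $M$ through it, with initial vector $X\in\partial_+\Omega M$ (of finite length, since $M$ has no trapped geodesics), and decree that $\Phi$ sends $\sigma$ to the corresponding subarc of $\gamma_{\varphi(X)}\subset N$; the identity $\alpha_N\circ\varphi=\varphi\circ\alpha_M$ already matches transverse endpoints and their directions. Identifying the subarc correctly, and matching any interior point where $\gamma$ meets $\partial M$, comes down to the following claim: \emph{if a maximal geodesic $\gamma_X$ of $M$ that does not run along $\partial M$ is tangent to $\partial M$ at a point $p$ where $\partial M$ is concave or totally geodesic on both sides, then $\gamma_{\varphi(X)}$ is tangent to $\partial N$ at $h(p)$ with tangent direction $\varphi(\gamma_X'/|\gamma_X'|)$.} Every relevant tangency point of an interior segment of $\gamma\in\Gamma^0_M$ is of this type: tangency at a point of $\partial S_+$ is excluded by the definition of $\Gamma^0_M$; tangency at a point where $\partial M$ is totally geodesic on the relevant side forces $\gamma_X$ onto $\partial M$ by uniqueness of geodesics; so the tangency point always lies in $\overline{S_-}$.

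To prove the claim I would use the approximation underlying Proposition \ref{prop:concave}: choose $q_j\in S_-$ with $q_j\to p$ and let $\sigma_j$ be the geodesics of $M$ tangent to $\partial M$ at $q_j$ that are transverse to $\partial M$ at their two endpoints, with tangent direction at $q_j$ close to that of $\gamma_X$ at $p$ (these exist since $p\in\overline{S_-}$, and we arranged in the previous section that geodesics in $\Gamma'_M$ are not tangent to $S_-$, which is consistent with the tangency of $\gamma_X$ at $p$). As in Proposition \ref{prop:concave}, the geodesics $\gamma_{\varphi(X_j)}$ of $N$ are then tangent to $\partial N$ at $h(q_j)$. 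Letting $j\to\infty$, using continuity of the scattering relation at transverse exits to get $\gamma_{\varphi(X_j)}\to\gamma_{\varphi(X)}$ together with the closedness of the tangency condition, the limiting geodesic $\gamma_{\varphi(X)}$ is tangent to $\partial N$ at $h(p)$ with the stated direction.

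Granting the claim, at each junction point the interior-segment image arrives in $N$ tangent to $\partial N$ in the $\varphi$-image of the adjacent boundary segment's direction, so $\Phi(\gamma)$ is $C^1$; being a $C^1$ concatenation of interior geodesic arcs and boundary arcs along the concave/totally-geodesic part of $\partial N$, it is locally length-minimizing, hence lies in $\Gamma^p_N$, and condition (2) holds by construction on boundary arcs and by the claim at interior times where $\gamma$ meets $\partial M$. For continuity (1), the whole construction is assembled from $h$, the scattering relation $\alpha_N\circ\varphi=\varphi\circ\alpha_M$, and the exit-time function of $M$, each continuous on the configurations in play; the configurations that would break continuity --- a geodesic tangent to $\partial M$ at a point of $\partial S_+$, near which geodesics split into families that do and do not meet $\partial M$ --- are precisely those removed from $\Gamma^0_M$, so a limiting argument comparing $\Phi(\gamma_n)$ with $\Phi(\gamma)$ segment by segment gives continuity in the compact--open topology. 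I expect the main obstacle to be the key claim, specifically the passage $q_j\to p$: one must ensure the $N$-geodesics converge rather than merely subconverge and that no tangency is lost in the limit, which is delicate because $N$ is not assumed to have no conjugate points, so continuity of its geodesic flow has to be extracted from transversality of exits and the scattering identity rather than assumed.
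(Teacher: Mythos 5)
Your overall architecture is the same as the paper's: decompose $\gamma\in\Gamma^0_M$ (possible since $F_M$ is finite) into boundary arcs and interior geodesic arcs, send boundary arcs through $h$ (justified by Propositions \ref{prop:concave} and \ref{prop:totally_geodesic}), send interior arcs through the scattering relation, and concatenate. Where you diverge is in how the interior arcs are handled, and that is where there is a genuine gap. You propose to extend each interior arc to the \emph{maximal} geodesic $\gamma_X$ of $M$, assume $X\in\partial_+\Omega M$ and that $\gamma_X$ never runs along $\partial M$, and then transfer tangency points by your key claim. But the maximal extension of an interior arc of $\gamma$ can fail these hypotheses even though $\gamma\in\Gamma^0_M$: beyond the arc, the extension may become tangent to $\partial M$ at a switch point, may run along a totally geodesic stretch of $\partial M$, or may terminate tangentially at a point of $\partial S_+$ (so that $X\in\partial_0\Omega M$ and $\alpha_M$, $\tau$ are not even defined at $X$). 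The condition defining $\Gamma^0_M$ constrains $\gamma$ itself, not the maximal extensions of its pieces. These are exactly the configurations the paper deliberately avoids at this stage: it defines $\Phi$ on $\Gamma^2_M=\overline{\Gamma^1_M}\setminus(\Gamma^c_M\cup\Gamma'_M)$ by taking limits of images of nowhere-tangent geodesics, cutting $\gamma$ into the pieces themselves rather than their maximal extensions, precisely because for constants and for geodesics in $\Gamma'_M$ the two one-sided limits can disagree (a closed geodesic in $N$ tangent to $\partial N$), and this pathology is only excluded in the later section where $L=0$ is proved. Your construction, as written, either does not cover these junctions or silently uses what is established only afterwards.

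The second gap is the key claim itself, which you acknowledge but do not close. You justify the approximation step by saying that, ``as in Proposition \ref{prop:concave}'', the geodesic $\gamma_{\varphi(X_j)}$ is tangent to $\partial N$ at $h(q_j)$; but Proposition \ref{prop:concave} only asserts equality of boundary curvatures at points of $\overline{S_-}$, and even its proof only produces \emph{some} geodesic of $N$ tangent at $h(q_j)$ via the jet determination of \cite{UW}. That the specific geodesic $\gamma_{\varphi(X_j)}$ passes through $h(q_j)$ tangentially at the corresponding interior time is an additional statement, which needs a one-sided limit argument with the scattering relation (nearby geodesics that dip past $q_j$ exit near $q_j$, hence their images exit near $h(q_j)$, etc.); and the subsequent passage $q_j\to p$ requires convergence, not merely subconvergence, with no loss of tangency, in a manifold $N$ about which nothing (no conjugate points, no trapping) is assumed. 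The paper's limit-based definition of $\Phi$ on $\Gamma^2_M$ packages exactly this information: the image of a grazing geodesic is \emph{defined} as a limit of images of transversal geodesics, so it is automatically a p-geodesic of $N$ with the correct boundary data, and no interior tangency-matching statement has to be proved separately. To repair your argument you would either have to prove the tangency-matching claim (including the cases where the extension meets switch points or $\partial S_+$), or abandon maximal extensions and work with the pieces and their transversal approximations as the paper does.
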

\begin{rem}
  We will extend $\Phi$ to $\Gamma^p_M$ in the next section.
\end{rem}

\begin{proof}
  Let $\Gamma^1_M$ be the space of non-constant geodesics $[0, 1] \rightarrow M$ whose end points are on $\partial M$ and which are not tangent to $\partial M$ anywhere.

  Let $\Gamma^c_M$ be the space of constant geodesics on $\partial M$,
  and define $\Gamma^2_M := \overline{\Gamma_M^1} \setminus (\Gamma^c_M \bigcup \Gamma'_M)$.

  Let $\Gamma^b_M$ be the space of non-constant p-geodesics in $M$ which run along $\overline{S_- \bigcup S_0} \setminus \partial S_+$, the non-convex part of $\partial M$.

  Defined $\Phi : \Gamma^1_M \rightarrow \Gamma^p_N$ as
  $\Phi(\gamma_X) = \gamma_{\varphi(X)}$.
  $\Phi$ is obviously continuous.
  Next, extend $\Phi$ to a continuous map from $\Gamma^2_M$ to $\Gamma^p_N$ by taking limits.
  If a geodesic $\gamma = \lim_{i \rightarrow \infty} \gamma_i$ is in $\Gamma^2_M$ where $\gamma_i \in \Gamma^1_M$,
  then define $\Phi(\gamma) := \lim_{i \rightarrow \infty} \Phi(\gamma_i)$.
  This is well-defined because $M$ and $N$ have the same scattering data.

  Note that
  we exclude constant curves and geodesics tangent to switch points when defining $\Gamma^2_M$
  because
  $\lim_{i \rightarrow \infty} \Phi(\gamma_i)$
  would not necessarily converge if $\gamma$ were a constant curve or a geodesic in $\Gamma'_M$.
  For example,
  let $\gamma$ be a constant curve and $\gamma^1_i$ a sequence of geodesics in $\Gamma^1_M$ such that
  $\gamma^1_i(0) = \gamma(0)$ and that
  $\gamma^1_i(1)$ converges to $\gamma(0)$ from one side.
  Then $\lim_{i \rightarrow \infty} \Phi(\gamma^1_i)$
  might be a closed geodesic in $N$ tangent $\partial M$ at $h(\gamma(0))$.
  Let $\gamma^2_i$ a sequence of geodesics in $\Gamma^1_M$ such that
  $\gamma^2_i(0) = \gamma(0)$ and that
  $\gamma^2_i(1)$ converges to $\gamma(0)$ from the other side.
  Then $\lim_{i \rightarrow \infty} \Phi(\gamma^2_i)$ will be a closed geodesic
  which goes in the other direction.
  We will resolve this issue by showing that there are no closed geodesics in $N$ tangent to $\partial N$ in the next section.

  For each $\gamma \in \Gamma^b_M$,
  $h \circ \gamma$ is also a p-geodesic
  by Proposition \ref{prop:concave} and Proposition \ref{prop:totally_geodesic}.
  Hence we may define $\Phi(\gamma) := h \circ \gamma$.

  Now, pick any $\gamma \in \Gamma^0_M$.
  Since $F_M$ is finite, we have a decomposition
  $\gamma = \gamma_1 * \gamma_2 * \dots * \gamma_n$
  where each $\gamma_k$ is either in $\Gamma^2_M$ or in $\partial M$.
  Define $\Phi(\gamma) = \Phi(\gamma_1) * \Phi(\gamma_2) * \dots * \Phi(\gamma_n)$.
\end{proof}

Define $e : \Gamma^0_M \rightarrow \real$ as
$e(\gamma) = \ell(\Phi(\gamma)) - \ell(\gamma)$.
The two manifolds will have the same length data if $e = 0$.
\begin{prop}
  $e$ is constant on each component of $\Gamma^0_M$
  \label{prop:constante}
\end{prop}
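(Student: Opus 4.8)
The plan is to prove that $e$ is \emph{locally} constant on $\Gamma^0_M$, from which being constant on each connected component is immediate. The underlying mechanism is the first variation of arc length, applied to $\gamma$ and to $\Phi(\gamma)$ at the same time; this is exactly the principle behind the remark in \cite{Mi} that lengths are recovered from scattering data up to a constant, and the only genuinely new point is that the argument must be run for p-geodesics (which are only $C^1$, and have boundary segments and switch points) rather than for honest geodesics.

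Fix $\gamma_0\in\Gamma^0_M$ with its decomposition $\gamma_0=\gamma_1*\cdots*\gamma_n$ into interior geodesic segments and boundary segments (finite, since $F_M$ is finite). Because $\gamma_0$ is not tangent to $\partial M$ at any point of $\partial S_+$ and $F_M$ is finite, this combinatorial type is stable under small perturbations, and by Proposition \ref{producthomeo} nearby p-geodesics are parametrized by their endpoints; hence a neighborhood of $\gamma_0$ in $\Gamma^0_M$ is an open subset of a space that locally looks like a product of arcs of $\partial M$, so it is locally path-connected, and $\ell$ (hence $e$, since $\Phi$ is continuous) is continuous on it. Consequently any $\gamma$ in the component of $\gamma_0$ can be joined to $\gamma_0$ by a path $s\mapsto\gamma_s$ in $\Gamma^0_M$ which is piecewise smooth and along each smooth piece of which the combinatorial type of $\gamma_s$ is constant; it therefore suffices to show $\tfrac{d}{ds}\ell(\Phi(\gamma_s))=\tfrac{d}{ds}\ell(\gamma_s)$ on each such subinterval.

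On such a subinterval let $W(t)=\partial_s\gamma_s(t)$, parametrize each piece of $\gamma_s$ proportionally to arclength, and sum the first variation formula over the pieces. Each interior integrand $\langle W,\nabla_{T}T\rangle$ vanishes: on an interior geodesic piece $\nabla_TT=0$, while on a boundary piece $\gamma_s(t)\in\partial M$ forces $W(t)\in T\partial M$, which is orthogonal to $\nabla_TT=\kappa\,\nu_M$. At each interior junction the two boundary contributions cancel because a p-geodesic is $C^1$ there, so the telescoping sum collapses and, writing $T_s(i)$ for the unit tangent of $\gamma_s$ at $\gamma_s(i)$,
\begin{equation*}
\frac{d}{ds}\ell(\gamma_s)=\langle W(1),T_s(1)\rangle_{g_M}-\langle W(0),T_s(0)\rangle_{g_M}.
\end{equation*}
The identical computation applies to the $C^1$ concatenation $\Phi(\gamma_s)$ in $N$: its interior pieces are geodesics, its boundary pieces lie on the totally geodesic locus of $\partial N$ by Propositions \ref{prop:concave} and \ref{prop:totally_geodesic} (so again $\langle W^\Phi,\nabla_TT\rangle\equiv0$), and it is $C^1$ at junctions, so by part (2) of the preceding proposition $\tfrac{d}{ds}\ell(\Phi(\gamma_s))=\langle h_*W(1),\varphi(T_s(1))\rangle_{g_N}-\langle h_*W(0),\varphi(T_s(0))\rangle_{g_N}$. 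Finally, for $p\in\partial M$, $v\in T_p\partial M$ and $X=aY+b\,\nu_M(p)\in\Omega_pM$ with $Y\in T_p\partial M$ unit, definition \eqref{eq:phi} gives $\varphi(X)=a\,h_*Y+b\,\nu_N(h(p))$, and since $h$ is an isometry and $h_*v\perp\nu_N(h(p))$ we get $\langle\varphi(X),h_*v\rangle_{g_N}=a\langle Y,v\rangle_{g_M}=\langle X,v\rangle_{g_M}$; applying this at both endpoints with $v=W(0),W(1)$ shows the two first variations agree, so $\tfrac{d}{ds}e(\gamma_s)=0$.

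The step I expect to be the main obstacle is not the variational computation but the bookkeeping around p-geodesics: verifying that the combinatorial type of an element of $\Gamma^0_M$ is persistent enough to choose the connecting path $\gamma_s$ with constant combinatorial type and a genuinely smooth variation field $W$ (so that boundary segments and switch points vary smoothly, using finiteness of $F_M$ and the exclusion of tangencies at $\partial S_+$), and checking that $\Phi(\gamma_s)$ is always a $C^1$ concatenation of geodesic and boundary-geodesic pieces whose boundary pieces really do sit on the totally geodesic part of $\partial N$, so that the first variation formula applies to it without correction terms.
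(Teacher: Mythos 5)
Your argument is correct and takes essentially the same route as the paper, whose proof is just a one-line appeal to the fact that the first variation of arc length of the corresponding families in $M$ and $N$ depends only on the scattering data; you have simply carried out that computation in detail for p-geodesics. (One minor slip: the boundary pieces of $\Phi(\gamma_s)$ need not lie on the totally geodesic part of $\partial N$ --- they may lie on concave parts --- but the same tangency argument you used in $M$, namely $\langle W^{\Phi},\nabla_T T\rangle=0$ because the variation field is tangent to $\partial N$, covers that case, so nothing breaks.)
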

\begin{proof}
  This follows from the assumption that $M$ and $N$ have the same scattering data. Since the first variation of arc length of geodesics in $\Gamma^p_M$ and $\Gamma^p_N$ only depends on the scattering data, $e$ is constant on each component of $\Gamma^0_M$.
\end{proof}

\subsection{Pairs of p-geodesics}
Recall that $\Gamma^p_M$ is the space of p-geodesics $[0, 1] \rightarrow M$ whose end points are on $\partial M$.
For any $p, q \in \partial M$,
define $P_M(p, q) := \{(\gamma_1, \gamma_2) \in \Gamma^p_M \times \Gamma^p_M : \gamma_1(0) = p, \gamma_2(0) = q, \gamma_1(1) = \gamma_2(1)\}$.
Each element in $P_M(p, q)$ will be called a \emph{pair} (of geodesics based at $p$ and $q$),
and the end point $\gamma_1(1)$ will be called the \emph{root} of the pair, denoted by $r(\gamma_1, \gamma_2)$.

Put $P_M^0(p, q) = P_M(p, q) \bigcap (\Gamma^0_M \times \Gamma^0_M)$.
Then define $l : P_M^0(p, q) \rightarrow \real$ as
$l(\gamma_1, \gamma_2) = e(\gamma_1) - e(\gamma_2)$.
Since $e$ is constant on each component of $\Gamma^0_M$,
$l$ is constant on each component of $P_M^0(p, q)$.

Let $Q_M(p, q) \subset P_M(p, q)$ be the space of pairs which do not overlap at the end.
($\gamma_1$ and $\gamma_2$ are said to overlap at the end if there are $a_1 \in [0, 1)$ and $a_2 \in [0, 1)$ such that $\gamma_1|_{[a_1, 1]}$ and $\gamma_2|_{[a_2, 1]}$ coincide.)
Suppose that $(\gamma_1, \gamma_2) \in P_M(p, q)$ and that $\gamma_1$ and $\gamma_2$
has an overlapping part $\gamma_3$ at the end.
If we remove $\gamma_3$ from $\gamma_1$ and $\gamma_2$,
then we obtain another pair $(\gamma_4, \gamma_5) \in P_M$ where
$\gamma_1 = \gamma_4 * \gamma_3$ and $\gamma_2 = \gamma_5 * \gamma_3$.
Define $b : P_M \rightarrow Q_M$ as
$b(\gamma_1, \gamma_2) = (\gamma_4, \gamma_5)$.
View $b$ as a quotient map and use the quotient topology on $Q_M$ induced by the compact open topology on $P_M$.

Since $e(\gamma_1) = \ell(\gamma_1) - \ell(\Phi(\gamma_1)) = \ell(\gamma_4) + \ell(\gamma_3) - \ell(\Phi(\gamma_4)) - \ell(\Phi(\gamma_3)) = e(\gamma_4) + e(\gamma_3)$,
we have
\begin{align*}
  l(\gamma_1, \gamma_2) &= e(\gamma_1) - e(\gamma_2)\\
  &= e(\gamma_4) + e(\gamma_3) - e(\gamma_5) - e(\gamma_3)\\
  &= e(\gamma_4) - e(\gamma_5)\\
  &= l(\gamma_4, \gamma_5)\\
  &= l (b (\gamma_1, \gamma_2)).
\end{align*}
Hence $l$ is also well-defined on $Q^0_M(p, q) := b(P^0_M(p, q))$,
and $l$ is constant on each component of $Q^0_M(p, q)$.
\begin{prop}
  Each component of $Q_M(p, q)$ is a 1-manifold without boundary.
\end{prop}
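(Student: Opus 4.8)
The plan is to realize $Q_M(p,q)$ as a quotient of $P_M(p,q)$, to show that $P_M(p,q)$ is a $1$-manifold without boundary, and then to control the quotient.

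I would begin by proving that $P_M(p,q)$ \emph{is a $1$-manifold without boundary}. Pass to the universal cover $\pi : \tilde M \to M$ and fix lifts $\tilde p, \tilde q$ of $p, q$. By Proposition \ref{producthomeo} the map $(s_M, e_M)$ is a homeomorphism $\Gamma^p_{\tilde M} \to \tilde M \times \tilde M$, so a p-geodesic issuing from $\tilde p$ (resp.\ from $\tilde q$) and ending on $\partial \tilde M$ is determined by, and depends continuously on, its terminal endpoint; the space of such p-geodesics is homeomorphic to $\partial \tilde M$. Consequently a pair $(\gamma_1, \gamma_2) \in P_M(p,q)$ is recorded faithfully by the pair $(\tilde r_1, \tilde r_2) \in \partial \tilde M \times \partial \tilde M$ of endpoints of the lifts of $\gamma_1, \gamma_2$ based at $\tilde p, \tilde q$, subject to the single constraint $\pi(\tilde r_1) = \pi(\tilde r_2)$. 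Since $\pi_1(M)$ acts freely on $\tilde M$, this constraint locus is the disjoint union over $g \in \pi_1(M)$ of the graphs $\{(\tilde r, g \tilde r) : \tilde r \in \partial \tilde M\}$, the deck invariant $g$ is locally constant on $P_M(p,q)$, and each graph is homeomorphic to $\partial \tilde M$. As $\partial M$ is a finite disjoint union of circles, $\partial \tilde M$ is a disjoint union of lines and circles, hence a $1$-manifold without boundary; and continuity of the inverse of $(s_M, e_M)$ makes this identification a homeomorphism. (Constant p-geodesics, which occur when a root coincides with $p$ or $q$, cause no trouble.) Thus $P_M(p,q)$ is a $1$-manifold without boundary.

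Next I would study the map $b : P_M(p,q) \to Q_M(p,q)$ that deletes the maximal common terminal arc of a pair. If $(\gamma_1, \gamma_2)$ has a nontrivial such arc $\gamma_3$, then the reductions $\gamma_4, \gamma_5$ reach $z := \gamma_3(0)$ with the same tangent but as distinct p-geodesics; by the uniqueness inherent in Proposition \ref{producthomeo} this forces $z \in \partial M$ with $\gamma_3$ initially tangent to $\partial M$, so $z$ lies in $\overline{S_- \cup S_0}$, and $b^{-1}(\gamma_4, \gamma_5)$ is assembled from the common continuations $\gamma_3'$ of $\gamma_4$ and $\gamma_5$ running along $\overline{S_- \cup S_0}$ from $z$ (together with the finitely many interior-geodesic continuations available where such a $\gamma_3'$ would leave the boundary at a switch point). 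Using that $F_M$ is finite, that $M$ has no conjugate points, and that the absence of trapped geodesics gives a uniform upper bound for the lengths of p-geodesics, I would show that these fibers are compact, that distinct fibers are disjoint, that the family of nontrivial fibers is locally finite in $P_M(p,q)$, and that $b$ is closed --- so $Q_M(p,q)$ is Hausdorff and second countable.

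It then remains to verify that collapsing this family of fibers in the $1$-manifold $P_M(p,q)$ yields, on each connected component, again a $1$-manifold without boundary. Off the fibers $b$ is a local homeomorphism, so the work concentrates near the image of a fiber, where one must rule out the creation of a point of local valence other than $2$: a boundary point, or --- more dangerously --- a branching point, which could a priori occur because a fiber need not be connected when there is more than one way to continue $\gamma_4, \gamma_5$ past $z$. This is the main obstacle: it amounts to a local normal form for how an overlapping terminal arc is born and dies as the root of the pair moves across a switch point or across an endpoint of a concave boundary arc, showing that the two ``ends'' of each collapsed configuration approach its image in $Q_M(p,q)$ from exactly the two sides, so that all branches match up in pairs. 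This is precisely where the hypotheses (finite $F_M$, no conjugate points, no trapped geodesics) are used; granting it, each component of $Q_M(p,q)$ is a $1$-manifold without boundary.
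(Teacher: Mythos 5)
Your first step --- identifying $P_M(p,q)$, via the universal cover and Proposition \ref{producthomeo}, with the set $\{(\tilde r,g\tilde r):\tilde r\in\partial\tilde M,\ g\in\pi_1(M)\}\subset\partial\tilde M\times\partial\tilde M$, hence a $1$-manifold without boundary parametrized by the (lifted) root --- is a sound and genuinely different framing from the paper, which never discusses $P_M(p,q)$ globally and instead analyzes neighborhoods in $Q_M(p,q)$ directly. But the proposal does not prove the statement: the entire content of the proposition lies in the step you flag as ``the main obstacle'' and then assume (``granting it''). A point of $Q_M(p,q)$ whose two p-geodesics meet tangentially at the root is exactly the image of a nontrivial fiber of $b$, and it is precisely there that one must rule out boundary points and, more seriously, branch points: the fiber is an interval's worth of common extensions, and a priori each place where an extension can be prolonged in more than one way (stay on the boundary versus leave it tangentially) could feed a separate half-neighborhood into the quotient. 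The paper's proof is exactly this local normal form: for a tangent pair $q_2$ it builds the specific extension $q_4$ by following the common continuation $l$ only up to the first parameter $a$ at which $l$ is tangent to $\partial M$ with the transported normal pointing outwards (Figures \ref{fig:q2}--\ref{fig:q4}), shows that moving the root of $q_2$ in one direction and the root of $q_4$ in the direction of $-l'(a)$ each give a half-neighborhood of points of $Q_M(p,q)$, and --- crucially --- that small neighborhoods of every other extension of $q_2$ contain no points of $Q_M(p,q)$ at all, since all nearby pairs retain a terminal overlap. Without an argument of this kind, ``each collapsed configuration is approached from exactly two sides'' is an assertion, not a proof, and it is the only nontrivial part of the proposition.

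Two smaller inaccuracies also matter for carrying out your program. First, the maximal common terminal arc $\gamma_3$ need not run along $\overline{S_-\cup S_0}$: the typical overlap (Figures \ref{fig:jump3}--\ref{fig:jump4}) is an interior geodesic arc leaving $\partial M$ tangentially at a strictly concave point, possibly grazing the boundary again later --- this is exactly why the paper must stop the extension at the first outward-tangency $l(a)$ rather than at a switch point, and it is where the potential branching lives; your description of the fibers (boundary arcs plus interior continuations only at switch points) misses these configurations. Second, the deduction that the merge point $z$ lies on $\partial M$ does not follow from Proposition \ref{prop:no_conj} (the two p-geodesics have different initial points, so uniqueness between endpoint pairs says nothing); it follows from backward uniqueness of interior geodesics: if $z$ were interior, $\gamma_4$ and $\gamma_5$ would coincide near their ends, contradicting $(\gamma_4,\gamma_5)\in Q_M(p,q)$. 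These can be repaired, but the missing local analysis at tangent pairs cannot be waved through: it is the theorem.
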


\begin{proof}
  When the two geodesics are not tangent to each other at the root, nearby pairs in $Q_M(p,q)$ are determined by their roots. Hence a neighborhood of this pair is homeomorphic to a neighborhood of its root in the boundary, which is a 1-manifold. (See Figure \ref{fig:jump0}.)
  \begin{figure}[h]
    \centering
    \includegraphics[width=0.2\linewidth]{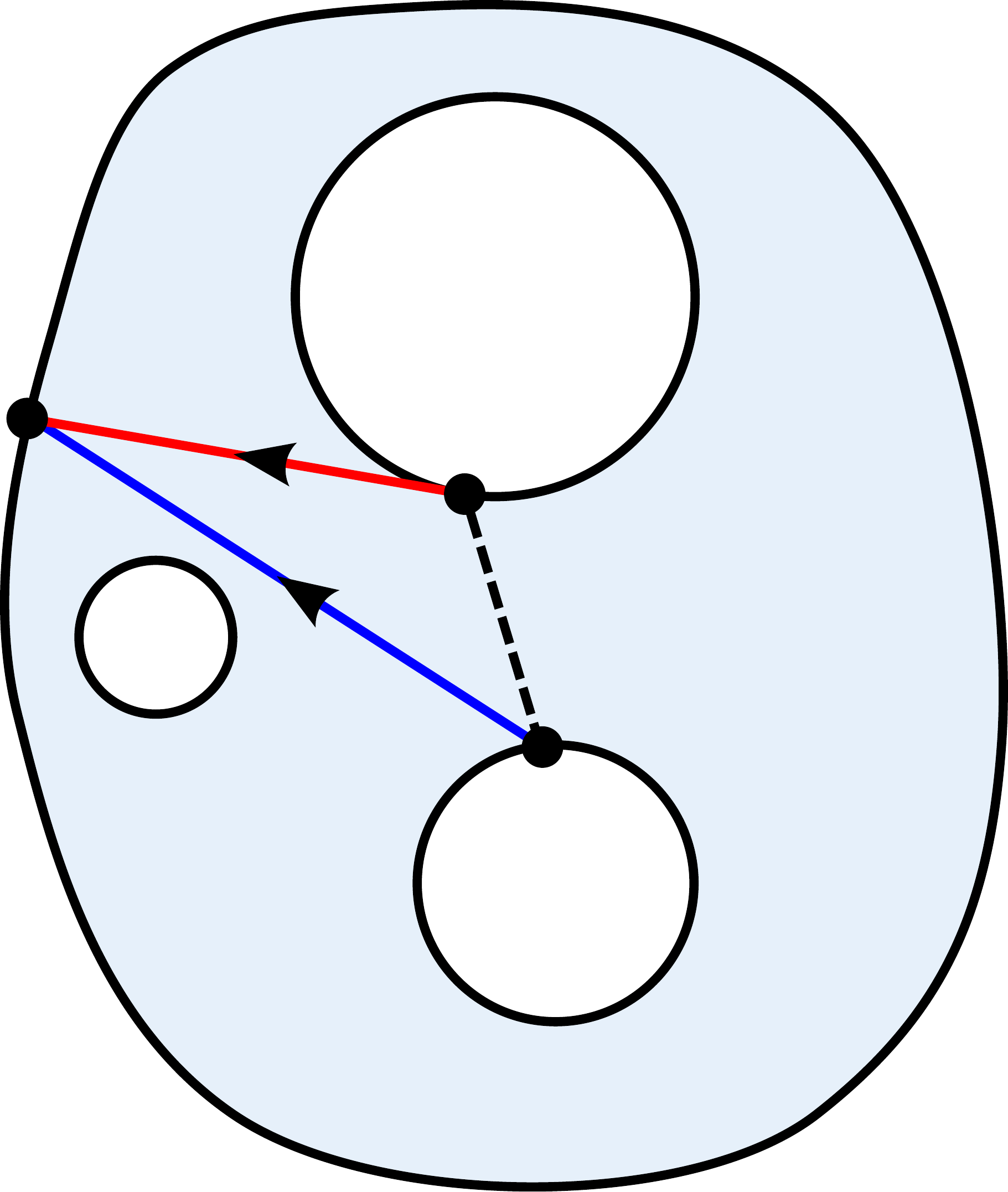}
    \caption{The two geodesic are not tangent to each other at the root.}
    \label{fig:jump0}
  \end{figure}

  If we move the root along the boundary, we may get a pair which has an overlapping part like the one in Figure \ref{fig:jump3}.
  Note that this pair is not in $Q_M(p, q)$ since it has an overlapping part.
  \begin{figure}[h]
    \centering
    \includegraphics[width=0.2\linewidth]{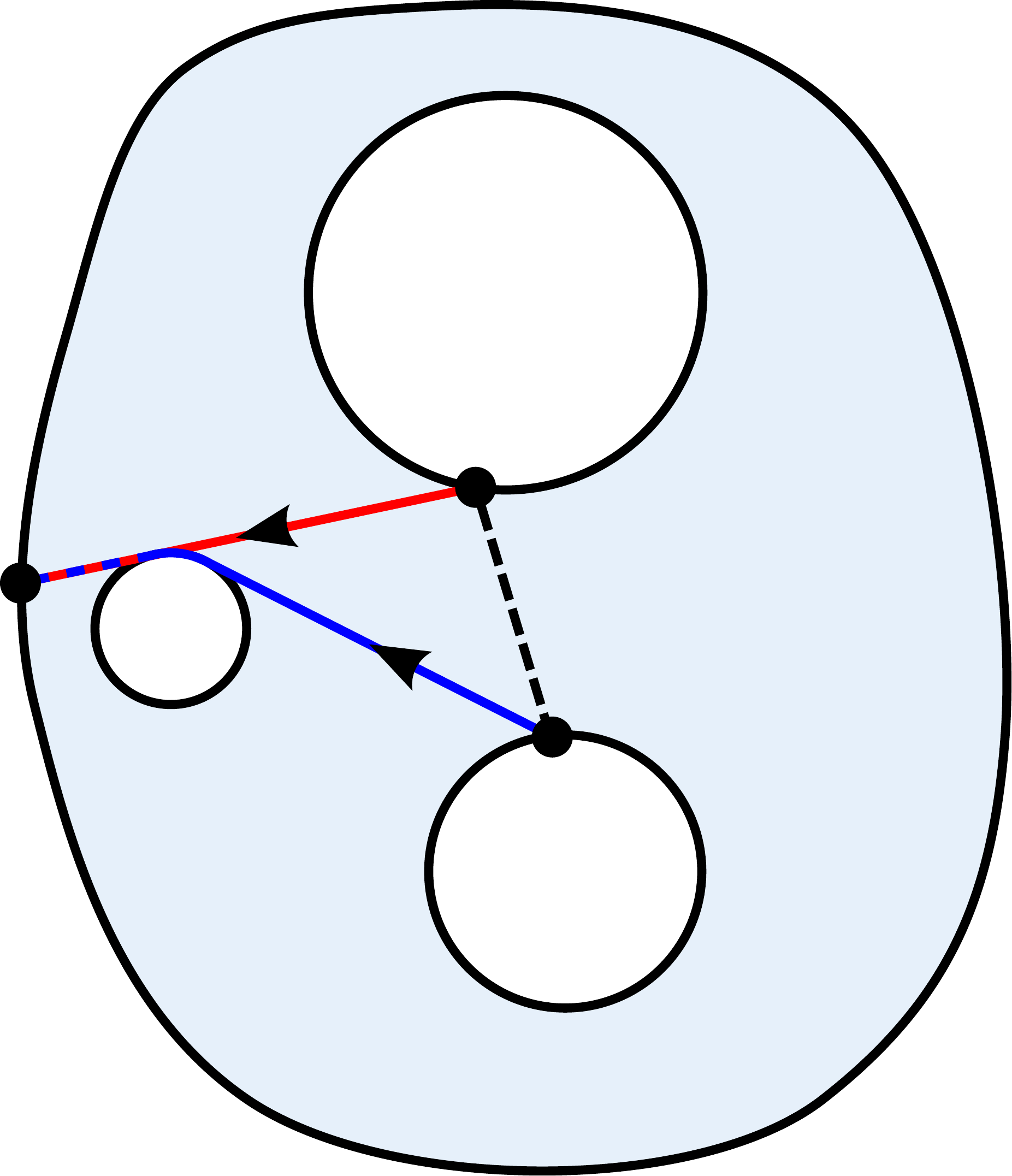}
    \caption{The two geodesic in a pair have an overlapping part.}
    \label{fig:jump3}
  \end{figure}

  The overlapping part will persist if we move the root further (Figure \ref{fig:jump4}).
  \begin{figure}[h]
    \centering
    \includegraphics[width=0.2\linewidth]{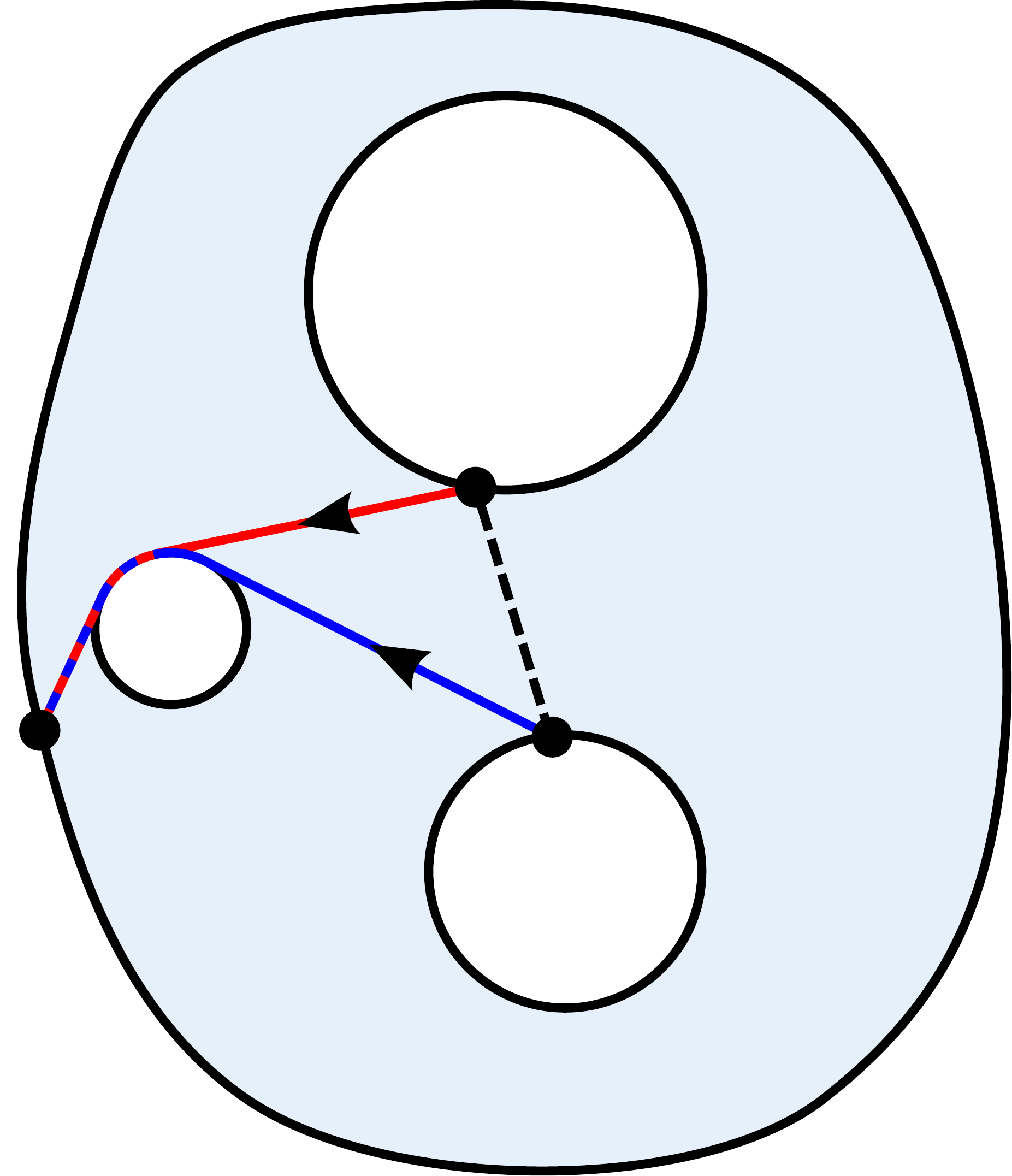}
    \caption{Not in $Q_M(p, q)$.}
    \label{fig:jump31}
  \end{figure}
  Such pairs are not in $Q_M(p, q)$ since they have overlapping part at the end.

  $b$ of the pair in Figure \ref{fig:jump3} gives us a pair in $Q_M(p,q)$ (Figure \ref{fig:jump4}).
  This pair will be called $q_1$.
  \begin{figure}[h]
    \centering
    \includegraphics[width=0.2\linewidth]{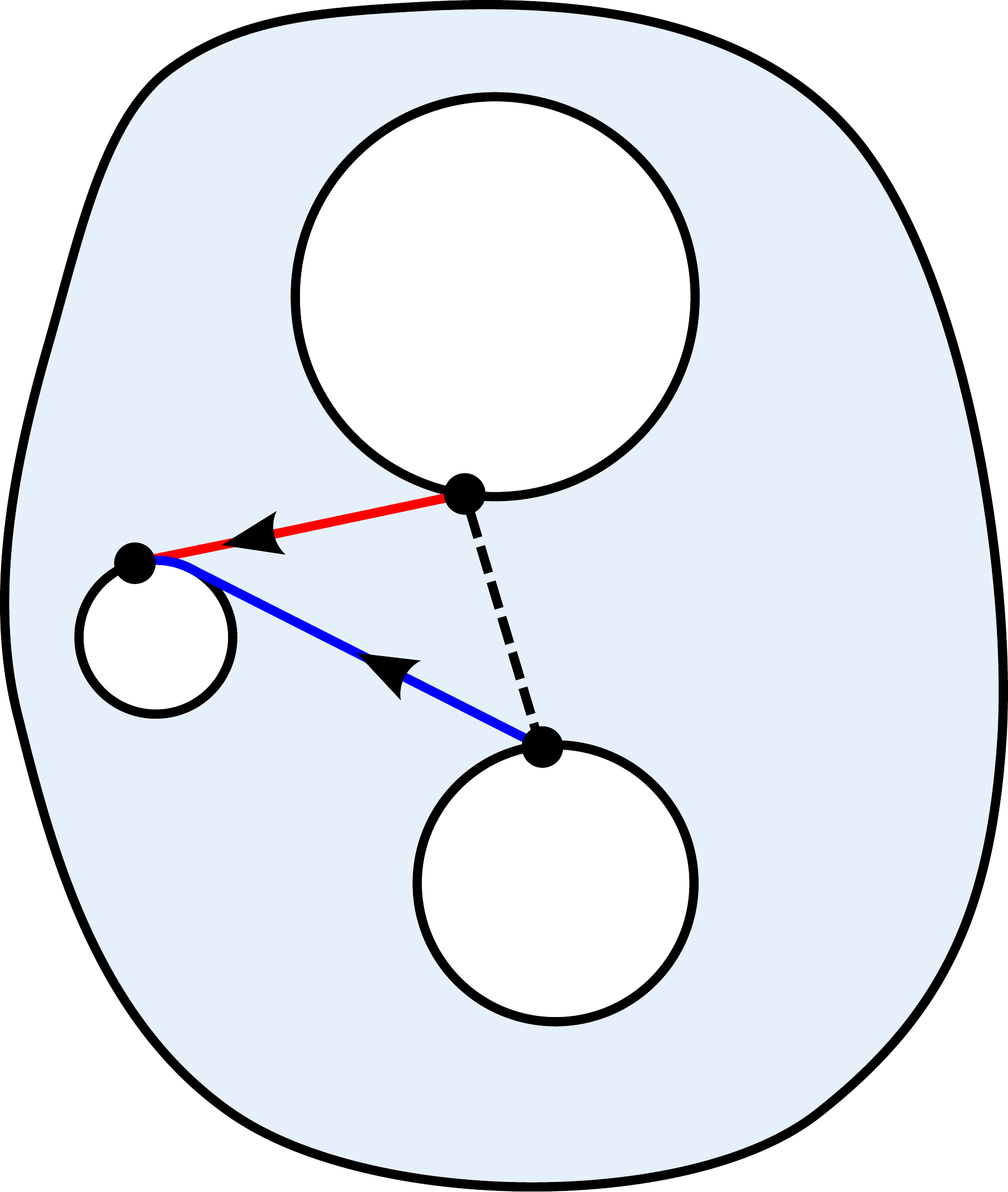}
    \caption{$q_1$}
    \label{fig:jump4}
  \end{figure}

  Now, we can move the root in two directions, but only one of them gives elements in $Q_M(p,q)$.
  For example, in Figure \ref{fig:jump4},
  if we move the root to the left, the pair will have an overlapping part
  while moving the root to the right gives pairs in $Q_M(p, q)$ (Figure \ref{fig:jump5}).
  \begin{figure}[h]
    \centering
    \includegraphics[width=0.2\linewidth]{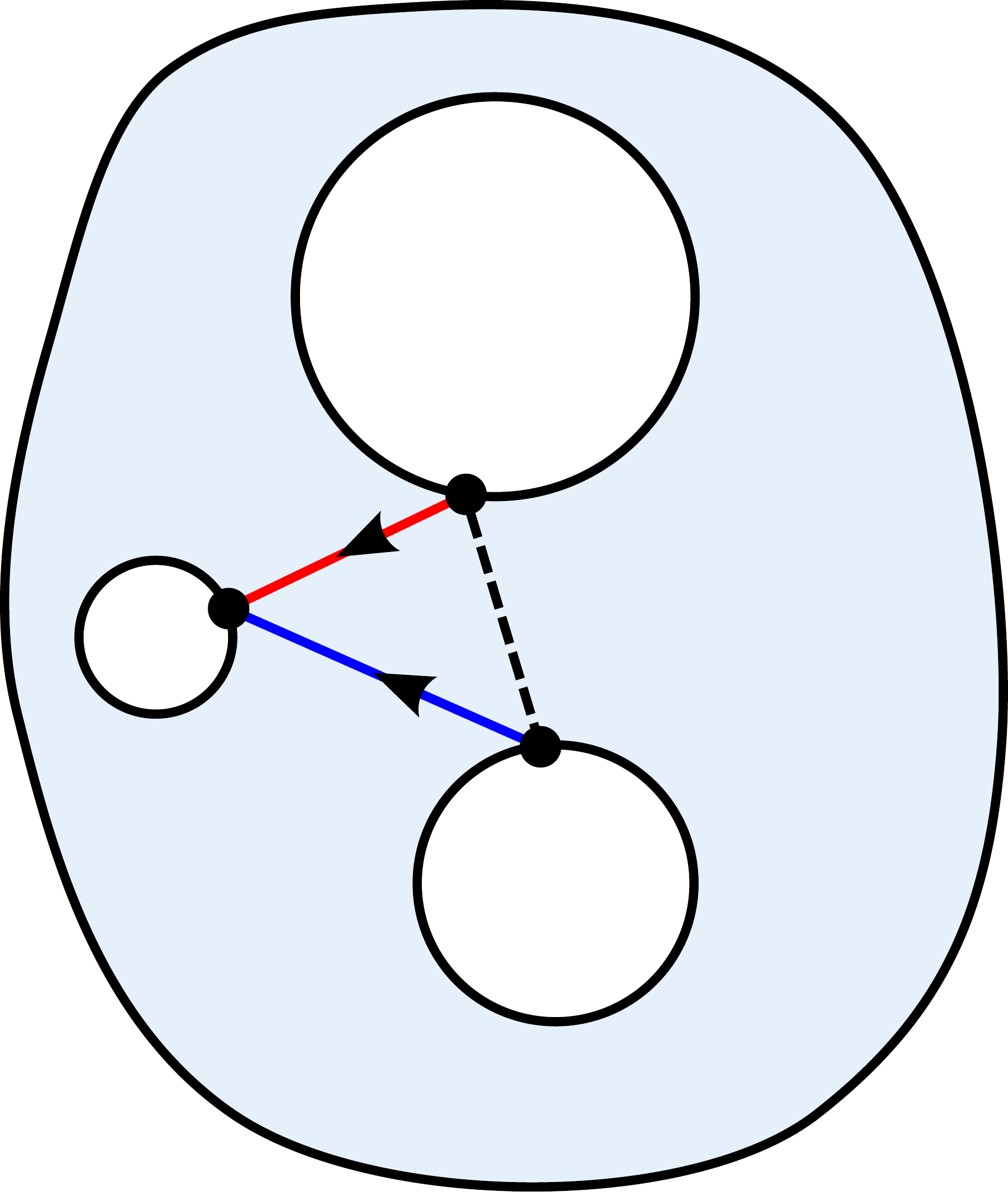}
    \caption{}
    \label{fig:jump5}
  \end{figure}

  So far, we have deformed the pair in Figure \ref{fig:jump0} to the pair Figure \ref{fig:jump4} by moving the root continuously except when going from Figure \ref{fig:jump3} to Figure \ref{fig:jump4}.
  Denote the pairs in between (except the one in Figure \ref{fig:jump3}) by $U$. $U$ is actually a neighborhood of $q_1$ in $Q_M(p, q)$.
  $q_1$ separates $U$ into two parts,
  and each part is a  1-manifold since roots in each part are taken from a 1-manifold continuously.

  Now, consider any pair $q_2$ consisting of geodesics which are tangent to each other at the root (Figure \ref{fig:q2}).
  \begin{figure}[h]
    \centering
    \includegraphics[width=0.2\linewidth]{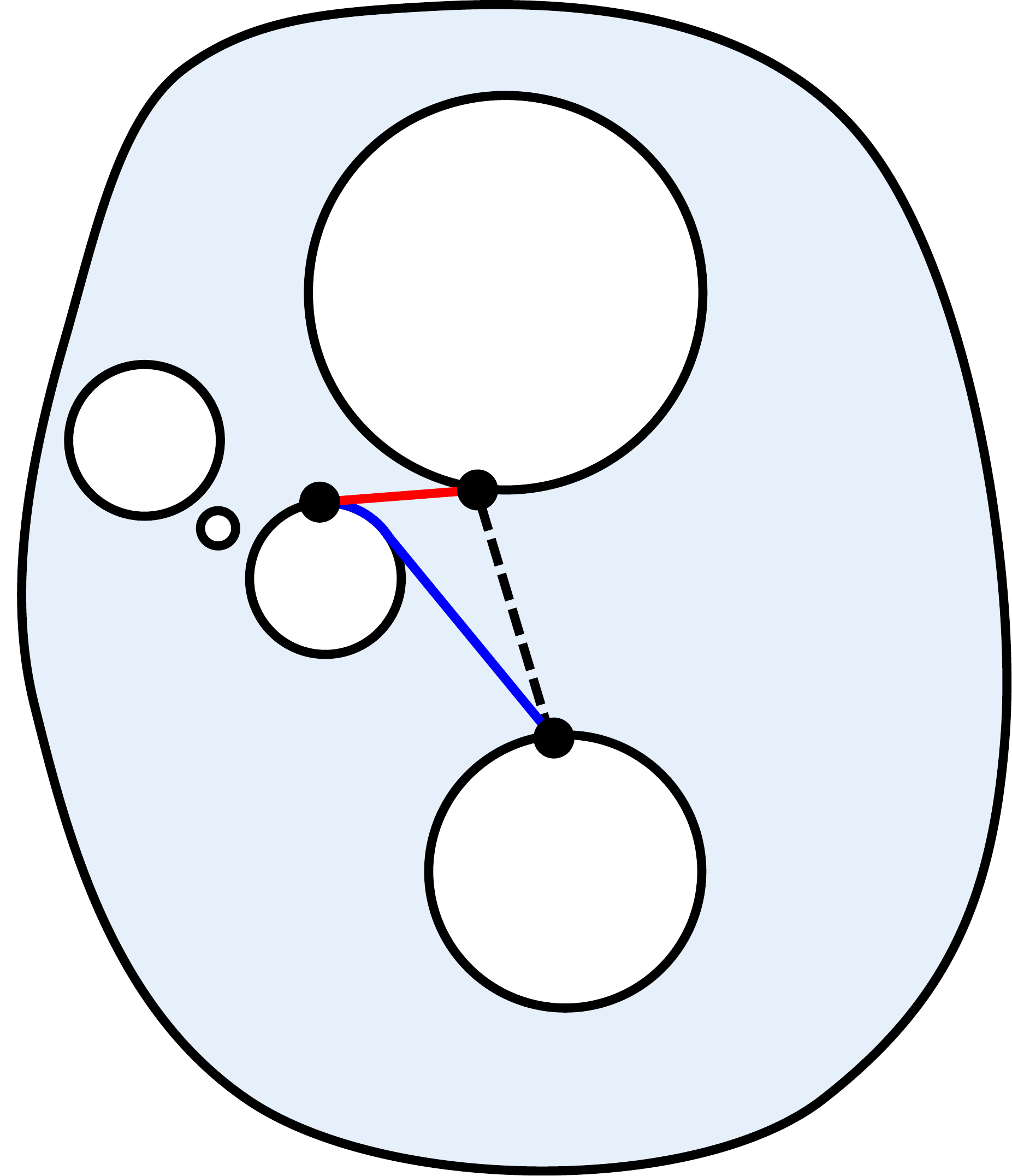}
    \caption{$q_2$}
    \label{fig:q2}
  \end{figure}
  Extending the geodesics in $q_2$ as long as possible,
  we obtain a new pair $q_3 \in P_M(p, q)$ (Figure \ref{fig:q3}).
  \begin{figure}[h]
    \centering
    \includegraphics[width=0.2\linewidth]{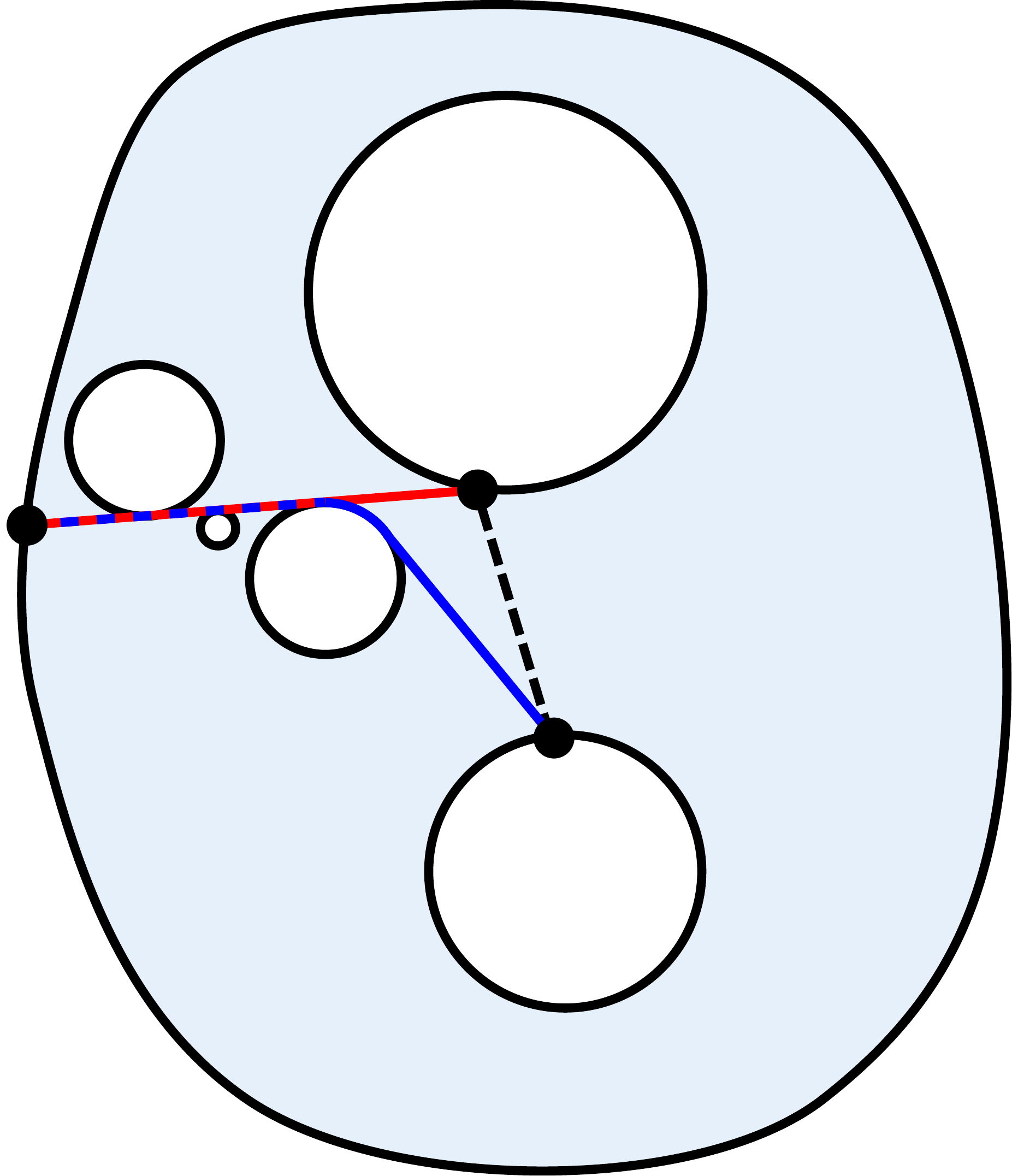}
    \caption{$q_3$}
    \label{fig:q3}
  \end{figure}
  $q_2$ will have a neighborhood like above if the overlapping part of $q_3$  is not tangent to the boundary in the middle.
  Assume the overlapping part of $q_3$  is tangent to the boundary in the middle.
  Call the overlapping part $l$.
  Here $l : [0, 1] \rightarrow M$ is a geodesic starting at the root of $q_2$.
  Let $X : [0, 1] \rightarrow \Omega M$ be a unit normal vector field along $l$ which is pointing inwards (as a unit normal vector of $\partial M$) at the root of $q_2$.
  Pick the smallest $a \in (0, 1]$, if it exists, such that
  $l$ is tangent to $\partial M$ at $l(a)$
  and that $X(a)$ is pointing outwards.
  If there is no such $a$,
  put $a = 1$.
  We obtain a pair $q_4$ by adding $l|_{[0, a]}$ to pairs in $q_2$ (Figure \ref{fig:q4}).
  \begin{figure}[h]
    \centering
    \includegraphics[width=0.2\linewidth]{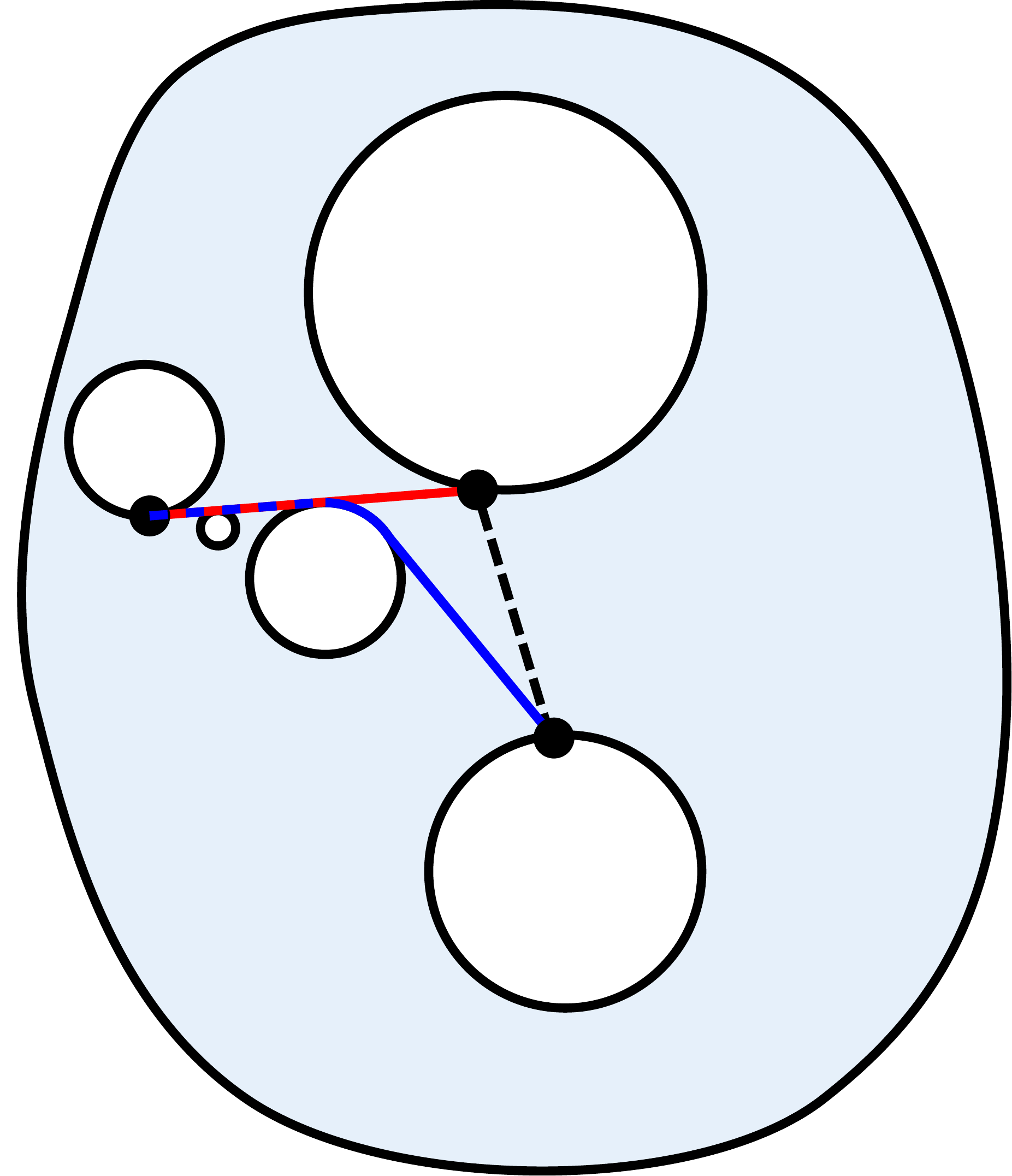}
    \caption{$q_4$}
    \label{fig:q4}
  \end{figure}
  We may move the root of $q_4$ along the boundary in two directions,
  obtaining pairs in $P_M(p, q)$.
  If we move the root along the boundary in the direction of $-l'(a)$,
  we will get pairs in $Q_M(p, q)$.
  If we move the root along the boundary in the direction of $l'(a)$,
  the overlapping part will persist.
  Notice that small neighborhoods of any other extensions of $q_2$ do not contain elements in $Q_M(p, q)$ because all their elements are pairs of geodesics with overlapping part at the ends.
  Hence $q_2$ has a neighborhood homeomorphic to $(0, 1)$,
  where one half of the neighborhood comes from half of a small neighborhood of $q_2$ in $P_M(p, q)$,
  and another half of the neighborhood comes from half of a small neighborhood of $q_4$ in $P_M(p, q)$.
  Note that $b(q_4) = q_2$.

  Therefore, $Q_M(p, q)$ is a 1-manifold without boundary.

\end{proof}

Suppose that there is a geodesic $\gamma$ from $p$ to $q$.
Let $\gamma_q$ be the constant curve at $q$,
$\gamma_p$ be the constant curve at $p$,
and $-\gamma$ be $\gamma$ with the opposite direction.
\begin{prop}
  $(\gamma, \gamma_q)$ and $(\gamma_p, -\gamma)$ are in the
  same component of $Q_M(p, q)$.
  Moreover,
  that component is homeomorphic to a circle.
  \label{prop:reverse}
\end{prop}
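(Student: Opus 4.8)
The plan is to verify that the two pairs genuinely lie in $Q_M(p,q)$, to join them by an explicit deformation of the root, and then to strengthen the preceding proposition by showing their common component is compact. For the first point, a constant curve at a point of $\partial M$ is a p-geodesic with both endpoints on $\partial M$, so $(\gamma,\gamma_q)$ and $(\gamma_p,-\gamma)$ lie in $P_M(p,q)$; since a constant curve cannot coincide with a non-constant one on an interval abutting the common endpoint, neither pair has an overlapping tail, so $b$ fixes both. Granting that they lie in one component $C$, the preceding proposition gives that $C$ is a connected $1$-manifold without boundary, hence homeomorphic to $\mathbb{R}$ or to $S^1$, and it remains only to exclude the first possibility.

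To join the pairs inside $Q_M(p,q)$ I would slide the root along $\partial M$. Rotating the initial vector of $\gamma$ at $p$ moves its exit point along $\partial M$, and by the no-conjugate-points monotonicity used in the proof of Proposition \ref{prop:totally_geodesic} this exit point sweeps through $q$; write $\gamma_1^{r}$ for the geodesic from $p$ with exit point $r$, so $\gamma_1^{q}=\gamma$. For $r$ near $q$ there is a short p-geodesic $\gamma_2^{r}$ from $q$ to $r$ — a chord-like interior segment when $\partial M$ is strictly convex at $q$, an arc of $\partial M$ when it is concave — with $\gamma_2^{r}\to\gamma_q$ as $r\to q$, so $r\mapsto b(\gamma_1^{r},\gamma_2^{r})$ is a path in $Q_M(p,q)$ starting at $(\gamma,\gamma_q)$. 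Continuing to push $r$ towards $p$, and passing to $b$ of the pair whenever an overlapping tail develops, one produces a path in $C$ ending at a pair $(\gamma_p,\delta)$ with root $p$. To see $\delta=-\gamma$, note that the homotopy class rel endpoints of the concatenation $\gamma_1\ast\overline{\gamma_2}$ (traverse $\gamma_1$, then $\gamma_2$ backwards) is unchanged under small moves of a pair and under $b$ (deleting an overlapping tail cancels a back-and-forth piece), hence is locally constant on $Q_M(p,q)$; its value at $(\gamma,\gamma_q)$ is $[\gamma]$ and at $(\gamma_p,\delta)$ is $[\overline{\delta}]$, so $\delta$ is homotopic rel endpoints to $-\gamma$, and uniqueness of p-geodesics between two points of the universal cover (Proposition \ref{prop:no_conj}) forces $\delta=-\gamma$. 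Thus $(\gamma,\gamma_q)$ and $(\gamma_p,-\gamma)$ both lie in $C$.

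To see $C\cong S^1$ it suffices to prove $C$ is compact. Since $M$ has no trapped geodesics there is a uniform bound $D$ on the length of any p-geodesic of $M$ with endpoints on $\partial M$. Take a sequence in $C$ with non-overlapping representatives $(\gamma_1^{i},\gamma_2^{i})\in P_M(p,q)$ and roots $r_i$; lifting $p$ to $\tilde p$ and $\gamma_1^{i}$ to the p-geodesic from $\tilde p$ to a lift $\tilde r_i$ of $r_i$ in the universal cover $\tilde M$ (which is complete and locally compact, hence proper), we get $d_{\tilde M}(\tilde p,\tilde r_i)\le D$, so a subsequence of the $\tilde r_i$ converges and, by Proposition \ref{prop:no_conj}, the associated p-geodesics converge; lifting each $\gamma_2^{i}$ to end at $\tilde r_i$ and arguing the same way, a subsequence of $(\gamma_1^{i},\gamma_2^{i})$ converges in $P_M(p,q)$ to a pair whose endpoints still lie on $\partial M$. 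By continuity of the quotient map $b$, the chosen representatives map to a sequence in $Q_M(p,q)$ converging to a point of $C$ (a component is closed); since $C$ carries the subspace topology of a $1$-manifold the original sequence converges in $C$, so $C$ is sequentially compact, hence compact, hence $C\cong S^1$.

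The main obstacle is the middle step. Making the root-slide rigorous forces one to follow the pair continuously through $b$ as overlapping tails appear and disappear, to control the degenerate endpoints of the deformation where a member collapses to a constant, and above all to be sure the deformation really does reach a pair whose first member is constant rather than, say, winding indefinitely; the concatenation-class invariant handles the identification of the limiting geodesic once one is there, but managing the deformation itself — exactly the kind of case analysis carried out in the proof of the preceding proposition — is where the work lies. The compactness step is by comparison routine, given the length bound and Proposition \ref{prop:no_conj}.
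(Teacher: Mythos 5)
Your proposal leaves the central point unproved, and the route you suggest for the remaining point rests on a false premise. The crux of this proposition is precisely the possibility you flag at the end: that as you slide the root from $q$ towards $p$ the pair ``winds indefinitely'' and never reaches $(\gamma_p,-\gamma)$ --- equivalently, that the component containing $(\gamma,\gamma_q)$ is a ray rather than a segment closing up into a circle. You acknowledge this is ``where the work lies'' but do not supply the argument, whereas it is the entire content of the paper's proof: assuming $(\gamma_p,-\gamma)$ is not in the component, the paper parametrizes the component by $[0,\infty)$, shows the two members $\alpha(t),\beta(t)$ converge to infinite p-geodesic rays $\alpha_0,\beta_0$ (using that on a bounded initial segment a p-geodesic can only wrap around a concave boundary arc in one direction), and then, inside the p-geodesic triangle bounded by $\alpha_0$, $\beta_0$ and $\gamma$, considers maximal geodesics $g_s$ tangent to $\alpha_0$ at $\alpha_0(s)$; uniqueness of p-geodesics in the universal cover (Proposition \ref{prop:no_conj}) forces an endpoint of $g_s$ onto $\gamma$, and letting $s\to\infty$ produces arbitrarily long geodesics converging to a trapped geodesic, a contradiction. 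Nothing in your outline replaces this argument, and the homotopy-class invariant you introduce only identifies $\delta=-\gamma$ \emph{after} one knows the deformation arrives at a pair with constant first member.

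Your compactness step does not repair this, because the claimed uniform bound $D$ on lengths of p-geodesics with endpoints on $\partial M$ does not follow from the absence of trapped geodesics. No trapped geodesics bounds the lengths of \emph{geodesics}, but a p-geodesic may run along concave boundary segments and wrap around a concave boundary component arbitrarily many times: for instance, in a flat planar annulus (no trapped geodesics, no conjugate points, $F_M=\emptyset$) a p-geodesic can enter tangentially, circle the inner boundary $k$ times, and leave, for any $k$. Consequently the lifts $\tilde r_i$ of the roots need not stay in a bounded ball of $\tilde M$, and sequential compactness of the component cannot be extracted this way. The paper's proof is structured exactly to cope with this wrapping phenomenon --- it allows the limit rays $\alpha_0,\beta_0$ to be infinitely long and rules them out only through the tangent-geodesic/triangle argument --- so the winding issue must be confronted head on rather than bypassed by a length bound.
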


\begin{rem}
  This proposition also holds if $\gamma$ is a p-geodesic.
\end{rem}

\begin{proof}
  Without loss of generality, we assume that $M$ is simply connected but not necessarily compact.
  The curve $\gamma$ separate $M$ into two components.
  Namely,
  $M = D_1 \bigcup D_2$
  where $D_1 \bigcap D_2 = \gamma$.
  Without loss of generality,
  assume that $D_1$ is on the left of $\gamma$.
  Let $Q_1 = \{(\gamma_1, \gamma_2) \in Q_M : \gamma_1(1) \in D_1\}$,
  and $Q_2 = \{(\gamma_1, \gamma_2) \in Q_M : \gamma_1(1) \in D_2\}$.
  Let $Q_1^0$ be the component of $Q_1$
  which contains $(\gamma, \gamma_q)$.
  If $(\gamma_p, -\gamma) \notin Q_1^0$,
  then $Q_1^0$ has only one end point $(\gamma, \gamma_q)$.
  It follows that $Q_1^0$ is homeomorphic to $[0, \infty)$.
  Let $(\alpha, \beta) : [0, \infty) \rightarrow Q_1^0$
  be a homeomorphism.
  Here we reparametrize $\alpha(t)$ and $\beta(t)$ using the length parameter.
  We shall show that $\alpha(t)$ converges to a infinitely long p-geodesic ray $\alpha_0$
  as $t \rightarrow \infty$.

  Pick any $s \in [0, \infty)$.
  Notice that the space of p-geodesics starting at $\gamma_1(0)$
  whose length is less or equal to $s$ is homeomorphic to the closed ball
  $\overline{B_s(\tilde{p})}$ where $\tilde{p}$ is a lift of $\gamma_1(0)$ in $\tilde{M}$,
  which is compact.
  Hence there is $T_s \in [0, \infty)$ such that
  $\ell(\alpha(t)) > s$ when $t > T_s$.
  When $t > T_s$,
  $\alpha(t)|_{[0, s]}$ can either wrap around a concave boundary part in a fixed direction or stay still as $t$ increases.
  Notice that $\alpha(t)|_{[0, s]}$ can not wrap around a concave boundary part forever since it only has finite length $s$.
  Hence $\alpha(t)|_{[0, s]}$ converges pointwisely to a p-geodesic as $t$ converges to $\infty$.
  Therefore, $\alpha(t)$ converges to a infinitely long p-geodesic $\alpha_0$ pointwisely
  as $t \rightarrow \infty$.
  $\beta(t)$ converges to some $\beta_0$ similarly.

  Since $\alpha(t)$ and $\beta(t)$ only intersect at the root,
  $\alpha_0$ and $\beta_0$ do not intersect transversely.
  Use the length parameter on $\alpha_0$ such that $\alpha_0(0) = p$
  and that $\alpha_0(s)$ is defined for $s \ge 0$.

  Consider the p-geodesic ``triangle'' bounded by
  $\alpha_0$, $\beta_0$ and $\gamma$.
  For any $s > 0$,
  there is a maximal geodesic $g_s$ in the triangle which is tangent to $\alpha_0$ at $\alpha_0(s)$.
  Since $M$ has no trapped geodesics,
  $g_s$ has two end points.
  If one of the end points is on $\alpha_0$,
  then there will be two different p-geodesics between that end point and $\alpha_0(s)$,
  which contradicts Proposition \ref{prop:no_conj}.
  If both  of these two end points are on $\beta_0$,
  then there will be two different p-geodesics between these two end points,
  which contradicts Proposition \ref{prop:no_conj} again.
  Therefore, at least one of the end points are on $\gamma$.
  Let $X_s$ be the unit tangent vector based at that end point which is tangent to $g_s$ and pointing towards the inside of the triangle.
  There is a sequence $s_k \rightarrow \infty$ such that
  $X_{s_k}$ converges.
  By triangle inequality,
  $\ell(g_s) \ge s - \ell(\gamma)$.
  Hence $g_{s_k}$ converges to a geodesic with infinite length,
  which contradicts our assumption that $M$ has no trapped geodesics.

  Therefore, $Q_1$ is homeomorphic to $(0, 1)$.
  Similarly, $Q_2$ is also homeomorphic to $(0, 1)$.
  Thus
  the component of
  $Q_M(p, q)$
  which contains
  $(\gamma, \gamma_q)$ and $(\gamma_p, -\gamma)$
  is a circle.
\end{proof}

\section{convex part of the boundary}
The goal of this section is to prove the following proposition.
\begin{prop}
  For any $p_0 \in S$,
  the curvature of $\partial M$ at $p_0$ is the same as the curvature of $\partial N$ at $h(p_0)$.
  \label{prop:no_trapped}
\end{prop}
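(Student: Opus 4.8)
The plan is to recover the geodesic curvature of $\partial M$ at a convex boundary point from the way very short geodesic chords graze the boundary there, as recorded by the scattering relation. Write $\kappa_M$ for the geodesic curvature of $\partial M$. It suffices to treat $p_0\in S_+$, i.e.\ with $\kappa_M(p_0)>0$; the remaining points of $S$ are points where $\kappa_M$ vanishes, and for those the equality follows by continuity once it is known on the dense subset $S_+$, since $\partial M$ and $\partial N$ are smooth.

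First I would show that $\kappa_M(p_0)$ is determined by $\alpha_M$ and $h$ alone. By strict convexity there is $\delta>0$ so that for $0<\theta<\delta$ the geodesic $\gamma_\theta$ issuing from $p_0$ at angle $\theta$ to $\partial M$ is a short chord contained in a fixed convex coordinate neighborhood of $p_0$ and nowhere tangent to $\partial M$. A standard computation in boundary normal coordinates (equivalently a Jacobi field / Riccati estimate) shows that its far endpoint $q(\theta):=\pi(\alpha_M(\gamma_\theta'(0)))$ satisfies $d_{\partial M}(p_0,q(\theta))=\tfrac{2}{\kappa_M(p_0)}\theta+O(\theta^2)$, with outgoing angle $\theta+O(\theta^2)$. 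Hence $\kappa_M(p_0)=2\lim_{\theta\to 0^+}\theta/d_{\partial M}(p_0,q(\theta))$, an expression in $\alpha_M$ and $\partial M$ only. Since $\pi\circ\varphi=h\circ\pi$ and $\varphi\circ\alpha_M=\alpha_N\circ\varphi$, the $N$-geodesic $\sigma_\theta$ with $\sigma_\theta'(0)=\varphi(\gamma_\theta'(0))$ ends at $h(q(\theta))$, and because $h$ preserves arclength along the boundary the same limit formed in $N$ at $h(p_0)$ equals the one formed in $M$ at $p_0$ --- \emph{provided} the $\sigma_\theta$ are themselves short chords grazing $\partial N$ at $h(p_0)$, i.e.\ provided $\partial N$ is strictly convex near $h(p_0)$. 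Granting that, $\kappa_N(h(p_0))=\kappa_M(p_0)$ follows at once.

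Thus the heart of the matter is to show that $h(p_0)$ lies in the strictly convex part of $\partial N$. By the above, $\sigma_\theta$ starts at $h(p_0)$ at angle $\theta$ to $\partial N$, ends at $h(q(\theta))$, and has outgoing angle $\theta+O(\theta^2)$; as $\theta\to 0$ both endpoints converge to $h(p_0)$ and both angles to $0$, so the limit $\sigma_0$ begins and ends at $h(p_0)$ tangentially to $\partial N$. If $\partial N$ were concave, totally geodesic, or had a switch point at $h(p_0)$, then $\sigma_0$ would have positive length and would therefore be a nontrivial closed ($p$-)geodesic of $N$ tangent to $\partial N$ at $h(p_0)$. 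I would exclude this --- it is exactly the fact foreshadowed in the remark following the construction of $\Phi$ --- by transporting such a closed geodesic back to $M$ via $\varphi$ and contradicting Proposition \ref{prop:no_conj} together with the absence of trapped geodesics in $M$. Hence $h(p_0)$ lies in the closure of the strictly convex part of $\partial N$, and since the previous paragraph then yields $\kappa_N(h(p_0))=\kappa_M(p_0)>0$, $\partial N$ is in fact strictly convex near $h(p_0)$; alternatively one uses Propositions \ref{prop:concave} and \ref{prop:totally_geodesic} to see that $h(S_+)$ meets the strictly convex part of $\partial N$ in a dense open set and propagates by an open--closed argument, using continuity of the boundary curvature. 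Combining with the preceding paragraph finishes the case $p_0\in S_+$, and the general case of $S$ follows by continuity.

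The main obstacle is this convexity step for $N$: the hypotheses of no trapped geodesics and no conjugate points are imposed only on $M$, so a priori one has essentially no control of $N$ near $h(p_0)$, and the argument must route through the non-existence of closed geodesics of $N$ tangent to $\partial N$, a structural fact that itself has to be wrung out of the scattering data and the rigidity of $M$. The local asymptotics of the scattering relation and the comparison with $N$ are routine once that is in place. One can also replace the scattering-relation asymptotics by a length comparison: Propositions \ref{prop:reverse} and \ref{prop:constante} force $\ell(\Phi(\gamma_\theta))=\ell(\gamma_\theta)$, and matching the expansions $\ell=d-\tfrac{1}{24}\kappa^2 d^3+O(d^4)$ of the two chord lengths as functions of the common boundary separation $d$ gives $\kappa_M(p_0)^2=\kappa_N(h(p_0))^2$; but this route too requires $\partial N$ strictly convex at $h(p_0)$ for the right-hand expansion to be meaningful.
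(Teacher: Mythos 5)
Your reduction is sound as far as it goes: recovering $\kappa_M(p_0)$ at a strictly convex point from the asymptotics of short grazing chords is standard, and you correctly identify that everything hinges on showing $\partial N$ is strictly convex at $h(p_0)$, equivalently on excluding a nontrivial closed geodesic of $N$ tangent to $\partial N$ there (in the paper's language, showing the jump constant $L$ vanishes). But the step you offer for that exclusion --- ``transporting such a closed geodesic back to $M$ via $\varphi$ and contradicting Proposition \ref{prop:no_conj} together with the absence of trapped geodesics'' --- does not work, and it is precisely where the real content of the proposition lies. A closed geodesic of $N$ tangent to $\partial N$ at $h(p_0)$ meets the boundary only tangentially; it is not $\gamma_{\varphi(X)}$ for any $X\in\partial_+\Omega M$, and the preimage under $\varphi$ of its tangent vector is the tangential vector $X_0$ at the strictly convex point $p_0$, where the corresponding object in $M$ is trivial. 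The scattering data are simply blind to such a geodesic --- this is exactly the phenomenon of the counterexample in Figure \ref{fig:ex1}, where identical scattering data coexist with lengths differing by a constant --- and since all hypotheses (no conjugate points, no trapped geodesics, finite $F_M$) are imposed on $M$ only, there is nothing in $N$ to contradict by a local transport argument.

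The paper has to spend an entire section on this point. It first proves that the limiting jumps $L_\pm(p)$ agree and are constant on each component of $\partial M$, and this already requires the pair spaces $Q_M(p,q)$, Proposition \ref{prop:reverse}, and the relation \eqref{eq:same_diff}. It then splits according to whether the component $S_1$ of $\partial M\setminus S_-$ containing $p_0$ is an arc --- in which case the boundary component contains a concave point, forcing $L=0$ --- or a closed curve, and in the closed case it runs a genuinely global topological argument in $P\Omega N$: the family $f:\partial N\times\sphere\to P\Omega N$ built from the grazing geodesics and the annulus $A$ is an embedding; the hypothetical closed tangent geodesic makes $f(x_0,\cdot)$ contractible via the explicit homotopies $H_1,H_2$, hence isotopically trivial (Proposition \ref{prop:trivial'}), contradicting \cite[Theorem 3.14]{We}. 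Your fallback route is moreover circular: the identity $\ell(\Phi(\gamma_\theta))=\ell(\gamma_\theta)$ is equivalent to $L=0$ and does not follow from Propositions \ref{prop:constante} and \ref{prop:reverse} alone --- one would need $e=0$ on constant curves, which is again exactly the nonexistence of a closed geodesic of $N$ tangent to the boundary, and the extension of $\Phi$ (hence of $e$) to such degenerate curves occurs in the paper only after, and using, the present proposition. So the proposal correctly locates the obstacle but leaves the essential step unproved.
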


Pick any $p_0 \in S_+$. We aim to show that $\partial N$ is also convex at $h(p_0)$.
Pick a unit tangent vector $X_0 \in \partial_0 \Omega_{p_0} M$.
There are two choices of $X_0$ but either works.

For any $\theta \in (0, \pi)$, let $X_\theta$ be the unit tangent vector in $\partial_+ \Omega_{p_0} X$ such that the angle between $X_0$ and $X_\theta$ is $\theta$.
Since $S$ is convex at $p_0$,
there is $\delta > 0$ such that
$\gamma_{X_\theta}$ is not tangent to $\partial M$ when $\theta \in (0, \delta)$.
Since $M$ and $N$ have the same scattering data,
$e(\gamma_{X_\theta}) = \ell(\gamma_{\Phi(X_\theta)}) - \ell(\gamma_{X_\theta})$ is equal to a fixed constant $L > 0$
when $\theta \in (0, \delta)$.
If $L = 0$, then $N$ is also convex at $h(p_0)$ \cite{Mi}.

Assume that $L \neq 0$, then there is a closed geodesic in $N$ of length $L$ which is tangent to $\partial N$ at $h(p_0)$.
We shall show that this is impossible in this section.

Let $S_1$ be the component of $\partial M \setminus S_-$  which contains $p_0$.
$S_1$ is either a closed circle,
or a curve with two ends.

\subsection{$S_1$ is a curve with two ends}

For any $Y \in \Omega \partial M$,
let $\overline\gamma_Y$ be the maximum geodesic ray whose initial tangent vector is $Y$.
If $\overline\gamma_Y$ is just a point, or if it only runs along a totally geodesic part of $\partial M$ and never leaves the boundary, then $Y$ is called a convex direction.
Otherwise, $Y$ is called a concave direction.

$\partial M$ is orientable since it is one-dimensional.
Fix an orientation on $\partial M$.
Then at each $p \in \partial M$,
there is a positively oriented unit tangent vector $Y_+(p) \in \Omega_p \partial M$,
and a negatively oriented unit tangent vector $Y_-(p) \in \Omega_p \partial M$.

Let $Y_n$ be a sequence of a unit tangent vectors based at $p$, pointing inwards and converging to $Y_+(p)$.
Then define
\begin{align*}
  L_+(p) :=
  \begin{cases}
    \lim_{n \rightarrow \infty} e(\gamma_{Y_n}) & \text{if $Y_+(p)$ is a convex direction,}\\
    0 & \text{if $Y_+(p)$ is a concave direction.}
  \end{cases}
\end{align*}
$L_-(p)$ is defined similarly.
\begin{prop}
  $L_+ = L_-$ and they are constant on each component of $\partial M$.
\end{prop}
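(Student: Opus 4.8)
The statement bundles three claims: that the limits defining $L_\pm(p)$ exist and are independent of the approximating sequence $Y_n$; that $L_+\equiv L_-$; and that both are constant on each component of $\partial M$. My plan is to reduce each of these to a connectedness statement inside $\Gamma^0_M$ and then invoke Proposition \ref{prop:constante} (that $e$ is constant on components of $\Gamma^0_M$), together with the concatenation identity $e(\gamma_1 * \gamma_2) = e(\gamma_1) + e(\gamma_2)$ (immediate from the definitions of $e$ and $\Phi$) and the fact that $\Phi$ acts by $h$ on p-geodesics running along $\overline{S_- \cup S_0}$, so that $e$ vanishes on those.

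If $Y_+(p)$ is a concave direction there is nothing to do: $L_+(p)=0$ by definition, and this holds on the whole open concave arc through $p$. So suppose $Y_+(p)$ is convex, i.e.\ $\overline\gamma_{Y_+(p)}$ is either the point $p$ (when $\partial M$ is strictly convex at $p$) or a totally geodesic boundary segment issuing from $p$. First I would check that if $Y_n\to Y_+(p)$ from the inside then $\gamma_{Y_n}$, reparametrised on $[0,1]$, converges in the compact--open topology to $\overline\gamma_{Y_+(p)}$; this is the same kind of argument as in the proof of Proposition \ref{prop:totally_geodesic}, using that $M$ has no trapped geodesics (so lengths stay bounded and the far endpoint returns to $p$) and Proposition \ref{producthomeo} (so p-geodesics vary continuously with their endpoints). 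Using concatenation to peel off any initial stretch of $\gamma_{Y_n}$ lying in $S_0$ (on which $e=0$), one reduces to the case $\partial M$ strictly convex at $p$; there $\gamma_{Y_n}\in\Gamma^0_M$ for all large $n$ (it is nonconstant and meets $\partial M$, with positive angle, only at its two endpoints, both in $S_+$), and as $Y$ runs over a short arc of inward directions near $Y_+(p)$ the curves $\gamma_Y$ trace out a connected subset of $\Gamma^0_M$. By Proposition \ref{prop:constante}, $e(\gamma_Y)$ is a single constant on that set, which simultaneously gives the existence of $L_+(p)$, its independence of $\{Y_n\}$, and the identity $L_+(p)=e(\gamma_Y)$ for all such $Y$.

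Local constancy of $L_+$ is then obtained by the same trick: for $p,p'$ close on $\partial M$ I slide the basepoint from $p$ to $p'$ along $\partial M$ while keeping the direction near $Y_+$, producing a continuous family joining the p-geodesic computing $L_+(p)$ to the one computing $L_+(p')$. I expect the main obstacle to be that a member of such a family may become tangent to $\partial M$ at a point of $\partial S_+$ and so leave $\Gamma^0_M$; because $F_M$ is finite this occurs only at isolated parameter values. To cross one, I would decompose the p-geodesic at that switch point into two sub-geodesics, each lying in $\Gamma^2_M=\overline{\Gamma^1_M}\setminus(\Gamma^c_M\cup\Gamma'_M)$ --- on which $\Phi$ is already continuous --- and varying continuously through the tangency; then $\Phi$ of the p-geodesic, and hence $e$, extends continuously across the bad parameter, so $L_+$ does not jump. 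Applying the identical reasoning to the family $\{X_\theta:\theta\in(0,\pi)\}$ of directions at a point $p\in S_+$ (which runs from $Y_+(p)$ to $Y_-(p)$, both convex), the value $e(\gamma_{X_\theta})$ is the same for $\theta$ near $0$ and for $\theta$ near $\pi$, i.e.\ $L_+(p)=L_-(p)$.

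It remains to patch over a component $C$ of $\partial M$, which is a circle. If $C$ meets $S_-$, then $L_+=L_-=0$ on that arc, hence by local constancy of $L_\pm$ we get $L_+\equiv L_-\equiv 0$ on $C$. Otherwise $C$ avoids $S_-$; it cannot consist only of $S_0$ arcs and switch points, since two $S_0$ arcs cannot meet at a switch point (the curvature of $\partial M$ would then vanish on a whole neighbourhood of it), so $C$ would be a single totally geodesic circle, i.e.\ a closed geodesic --- excluded by hypothesis. Thus $C$ contains a point $p\in S_+$; there $L_+(p)=L_-(p)$ by the previous paragraph, and local constancy propagates this common value around $C$. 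In every case $L_+=L_-$ and is constant on $C$. (In the situation of this subsection the component containing $p_0$ abuts $S_-$ at both ends of $S_1$, so this forces the constant $L$ of this section to be $L_+(p_0)=0$.)
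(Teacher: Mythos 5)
Your outline of the easy parts (existence of the limits at convex directions, $L_+=L_-$ at points of $S_+$ via rotating $X_\theta$ from $\theta\approx 0$ to $\theta\approx\pi$, the endgame on a boundary circle using that a component cannot be a closed geodesic) matches the paper. But the heart of the statement is constancy of $L_\pm$ \emph{across switch points}, and there your argument has a genuine gap: you assert that when the sliding family of maximal geodesics becomes tangent to $\partial M$ at a point of $\partial S_+$, you can split the curve at that switch point into two pieces lying in $\Gamma^2_M$ and conclude that ``$\Phi$ of the p-geodesic, and hence $e$, extends continuously across the bad parameter, so $L_+$ does not jump.'' This is exactly the point that cannot be assumed. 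The paper deliberately removes $\Gamma'_M$ (geodesics tangent at switch points) from the domain of the limit-definition of $\Phi$, because the two one-sided limits of $\Phi(\gamma_t)$ at such a tangency may differ by a closed geodesic of $N$ tangent to $\partial N$; the possible jump of $e$ there is precisely $\pm L_\pm$ of the relevant boundary point, i.e.\ the quantity whose vanishing this whole section is trying to establish. Your splitting does not remove the problem: the combinatorial structure of the maximal geodesic changes discontinuously at the critical parameter (on one side it terminates near the tangency on the convex arc, on the other it continues past it), so continuity of $e$ through the tangency is not a consequence of continuity of $\Phi$ on $\Gamma^2_M$; it is equivalent to the absence of the jump you are trying to rule out. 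In short, you have assumed the conclusion at the only hard step.

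This is why the paper's proof does not argue by direct continuity at all. Instead it fixes a switch point $p_1$, takes nearby points $p_2,p_3$ on either side and an auxiliary point $q$ joined to them by transverse geodesics (so $e(\gamma_2)=e(\gamma_3)$ by Proposition \ref{prop:constante}), and then compares the two pair spaces $Q_M(p_2,q)$ and $Q_M(p_3,q)$: the jumps of $l$ across pairs outside $Q^0_M$ are shown to agree for the two basepoints (equation \eqref{eq:same_diff}), and the circle structure of the component through $(\gamma,\gamma_q)$ and $(\gamma_p,-\gamma)$ from Proposition \ref{prop:reverse} lets one propagate the value of $l$ all the way around and read off $L_+(p_2)=L_+(p_3)$. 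The fact that your proposal never invokes Proposition \ref{prop:reverse} or the pair machinery is the symptom of the missing idea: some mechanism is needed to control (or cancel) the possible jumps of $e$ at tangencies to switch points, and ``$e$ extends continuously'' is not an argument for it.
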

\begin{proof}
  $L_+(p) = L_-(p) = 0$ when both $Y_+(p)$ and $Y_-(p)$ are concave directions.
  It is also obvious that $L_+(p) = L_-(p)$ when both $Y_+(p)$ and $Y_-(p)$ are convex directions.
  (Both are equal to the length of a closed geodesic tangent to the boundary.)
  Also, by the first variation formula, $L_\pm$ are constant near $p$ if $p$ is not a switch point.

  Suppose that $p_1$ is a switch point such that $L_+$ is non-constant near $p_1$.
  Without loss of generality, assume that $Y_+(p_1)$ is a convex direction
  Pick a small open neighborhood of $p_1$ in $\partial M$.
  The neighborhood is separated into two parts by $p_1$
  Pick a point $p_2$ from the part which $Y_+(p)$ points to,
  and a point $p_3$ from the other part.
%  If $\overline\gamma_{Y_+(p)}$ is not a point, then it has two end points $p$ and $p_1$.
%  If $\overline\gamma_{Y_+(p)}$ is a point, let $p_1 = p$.
%  We have $L_+(p_1) = L_+(p)$ and $L_-(p_1) = L_-(p) = 0$.
%  Since $\overline\gamma_{Y_+(p)}$ is maximal,
%  $p_1 \in \partial S_+$.
%  Let $p_2 \in S_+$ be a point that is close to $p_1$,
%  and then we have
%  $L_-(p_2) = L_+(p_2) = L_+(p_1) = L_+(p)$.
%  Let $p_3 \in \partial M \setminus \overline{S_+}$ be another point that is close to the other side of $p_1$,
%  and then we have $L_-(p_3) = 0$.
  Since there are only finite many switch points,
  when $p_2$ and $p_3$ are close enough,
  any geodesic going through $p_2$ or $p_3$, except possibly a geodesic going though $p_3$ and $p_1$, are not tangent $\partial M$ at switch points.

  Pick $q \in \partial M \setminus F_M$ such that there is a geodesic $\gamma_1$ which intersects $\partial M$ transversely at $p_1$ and $q$.
  When $p_2$ and $p_3$ are close enough,
  there is also a geodesic $\gamma_2$ which intersects $\partial M$ transversely at $p_2$ and $q$,
  a geodesic $\gamma_3$ which intersects $\partial M$ transversely at $p_3$ and $q$,
  and a continuous family of geodesics between them which intersect $\partial M$ transversely.
  Then we have $e(\gamma_2) = e(\gamma_3)$ by Proposition \ref{prop:constante}.

    \begin{figure}[h]
      \labellist
      \small\hair 2pt
      \pinlabel $p_2$ [t] at 116 99
      \pinlabel $p_3$ [t] at 155 110
      \pinlabel $\gamma_6$ [b] at 139 108
      \pinlabel $q$ [bl] at 137 191
      \pinlabel $\gamma_2$ [r] at 126 146
      \pinlabel $\gamma_3$ [l] at 147 150
      \endlabellist
      \centering
      \includegraphics[width=0.4\linewidth]{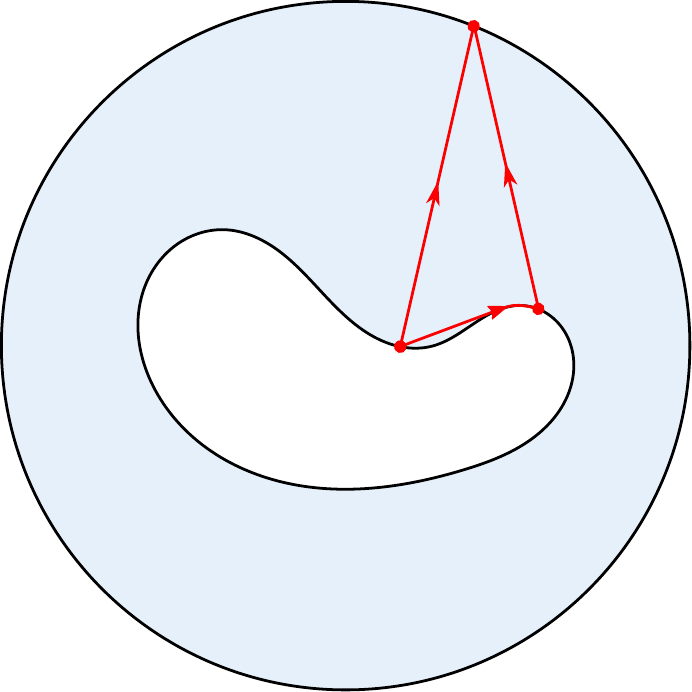}
      \caption{$H_1(\theta_0, \cdot) = H_2(\theta_0, \cdot)$}
      \label{fig:constantL}
    \end{figure}

  For any p-geodesic $\gamma \in \Gamma^p_M$ starting at $p_2$,
  by Proposition \ref{prop:no_conj},
  there is a unique p-geodesic $H(\gamma)$ close to $\gamma$ such that $H(\gamma)(0) = p_3$ and that $H(\gamma)(1) = \gamma(1)$.

  Recall that $l$ is constant on each component of $Q_M^0(p_2, q)$.
  Pick any $(\gamma_4, \gamma_5) \in Q_M(p_2, q) \setminus Q_M^0(p_2, q)$
  where both $\gamma_4$ and $\gamma_5$ are non-constant.

  We claim that $(H(\gamma_4), \gamma_5) \in Q_M(p_3, q) \setminus Q_M^0(p_3, q)$.
  The root of $(\gamma_4, \gamma_5)$ is a switch point
  and either $\gamma_4$ or $\gamma_5$ is tangent to $\partial M$ at that point.
  If $\gamma_5$ is tangent to $\partial M$ at the that switch point,
  then we have
  $(H(\gamma_4), \gamma_5) \in Q_M(p_3, q) \setminus Q_M^0(p_3, q)$.
  If $\gamma_4$ is tangent to $\partial M$ at that switch point,
  then $\gamma_4$ is not a geodesic,
  since we assume that geodesics going through $p_2$ are not tangent to $\partial M$ at switch points.
  Hence the ending part of $\gamma_4$ is a geodesic tangent to $\partial M$ at both ends.
  Therefore, when $p_2$ and $p_3$ are close enough,
  the ending part of $\gamma_4$ and $H(\gamma_4)$ will coincide,
  which implies that $H(\gamma_4)$ is also tangent to $\partial M$ at $\gamma_4(1)$.
  Thus, $(H(\gamma_4), \gamma_5) \in Q_M(p_3, q) \setminus Q_M^0(p_3, q)$.

  Now, consider the two components $I_2$ and $J_2$ of $Q_M(p_2, q) \setminus Q_M^0(p_2, q)$
  which are adjacent to $(\gamma_4, \gamma_5)$, and corresponding components $I_3$ and $J_3$ of
$Q_M(p_3, q) \setminus Q_M^0(p_3, q)$ which are adjacent to $(H(\gamma_4), \gamma_5)$.
  If $\gamma_4$ is tangent to $\partial M$ at the root,
  then the ending part of $\gamma_4$ and $H(\gamma_4)$ coincide,
  which implies that
  \begin{align}
    l(J_2) - l(I_2) = l(J_3) - l(I_3).
    \label{eq:same_diff}
  \end{align}
  If $\gamma_4$ is transverse to $\partial M$ at the root,
  then $H(\gamma_4)$ is also transverse to $\partial M$ if $p_2$ and $p_3$ are close enough,
  so we also have \eqref{eq:same_diff}.

  Let $\gamma_6$ be the shortest p-geodesic from $p_2$ to $p_3$.
  By proposition \ref{prop:reverse},
  $(\gamma_2, \gamma_q)$ and $(\gamma_{p_2}, -\gamma_2)$ are in the
  same component of $Q_M(p_2, q)$ which is homeomorphic to a circle.
  Call that component $Q_2$.
  Since $(\gamma_6, -\gamma_3)$ and $(\gamma_{p_2}, -\gamma_2)$ are close to each other,
  $(\gamma_6, -\gamma_3) \in Q_2$.
  Since $Q_2$ is a circle,
  $(\gamma_6, -\gamma_3)$ and $(\gamma_2, \gamma_q)$ separate $Q_2$ into two parts.
  Denote the part which does not contain $(\gamma_{p_2}, -\gamma_2)$ by $U_2$.
  Here $U_2$ does not contain end points.

  By proposition \ref{prop:reverse},
  $(\gamma_3, \gamma_q)$ and $(\gamma_{p_3}, -\gamma_3)$ separate the component of $Q_M(p_3, q)$ which contains them into two parts.
  Denote the part which does not contain $(-\gamma_6, -\gamma_2)$ by $U_3$.
  Here $U_3$ does not contain end points.

  Let $V_2$ be the component of $U_2 \setminus Q_M^0(p_2, q)$
  which is adjacent to $(\gamma_2, \gamma_q)$,
  and $V_3$ be the component of $U_3 \setminus Q_M^0(p_3, q)$
  which is adjacent to $(\gamma_3, \gamma_q)$.
  Then $l(V_2) = e(\gamma_2) - L_\pm(q) = e(\gamma_3) - L_\pm(q) = l(V_3)$.
  Note that choice between $L_+(q)$ and $L_-(q)$ depends on the orientation of the boundary,
  but it does not affect the computation here.
  Let $V_2'$ be the component of $U_2 \setminus Q_M^0(p_2, q)$
  which contains $(\gamma_6, -\gamma_3)$,
  and $V_3'$ be the component of $U_3 \setminus Q_M^0(p_3, q)$
  which is adjacent to $(\gamma_{p_3}, -\gamma_3)$.
  By \eqref{eq:same_diff},
  $l(V_2') = l(V_2) = l(V_3) =  l(V_3')$.
  Notice that $l(V_3) = L_+(p_3) - e(\gamma_3)$
  but $l(V_3') = e(\gamma_6) - e(\gamma_3) = L_+(p_2) - e(\gamma_3)$.
  Hence $L_+(p_2) = L_+(p_3)$.
  Hence $L_+$ is constant on each component of $\partial M$.
  Similarly, $L_-$ is also constant on each componnet of $\partial M$.
  Since $M$ has no trapped geodesics,
  there is always a point $p$ on each component of $\partial M$ such that $\partial M$ is either strictly convex near $p$ or strictly concave near $p$,
  which implies that
  $L_+(p) = L_-(p)$.
  Hence $L_+ = L_-$ and they are constant on each component of $\partial M$.
\end{proof}

From now, we shall replace both $L_+$ and $L_-$ by $L$.
When $S_1$ is a curve with two ends,
then the component of $\partial M$ which contains $S_1$ must contain a concave part,
and thus $L = 0$ on $S_1$.
Hence Proposition \ref{prop:no_trapped} holds.

\subsection{$S_1$ is closed}

Now we assume that $S_1$ is closed.

If every geodesic ray starting at $p_0$ intersects $\partial M$ only on $S_1$,
then the old argument for simple manifolds works automatically.
Assume that there is a geodesic ray starting at $p_0$ intersects $\partial M$ at a point on $\partial M \setminus S_1$.
Assume that $L > 0$.

  Fix an orientation on $\partial N$ and let $X_0(x)$
  be the unit vector tangent to $\partial N$ at $x \in \partial N$
  such that $X_0(x)$ and $\partial N$ have the same orientation.
  Let $h_1 : \real / \integer \rightarrow h(S_1)$
  be an orientation preserving diffeomorphism.

  For each $x \in \partial N$ and $\theta \in (0, \pi)$, let $X_\theta(x) \in \partial_+ \Omega N$ be the unit tangent vector
  such that the angle between $X_0(x)$ and $X_\theta(x)$ is $\theta$.
  For each $\theta \in (-\pi, 0)$, put $X_\theta(x) = - X_{\pi - \theta}(x)$.
  Pick a small $\delta_1 > 0$ such that $\gamma_{X_\theta(x)}$ intersects $\partial N$ transversely
  for any $x \in h(S_1)$ and $\theta \in (0, \delta_1)$.
  Pick a small $\delta \in (0, \delta_1)$
  Write $Y_1(x) = X_\delta(x)$.
  Define $T : h(S_1) \rightarrow h(S_1)$ as $T(x) := \pi(\alpha_N(Y_1(x)))$.
  Here we choose $\delta$ small enough such that the two end points of $\gamma_{X_\delta(x)}$
  move in the same direction as $x$ moves on $h(S_1)$.
  In other words, $T$ is a homeomorphism.

  Let $Y_2(x) = \alpha_N(Y_1(T^{-1}(x)))$.
  Then $Y_1(x)$ and $Y_2(x)$ separate the circle $\Omega_x N$
  into two segments.
  Let $A(x)$ be the segment containing $X_0(x)$ (which is the shorter segment).
  Then $A = \bigcup_{x \in \partial N} A(x)$ is an annulus with boundaries
  $Y_1(\partial N)$ and $Y_2(\partial N)$.
  Thus we can can break $A$ down to a family of \emph{disjoint} curves
  $\eta_x : [0, 1] \rightarrow A$
  from $Y_1(x)$ to $Y_2(T(x)) = \alpha_N(Y_1(x))$.
  See Figure \ref{fig_eta}.
  \begin{figure}[h!]
      \center
      \includegraphics[width=0.4 \textwidth]{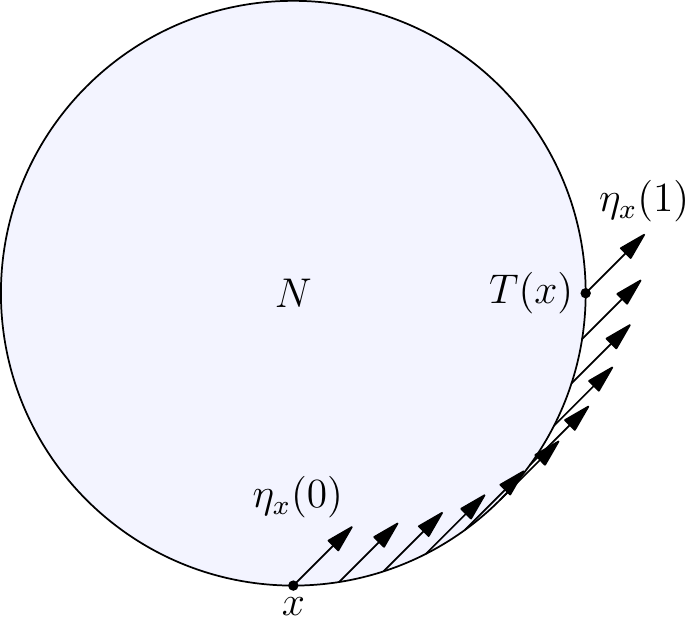}
      \caption{Values of $\eta_x$ from $0$ to $1$}
      \label{fig_eta}
  \end{figure}
  \begin{defn}
    The \emph{unit tangent vector field}  of
    a smoothly immersed curve $\gamma$ on any
    Riemannian surface $N^2$ (possibly with boundary) is a smoothly immersed curve
    $\tilde{\gamma}$ in $\Omega N$ defined as
    \begin{align}
      \tilde{\gamma}(t) = \left(\gamma(t), \frac{\gamma'(t)}{|\gamma'(t)|}\right).
      \label{eq:unit_tangent}
    \end{align}
  \end{defn}

  \begin{defn}
    Let $P : \Omega N \rightarrow P \Omega N$ be the quotient map
    on the unit tangent bundle which identifies the opposite vectors based
    at the same point.
    For any smoothly immersed curve
    $\gamma$ in $N^2$,
    $P \circ \tilde{\gamma}$ is called
    the \emph{projectivized unit tangent vector field} (or the \emph{tangent line field}) of $\gamma$.
  \end{defn}

  \begin{figure}[h!]
      \center
      \includegraphics[width=0.4 \textwidth]{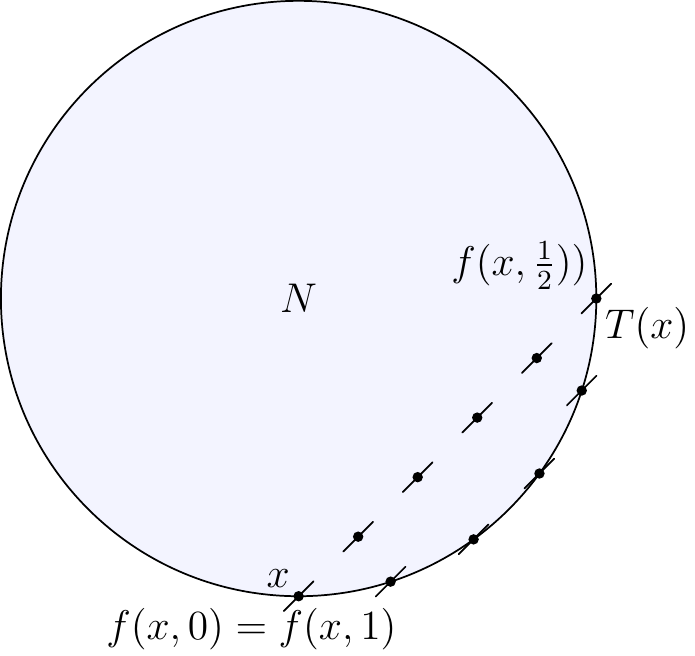}
      \caption{Values of $f(x, \cdot)$ from $0$ to $1$. This might be deceiving since $\gamma_{Y_1(x)}(2t)$ should have self-intersections.}
      \label{fig_f}
  \end{figure}
  Define
  \begin{align*}
      f : \partial N \times \real / \integer \rightarrow P \Omega N
  \end{align*}
  as
  \begin{align*}
      f(x, t) =
      \begin{cases}
	P\left(\tilde\gamma_{Y_1(x)}(2 t)\right) & \text{if $0 \le t \le \frac{1}{2}$,}\\
	P\left(\eta_x(2 - 2t)\right) & \text{if $\frac{1}{2} \le t \le 1$.}
      \end{cases}
  \end{align*}
  See Figure \ref{fig_f}.
  \begin{prop}
    $f : \partial N \times \sphere \rightarrow P \Omega N$ is an embedding.
  \end{prop}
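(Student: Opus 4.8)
The plan is to check first that $f$ is well defined and continuous, then that it is injective; since $\partial N\times\sphere$ is compact and $P\Omega N$ is Hausdorff, a continuous injection is automatically a homeomorphism onto its image, so this already yields a (topological) embedding, and that $f$ is moreover a piecewise-smooth immersion is immediate from the transversality built into the construction. For well-definedness, the two branches agree where they overlap: at $t=\tfrac12$ the first branch returns the projectivised terminal unit tangent of $\gamma_{Y_1(x)}$, namely $P(\alpha_N(Y_1(x)))$, which equals $P(Y_2(T(x)))=P(\eta_x(1))$; and at $t\equiv 0$ both branches give $P(Y_1(x))$. Continuity in $x$ holds because $\gamma_{Y_1(x)}$ depends continuously on $x$ — the choice $\delta<\delta_1$ makes $\gamma_{X_\delta(x)}$ meet $\partial N$ transversally, so the exit time varies continuously — and because the decomposition $\{\eta_x\}$ of the annulus $A$ was chosen continuously.

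For injectivity I would split the image into the geodesic block $G:=f(\partial N\times[0,\tfrac12])$ and the annulus block $f(\partial N\times[\tfrac12,1])=P(A)$. For $t\in(0,\tfrac12)$ the line $f(x,t)$ is based at an \emph{interior} point of $N$, because $\gamma_{Y_1(x)}$, being transverse to $\partial N$, meets $\partial N$ only at its two endpoints; on the other hand every point of $P(A)$ is based at a \emph{boundary} point. Hence $G\cap P(A)\subseteq f(\partial N\times\{0,\tfrac12\})=P(Y_1(\partial N))\cup P(Y_2(\partial N))=P(\partial A)$. On the annulus block the arcs $\eta_x$ are pairwise disjoint and sweep out $A$; each $A(x)$ is the shorter of the two arcs of $\Omega_x N$ cut off by $Y_1(x)$ and $Y_2(x)$ (in particular these are not antipodal and $A(x)$ is shorter than a semicircle), so $A$ contains no antipodal pair of vectors and $P|_A$ is injective. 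Consequently $f$ is injective on $\partial N\times[\tfrac12,1]$, the two ends $P(Y_1(\partial N))$ and $P(Y_2(\partial N))$ of the embedded annulus $P(A)$ are disjoint, each is the injective image of a circle (as $Y_1$ and $\alpha_N\circ Y_1$ are injective and $P$ is injective on $A$), and $f^{-1}(P(\partial A))=\partial N\times\{0,\tfrac12\}$ with $f$ injective there.

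It remains to rule out collisions inside $G$. If $f(x,t)=f(x',t')$ with $t,t'\in[0,\tfrac12]$, the geodesics $\gamma_{Y_1(x)}$ and $\gamma_{Y_1(x')}$ pass through a common point with a common tangent line, so by uniqueness of geodesics they trace the same unoriented geodesic of $N$. A geodesic segment with both endpoints on $\partial N$ cannot meet itself at two distinct parameters with a common tangent line: equal direction would make it a closed geodesic that never returns to $\partial N$, while opposite direction would force its (unit-speed) velocity to vanish at the midpoint. Hence $x=x'$ gives $t=t'$. If $x\ne x'$, then comparing the endpoint pairs $\{x,T(x)\}=\{x',T(x')\}$ forces $x'=T(x)$, $T^2(x)=x$, and $Y_1(T(x))=-\alpha_N(Y_1(x))$, that is $X_\delta(T(x))=-\alpha_N(X_\delta(x))$. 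Granting that this cannot hold for $\delta$ small, $G$ has no internal collisions; and since any collision between $G$ and $P(A)$ must occur on $P(\partial A)$, where $f$ is already injective, $f$ is globally injective, hence an embedding.

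The remaining point — the identity $X_\delta(T(x))=-\alpha_N(X_\delta(x))$ — is the main obstacle. It says that some geodesic of $N$ enters $\partial N$ at $x$ at angle $\delta$ and leaves at $T(x)$ in the reversed, equally near-tangential direction; equivalently, the swap of the endpoints carries $\gamma_{X_\delta(x)}$ to another geodesic of the same family. I would dispose of it using that $\delta$ was chosen small: as $\delta\to0$, $X_\delta(x)\to X_0(x)$ and $\alpha_N(X_\delta(x))$ converges to the terminal unit tangent of the boundary-tangent geodesic $\gamma_{X_0(x)}$, and by the structure of $\partial N$ near $h(S_1)$ established in the previous sections — totally geodesic over $h(S_0\cap S_1)$, and, in the case under analysis, carrying only the single closed geodesic of length $L$ tangent to $\partial N$ at $h(p_0)$ — this limiting exit direction is not $-X_0(T(x))$, so for all sufficiently small $\delta$ the identity fails. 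This is the only place in the proof where the standing assumptions on $M$ (no trapped geodesics, finite $F_M$) and the equal-scattering-data identifications of the earlier sections are genuinely used; everything else is point-set topology together with uniqueness of geodesics.
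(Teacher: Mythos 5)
Your overall architecture is fine and is essentially the argument one expects here (the paper itself gives no details, deferring to the proof of Proposition 4.2 of \cite{We}): well-definedness and continuity of $f$, compactness of $\partial N\times\real/\integer$ plus injectivity giving the embedding, the split into the geodesic block and the annulus block with interior basepoints versus boundary basepoints, injectivity of $P$ on $A$ from the ``shorter segment'' property, and the reduction of collisions inside the geodesic block, via uniqueness of geodesics and transversality of the endpoints, to the single reversal identity $X_\delta(T(x))=-\alpha_N(X_\delta(x))$. (One small caution: phrase the ``equal direction'' subcase via the transverse endpoints rather than ``a closed geodesic that never returns to $\partial N$'' --- nothing forbids closed geodesics in $N$, only in $M$; what kills this case is that the periodic extension would have to pass through a boundary point transversally at an interior time, or force $Y_1(x)=\alpha_N(Y_1(x))$ with one vector inward and the other outward.)

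The genuine soft spot is your closing argument for the reversal identity, which you yourself flag as the main obstacle. As written it does not work: you appeal to ``the structure of $\partial N$ near $h(S_1)$ established in the previous sections,'' but the earlier propositions control $\partial N$ only along $h(\overline{S_-})$ and $h(S_0)$; near the strictly convex points of $S_1$ (in particular near $h(p_0)$) the geometry of $\partial N$ is exactly what this section is trying to determine, and the closed geodesic of length $L$ tangent at $h(p_0)$ is the contradiction hypothesis, not a tool that excludes the identity. Moreover $\alpha_N$ does not extend continuously to tangential directions in this situation, so ``the limiting exit direction of $\gamma_{X_0(x)}$'' is not a safe object, and you would in any case need uniformity in $x$ while $T$ also depends on $\delta$. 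The quickest correct finish uses a fact you already invoked: the construction stipulates that $A(y)$ is the \emph{shorter} arc of $\Omega_y N$ bounded by $Y_1(y)$ and $Y_2(y)$, so these two vectors are never antipodal; but the reversal identity says precisely $Y_1(T(x))=-\alpha_N(Y_1(x))=-Y_2(T(x))$, an antipodal pair at $y=T(x)$ --- contradiction. (If instead one wants to justify this non-antipodality itself, the correct source is not the geometry of $N$ but the equality of scattering data: the exit angles of $\gamma_{X_\delta(x)}$ in $N$ agree with those of the corresponding geodesics in $M$, and in $M$, since $S_1$ contains no strictly concave points, finiteness of $F_M$ and absence of trapped geodesics force geodesics entering at small angle $\delta$ to exit nearly tangentially in the forward direction, so the reversed exit vector is close to $-X_0(T(x))$ and cannot equal $X_\delta(T(x))$ for small $\delta$.) With that replacement your proof closes; as submitted, the crucial step rests on an appeal to information about $\partial N$ that is not available.
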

  \begin{proof}
    The proof is the same as the proof of \cite[Proposition 4.2]{We}.
  \end{proof}

  \begin{prop}
    $f(x, \cdot)$ is contractible in $P \Omega N$.
  \end{prop}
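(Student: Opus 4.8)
The plan is to exhibit an explicit contraction of the loop $f(x,\cdot)$ inside $P\Omega N$ by "retracting" the geodesic piece and the $\eta_x$ piece simultaneously toward the base point. First I would fix $x$ and recall the structure of the loop: on $[0,\tfrac12]$ it runs along the tangent line field of the geodesic $\gamma_{Y_1(x)}$ from the boundary vector $Y_1(x)$ out to its other endpoint $\alpha_N(Y_1(x))$, and on $[\tfrac12,1]$ it runs backward along $\eta_x$, which by construction lies in the annulus $A\subset\Omega N$ and connects $\alpha_N(Y_1(x))$ back to $Y_1(x)$. So $f(x,\cdot)$ is the concatenation of the projectivized tangent line field of a single geodesic arc with a curve lying in a tame piece of $\Omega N$. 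The key observation is that a single geodesic arc $\gamma_{Y_1(x)}$ lifts to an embedded arc in $\Omega N$ (its unit tangent field is an immersed, in fact embedded once lifted to the universal cover, curve with no nontrivial loop), so its projectivized tangent line field is null-homotopic \emph{in $P\Omega N$ rel endpoints} once we record where the endpoints go; and the $\eta_x$ piece sweeps out the annulus $A$, which deformation retracts onto either boundary circle.

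The key steps, in order, are: (i) For each $\theta\in[\delta,\pi-\delta]$ (say), let $Y_\theta(x)=X_\theta(x)$ as in the construction preceding the statement, and form the loop $f_\theta(x,\cdot)$ built from $\gamma_{Y_\theta(x)}$ and the corresponding curve $\eta^\theta_x$ in the annulus $A_\theta$ bounded by $Y_\theta(\partial N)$ and $\alpha_N(Y_\theta(\partial N))$; one checks, using Proposition~\ref{producthomeo} (so that the geodesics and their scattering vary continuously) and the no-trapped-geodesics hypothesis (so lengths stay bounded), that $\theta\mapsto f_\theta(x,\cdot)$ is a homotopy of loops in $P\Omega N$. (ii) As $\theta\to\tfrac{\pi}{2}$, or rather as one pushes $\theta$ through the whole range and observes the geodesic shrinking — more precisely, I would instead let the \emph{base point} $p_0$ move and simultaneously let $\theta\to 0$, so that $\gamma_{Y_\theta(x)}$ shrinks to a point: using that $\delta<\delta_1$ was chosen so $\gamma_{X_\theta(x)}$ meets $\partial N$ transversely for small $\theta$, the arc $\gamma_{Y_\theta(x)}$ has length tending to $0$, its tangent-line-field piece degenerates to a constant, and the annulus $A_\theta$ collapses onto the circle of boundary-tangent directions — (iii) finally, that residual loop lies in (a neighborhood of) $\partial_0\Omega N$ over a single point of $\partial N$, which is a circle's worth of directions at a point, hence contractible in $P\Omega N$ since $P$ identifies antipodal directions and the fiber circle over a point maps to a contractible loop.

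The main obstacle is step (i)–(ii): justifying that the degeneration is a genuine homotopy \emph{all the way down}, rather than a homotopy that breaks when the geodesic $\gamma_{Y_\theta(x)}$ first becomes tangent to $\partial N$ (which is exactly the phenomenon that $L\ne 0$ is meant to detect). One must argue that for $\theta$ in the transverse range $(0,\delta_1)$ no such tangency occurs, so the family $f_\theta$ is continuous on that whole range, and then that the $\theta\to 0$ limit exists by the compact-open-topology convergence of the $\gamma_{Y_\theta(x)}$ and of the $\eta^\theta_x$ (the latter because the annuli $A_\theta$ vary continuously and degenerate controllably). I would also need to be a little careful that the homotopy can be taken \emph{continuous in $x$} as well — but since all the constructions ($Y_1$, $T$, $A$, $\eta_x$) were already arranged to depend continuously on $x\in h(S_1)$, this is inherited. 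Once continuity is in hand, contractibility of the limiting loop is immediate, and this is enough since $P\Omega N$ is not simply connected in general only because of the fiber $P\Omega_x N\cong\mathbb{RP}^1$, whose generator is precisely the loop we end up with, which bounds in $P\Omega N$ by spinning the direction around through the interior.
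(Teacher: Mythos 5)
Your plan breaks down at exactly the point the whole section is designed to handle. In step (ii) you contract the loop by letting $\theta\to 0$ and asserting that the arc $\gamma_{Y_\theta(x)}$ shrinks to a point so that its tangent-line-field piece degenerates to a constant. That is only true if $\partial N$ is convex at the base point, i.e.\ if $L=0$ --- but the standing assumption in this part of the argument is $L>0$, precisely the situation in which, as $\theta\to 0$, the geodesics $\gamma_{X_\theta(x_0)}$ converge not to a point but to a closed geodesic of length $L$ tangent to $\partial N$ at $h(p_0)$. So the degeneration you rely on is equivalent to the conclusion being proved, and the homotopy you describe does not exist in the case of interest. Your fallback in step (iii) is also false: the residual loop you would obtain is (a multiple of) the fiber of $P\Omega N\to N$, and for a compact surface with boundary the unit tangent bundle is homotopy equivalent to $N\times \sphere$, so the fiber class has infinite order in $\pi_1(P\Omega N)$; it does not ``bound by spinning the direction around through the interior.'' Indeed, if the fiber were null-homotopic the subsequent contradiction with the isotopic nontriviality of $f(x_0,\cdot)$ could never get off the ground.

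The paper's proof goes the other way around the obstruction: it uses the assumed geodesic ray from $p_0$ meeting $\partial M\setminus S_1$ to produce the first tangency angle $\theta_0$ and tangency point $p_1$, sweeps $\theta$ \emph{upward} from $\delta$ to $\theta_0$ through the transverse range (homotopy $H_1$), and then deforms further ($H_2$) using the continuous family of geodesics $\gamma_\theta$ joining the moving endpoint $\pi(\alpha_N(X_\theta(p_0)))$ to the fixed point $h(p_1)$, which exists and varies continuously because the transverse geodesics sweep out a simply connected convex region and there are no conjugate points. The terminal loop $H_2(0,\cdot)$ lies along a single geodesic $\gamma_0$ together with arcs of directions at $x_0$, and is of the form $\beta$ followed by $-\beta$, hence contractible --- no appeal to shrinking the geodesic or to triviality of the fiber is needed. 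To repair your argument you would need this auxiliary tangency structure (or some substitute for it); as written, steps (ii) and (iii) are genuine gaps.
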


  \begin{proof}
    Put $x_0 = h(p_0)$.
    Let $\theta_0$ be the smallest positive number such that
    $\gamma_{\Phi^{-1}(X_{\theta_0})}$ is tangent to $\partial M$ at a point $p_1 \in \partial M \setminus S_1 $.
    For any $\theta \in (0, \theta_0)$,
    $\gamma_{\Phi^{-1}(X_{\theta})}$ intersects $\partial M$ transversely.
    Hence $\gamma_{\Phi^{-1}(X_{\theta})}$ is a continuously varying family of disjoint curves,
    which implies that their union is a simply connected convex region.
    Thus each geodesic ray starting at $p_1$ intersects $S_1$ transversely because there are no conjugate points.
    For each $\theta \in (0, \theta_0)$,
    let $\gamma_\theta : [0, 1] \rightarrow N$ be the
    geodesic from $\pi(\alpha_N(X_\theta(p_0)))$ to $h(p_1)$.
    Also define $\gamma_0$ and $\gamma_{\theta_0}$ by taking limits.
    Let $a(\theta)$ be the angle between $\gamma_{\theta_0}$ and $\gamma_\theta$.

    We will construct two homotopies
    $H_1 : [\delta, \theta_0] \times \real / \integer \rightarrow \Omega N$
    and
    $H_2 : [0, \theta_0] \times \real / \integer \rightarrow \Omega N$.
    $P \circ H_1(\delta, \cdot)$ will be homotopic to $f(x_0, \cdot)$.
    $H_1(\theta_0, \cdot)$ and $H_2(\theta_0, \cdot)$ will be the same.
    Then we will show that $H_2(0, \cdot)$ is contractible,
    which will imply that $f(x_0, \cdot)$ is contractible.

    \begin{figure}[h]
      \centering
      \includegraphics[width=0.4\linewidth]{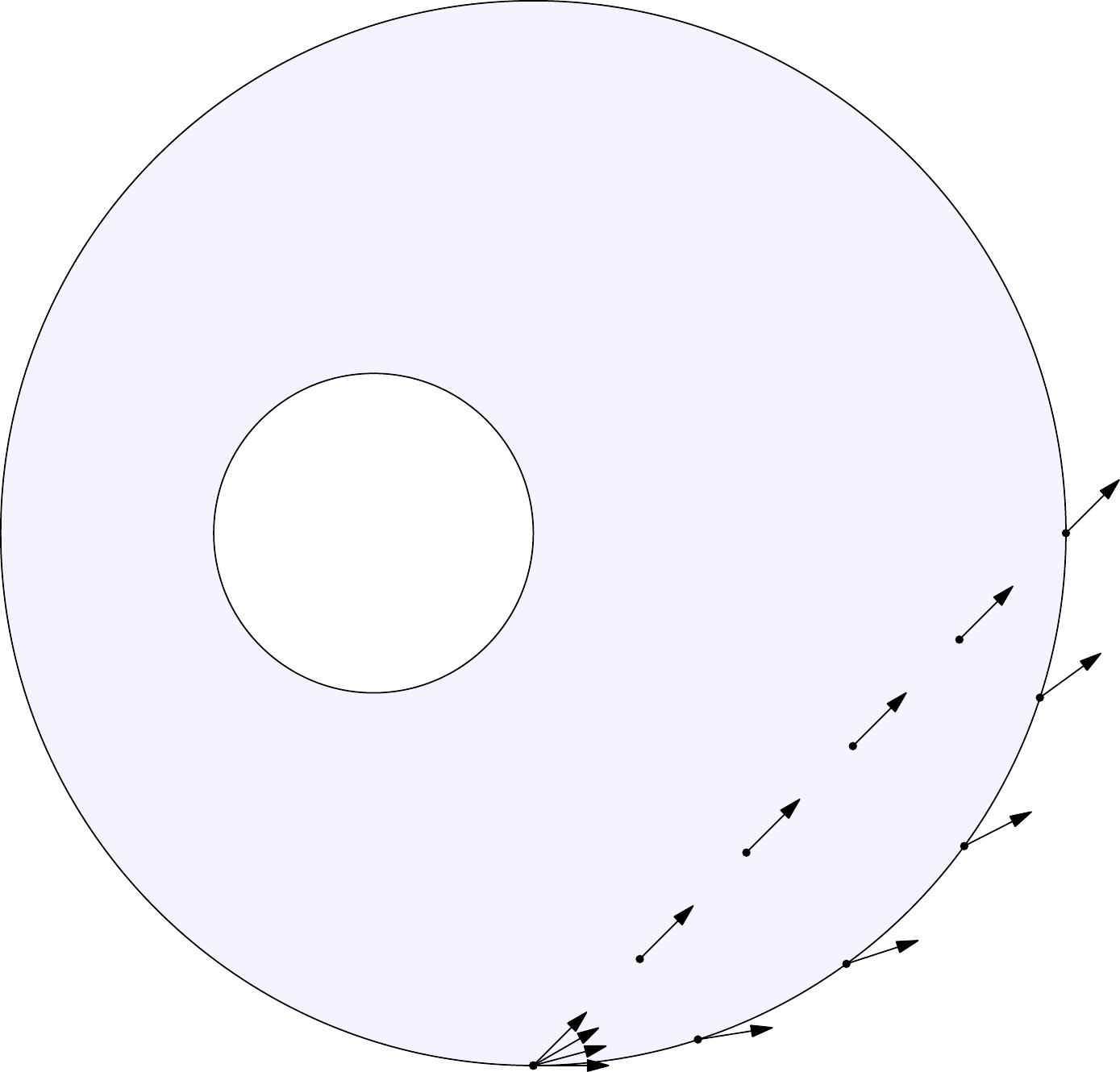}
      \caption{This is $H_1(\delta, \cdot)$. $P \circ H_1(\delta, \cdot)$ is homotopic to $f(x_0, \cdot)$}
      \label{fig:81}
    \end{figure}
    \begin{figure}[h]
      \labellist
      \small\hair 2pt
      \pinlabel $h(p_1)$ [l] at 186 188
      \endlabellist
      \centering
      \includegraphics[width=0.4\linewidth]{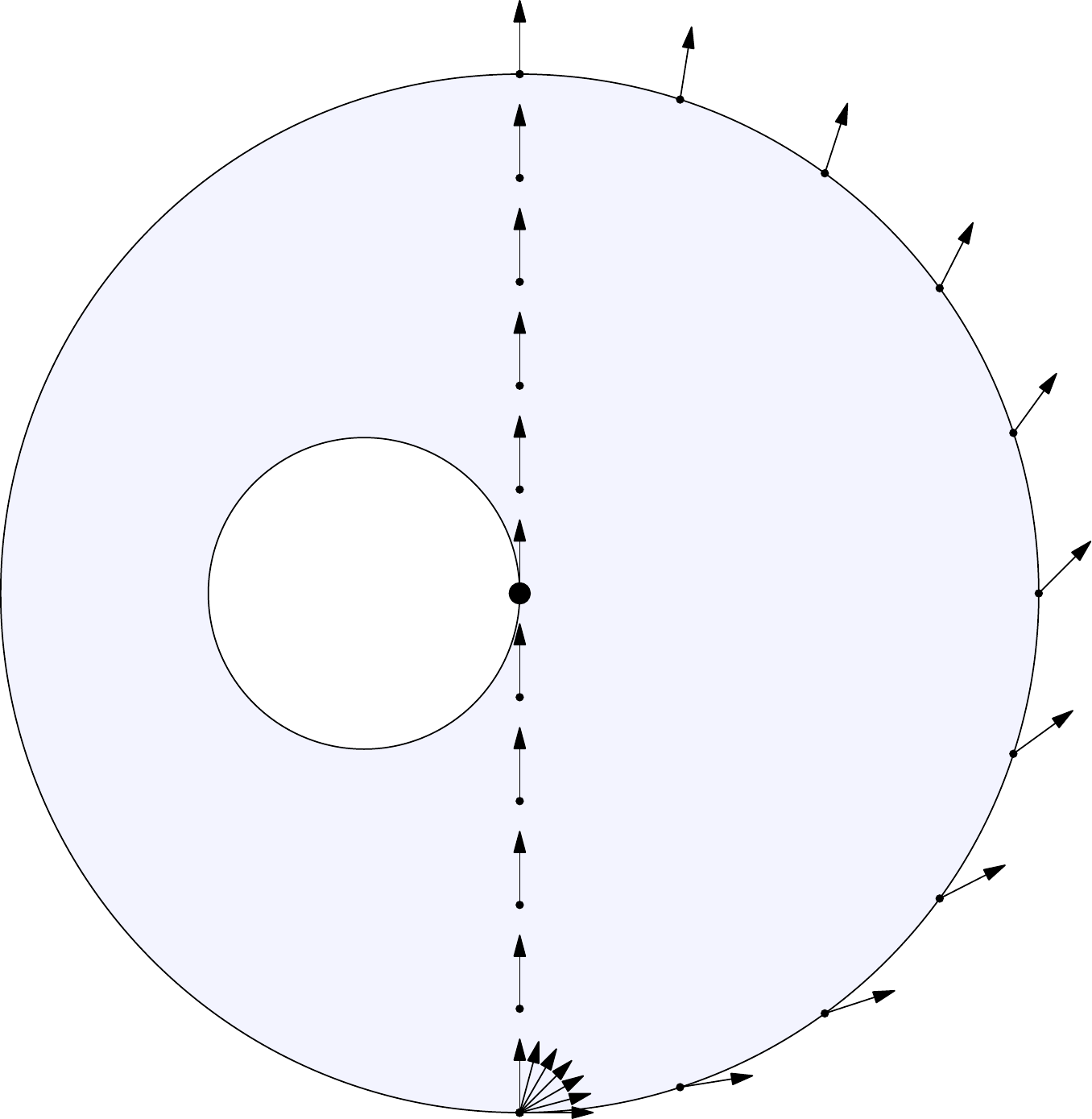}
      \caption{$H_1(\theta_0, \cdot) = H_2(\theta_0, \cdot)$}
      \label{fig:81}
    \end{figure}
    \begin{figure}[h]
      \centering
      \includegraphics[width=0.4\linewidth]{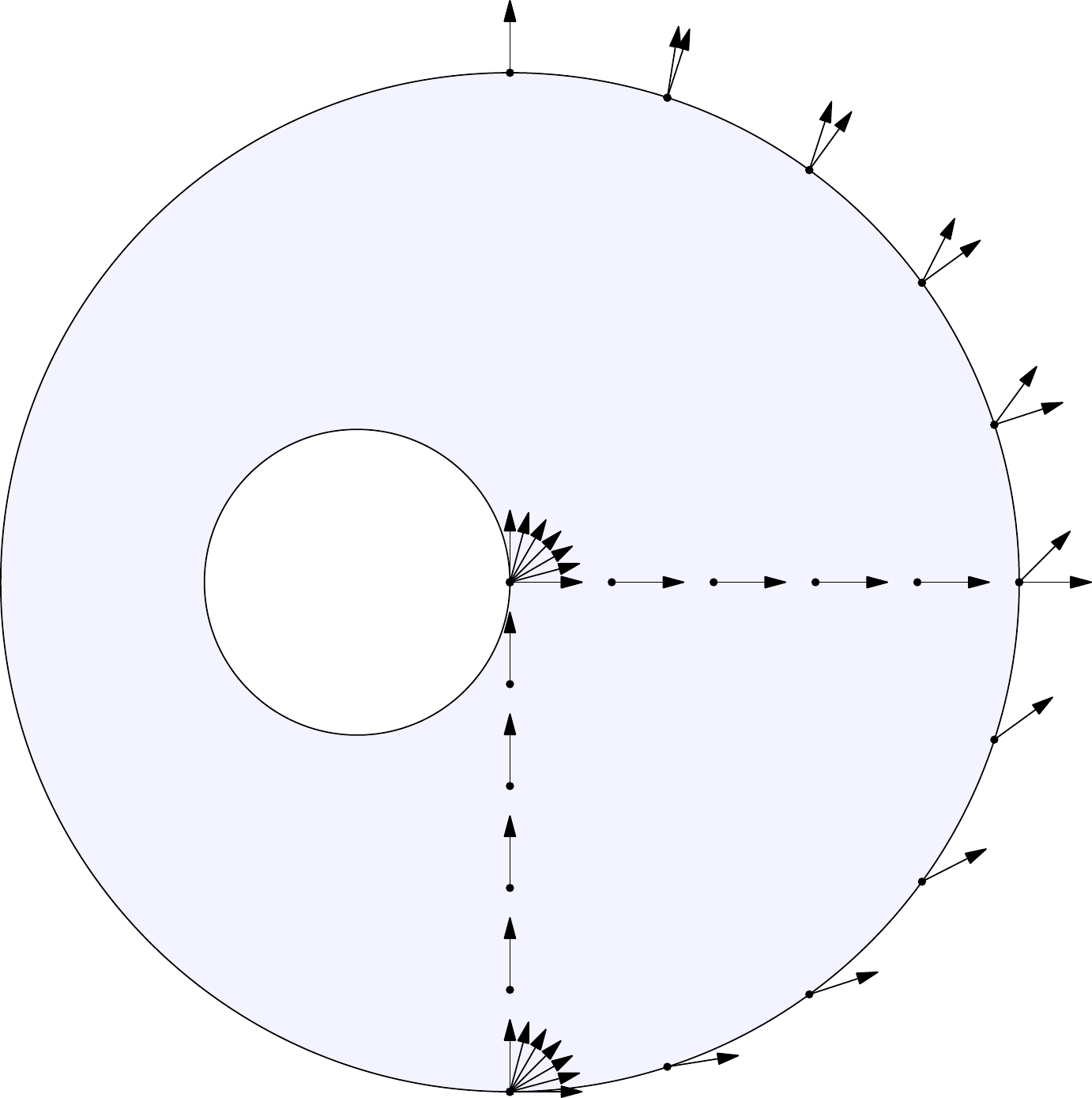}
      \caption{$H_2(\frac{\theta_0}{2}, \cdot)$}
      \label{fig:81}
    \end{figure}
    \begin{figure}[h]
      \centering
      \includegraphics[width=0.4\linewidth]{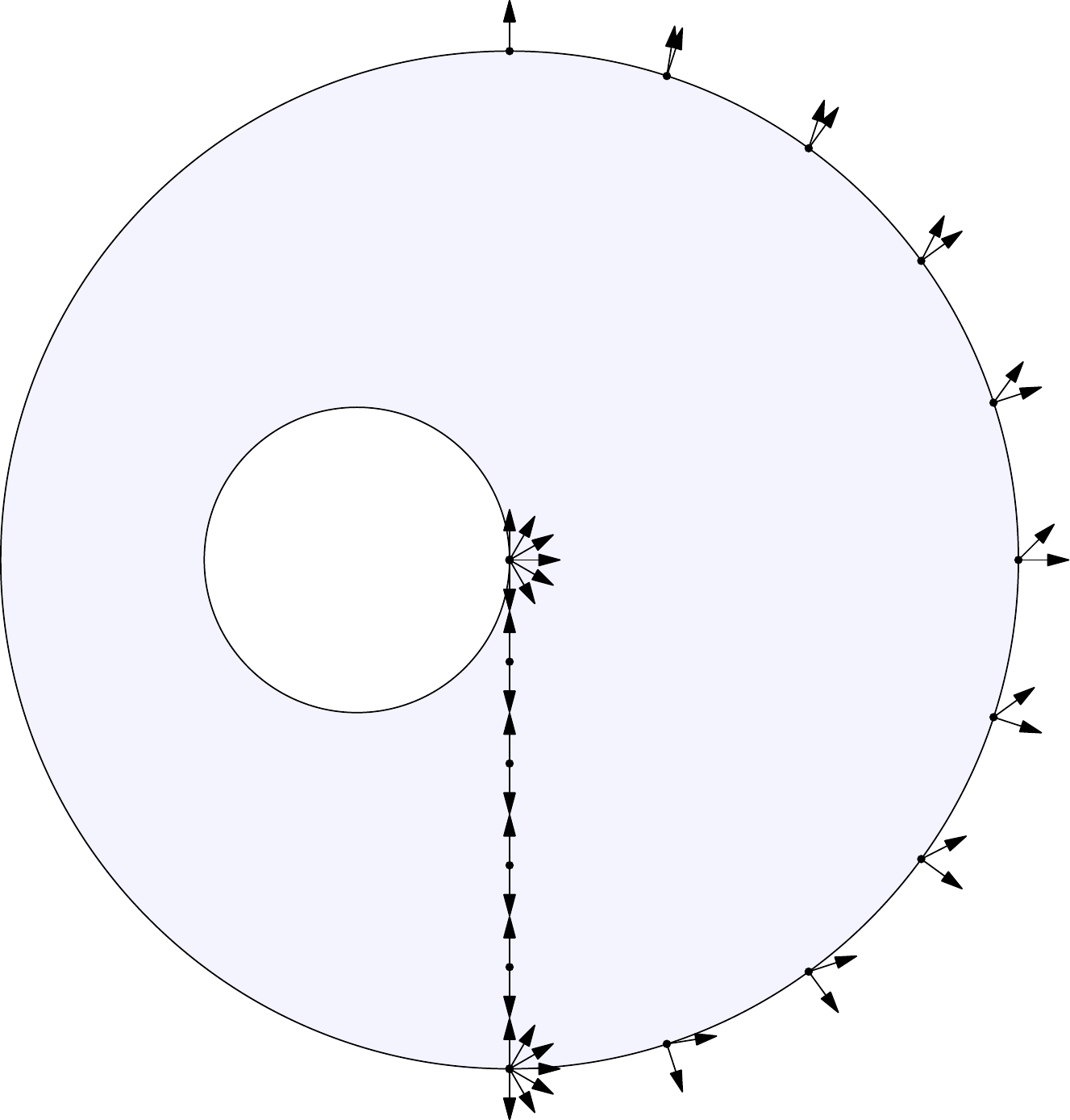}
      \caption{$H_2(0, \cdot)$ is contractible in this graph since no vector points to the left.}
      \label{fig:81}
    \end{figure}

    Consider the homotopy
    $H_1 : [\delta, \theta_0] \times \real / \integer \rightarrow \Omega N$ defined as
    \begin{align*}
      H_1(s, t) =
      \begin{cases}
	\tilde\gamma_{X_s}(3 t) & \text{if $0 \le t \le \frac{1}{3}$,}\\
	\alpha_N( X_{(2 - 3t)s}(x_0)) & \text{if $\frac{1}{3} \le t \le \frac{2}{3}$,}\\
	X_{(3t -2)s}(x_0) & \text{if $\frac{2}{3} \le t \le 1$.}\\
      \end{cases}
    \end{align*}
    Here $P \circ H_1(\delta, \cdot)$ is homotopic to $f(x_0, \cdot)$.

    Define $H_2 : [0, \theta_0] \times \real / \integer \rightarrow \Omega N$ as
    \begin{align*}
      H_2(s, t) =
      \begin{cases}
	\tilde\gamma_0(5 t) & \text{if $0 \le t \le \frac{1}{5}$,}\\
	-\tilde\gamma_{\theta_0 - (6t - 1)(\theta_0 - s)}(1)& \text{if $\frac{1}{6} \le t \le \frac{2}{6}$,}\\
	-\tilde\gamma_{s}(3 - 6 t) & \text{if $\frac{2}{6} \le t \le \frac{3}{6}$,}\\
	-\tilde\gamma_{\theta_0 - (4 - 6t)(\theta_0 - s)}(0) & \text{if $\frac{3}{6} \le t \le \frac{4}{6}$,}\\
	\alpha_N( X_{(5 - 6t)\theta_0}(x_0)) & \text{if $\frac{4}{6} \le t \le \frac{5}{6}$,}\\
	X_{(6t -5)\theta_0}(x_0) & \text{if $\frac{5}{6} \le t \le 1$.}\\
      \end{cases}
    \end{align*}
    Here $H_2(\theta_0, \cdot)$ is a reparametrization of $H_1(\theta_0, \cdot)$.

    Notice that $H_2(0, \cdot) |_{(0, \frac{1}{2})}$ lies entirely on $\gamma_0$.
    Hence we can homotope $H_2(0, \cdot) |_{[0, \frac{1}{2}]}$
    to a curve in $\Omega_{x_0} N$.
    More precisely,
    $H_2(0, \cdot) |_{[0, \frac{1}{2}]}$ is homotopic to
    $\beta : [0, \frac{1}{2}] \rightarrow \Omega_{x_0} N$ defined as
    $\beta(t) = X_{(1 - 4t)\theta_0}(x_0)$.
    Similarly,
    $H_2(0, \cdot) |_{(\frac{1}{2}, 1)}$ is homotopic to $-\beta$.
    Hence $H_2(0, \cdot)$ is contractible.
    Since $P \circ H_2(0, \cdot)$ is homotopic to $f(x_0, \cdot)$,
    $f(x_0, \cdot)$ is contractible in $P \Omega N$.
  \end{proof}

  We can prove the following proposition using the same proof of \cite[Proposition 4.4]{We}.
  \begin{prop}
    $f(x_0, \cdot)$ is isotopically trivial in $P \Omega N$.
    \label{prop:trivial'}
  \end{prop}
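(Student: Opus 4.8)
The goal is to upgrade the statement ``$f(x_0,\cdot)$ is contractible in $P\Omega N$'' to ``$f(x_0,\cdot)$ is \emph{isotopically trivial}'', i.e.\ that the embedded circle $f(x_0,\cdot)$ (which exists by the embedding proposition) bounds an embedded disk in $P\Omega N$ — and the excerpt explicitly tells us the proof is the same as \cite[Proposition 4.4]{We}. So the plan is to reproduce that argument, adapting the notation of \cite{We} to the present setting.

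The plan is as follows. First I would recall that $P\Omega N$, the projectivized unit tangent bundle of the orientable surface $N$, is a $3$--manifold, and that an embedded circle in a $3$--manifold which is null-homotopic is null-isotopic (bounds an embedded disk) provided the ambient $3$--manifold is irreducible, or more directly via the loop theorem / Dehn's lemma once one knows the circle is homotopically trivial and lies on a suitable surface. Concretely, in \cite{We} the circle $f(x_0,\cdot)$ is shown to lie on the embedded torus $f(\partial N\times \sphere)$ (here using the embedding proposition proved just above), and on that torus $f(x_0,\cdot)$ is one of the defining $\{x\}\times\sphere$ curves; the contractibility in $P\Omega N$ of this curve, together with an application of the loop theorem to the torus $f(\partial N\times\sphere)$ inside $P\Omega N$, produces an embedded compressing disk, and a standard argument (the curve is primitive on the torus, so a single compression suffices) shows this disk can be taken with boundary exactly $f(x_0,\cdot)$. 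That is precisely the content of \cite[Proposition 4.4]{We}, and I would simply cite it after checking the hypotheses match: $N$ is a compact orientable surface with boundary, $f$ is an embedding of $\partial N\times\sphere$, and $f(x_0,\cdot)$ is contractible — all established.

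The one point that needs care, and which I expect to be the main obstacle, is verifying that the hypotheses of \cite[Proposition 4.4]{We} are genuinely satisfied in the present context rather than only in the ``simple surface'' context of \cite{We}: namely that $P\Omega N$ here is still the relevant irreducible $3$--manifold (true, since $N$ is a surface with nonempty boundary, so $\Omega N$ and hence $P\Omega N$ are handlebodies, in particular irreducible) and that the embedded torus $f(\partial N\times\sphere)$ is two-sided and not boundary-parallel in a way that would obstruct compression — again this is exactly as in \cite{We}, since the torus and the curve $f(x_0,\cdot)$ are built from the same local data. Since nothing in the topological input changes when we pass from simple surfaces to our more general $N$ (the extra generality of the present paper lives in $M$, not in the target $N$ of these constructions), the verbatim citation is legitimate.

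Accordingly the proof is a one-line reference:

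\begin{proof}
The proof is the same as the proof of \cite[Proposition 4.4]{We}: having established that $f$ embeds $\partial N\times\sphere$ as a two-sided torus in the irreducible $3$--manifold $P\Omega N$ (recall $N$ is a compact surface with nonempty boundary, so $\Omega N$, and hence $P\Omega N$, is a handlebody) and that the meridian $f(x_0,\cdot)$ of this torus is contractible in $P\Omega N$, the loop theorem provides an embedded compressing disk, and since $f(x_0,\cdot)$ is a primitive curve on the torus this disk may be chosen with boundary exactly $f(x_0,\cdot)$. Hence $f(x_0,\cdot)$ is isotopically trivial in $P\Omega N$.
\end{proof}
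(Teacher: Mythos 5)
Your headline move --- ``the proof is the same as \cite[Proposition 4.4]{We}'' --- is exactly what the paper does: its proof of this proposition is that one sentence, resting on the two facts established just before (that $f$ is an embedding and that $f(x_0,\cdot)$ is contractible). The problem is the argument you supply to back the citation up, which as written has a genuine gap at its final step. The loop theorem applied to the two-sided torus $T=f(\partial N\times\sphere)$ produces a compressing disk whose boundary is \emph{some} essential simple closed curve on $T$ whose class lies in $\ker\bigl(\pi_1(T)\to\pi_1(P\Omega N)\bigr)$; it does not let you prescribe that boundary curve. Primitivity of $[f(x_0,\cdot)]$ in $\pi_1(T)\cong\integer^2$ is not enough: if the kernel has rank two the compressing curve need not be isotopic to $f(x_0,\cdot)$ on $T$, and in general a primitive curve on an embedded torus that is null-homotopic in the ambient $3$--manifold need not bound an embedded disk at all. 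A concrete counterexample to your inference: the Seifert longitude on the boundary torus of a tubular neighborhood of a nontrivial knot in $S^3$ is primitive on that torus, null-homotopic in the irreducible ambient manifold, and the torus is compressible, yet the longitude is a knotted curve and bounds no embedded disk. To close the gap one must actually control $\ker\bigl(\pi_1(T)\to\pi_1(P\Omega N)\bigr)$ --- e.g.\ show it is generated by $[f(x_0,\cdot)]$, using the fiberwise structure of $f$ and the behavior of the $\partial N$--direction curves --- and that is where the substance of \cite[Proposition 4.4]{We} lies; ``primitive, hence the disk may be chosen with boundary exactly $f(x_0,\cdot)$'' skips it.

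Your hypothesis-checking is also off on a point that matters for this paper. In \cite{We} both surfaces are simple, so there $N$ is a disk and $P\Omega N$ is a solid torus; in the present paper $N$ is only a compact surface with the same scattering data as $M$, and your claim that $\Omega N$, hence $P\Omega N$, is a handlebody because $\partial N\neq\emptyset$ is false in general ($N\times\sphere$ is a handlebody only when $N$ is a disk; for an annulus it is $T^2\times[0,1]$). Irreducibility of $P\Omega N\cong N\times\sphere$ does hold, so that particular conclusion survives, but the assertion that ``nothing in the topological input changes when we pass from simple surfaces to our more general $N$'' is precisely what must be verified when importing Wen's proof, not something that can be waved through.
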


  However, by \cite[Theorem 3.14]{We},
  $f(x_0, \cdot)$ is isotopically non-trivial, which contradicts Proposition \ref{prop:trivial'}.
  Hence $L = 0$.
\section{Proof of the main Theorem}

Proposition \ref{prop:no_trapped} enable us extending $\Phi$ to $\Gamma^p_M$.
\begin{prop}
  There is a map $\Phi : \Gamma^p_M \rightarrow \Gamma^p_N$
   which satisfies the following conditions.
  \begin{enumerate}
    \item $\Phi$ is continuous with respect to the compact open topology.
    \item For any $\gamma(t) \in \partial M$, reparametrizing $\Phi(\gamma)$ if necessary, we have $\Phi(\gamma)(t) = h(\gamma(t))$
      and $\frac{\Phi(\gamma)'(t)}{|\Phi(\gamma)'(t)|} = \varphi(\frac{\gamma'(t)}{|\gamma'(t)|})$.
  \end{enumerate}
\end{prop}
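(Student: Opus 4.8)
The plan is to extend the continuous map $\Phi$, defined earlier on $\Gamma^0_M$, to all of $\Gamma^p_M$. The $p$-geodesics of $\Gamma^p_M$ not already covered are the constant curves on $\partial M$ (the set $\Gamma^c_M$) and the non-constant $p$-geodesics tangent to $\partial M$ at some point of $\partial S_+$. Since $F_M$ is finite, $\partial S_+$ is a finite set of switch points and every $\gamma\in\Gamma^p_M$ has only finitely many switch points; $\gamma$ also has finite length since $M$ has no trapped geodesics, so it decomposes as $\gamma=\gamma_1*\dots*\gamma_n$ in which each $\gamma_k$ is either a geodesic segment — lying in $\Gamma^2_M$, where $\Phi$ is already defined, or, when $\gamma_k$ is tangent to $\partial M$ at a switch point, in $\Gamma'_M$ — or a boundary arc, lying in $\Gamma^b_M$, where $\Phi$ is already defined as $h$ (Propositions \ref{prop:concave} and \ref{prop:totally_geodesic}, which also give $h\circ\gamma_k\in\Gamma^p_N$). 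It therefore suffices to define $\Phi$ on $\Gamma'_M$ and on $\Gamma^c_M$ and then set $\Phi(\gamma):=\Phi(\gamma_1)*\dots*\Phi(\gamma_n)$; on $\Gamma^c_M$ one simply lets $\Phi(\gamma)$ be the constant curve at $h(\gamma(0))$.

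For $\gamma\in\Gamma'_M$ I would pick a sequence $\gamma_i\in\Gamma^1_M$ with $\gamma_i\to\gamma$ in the compact--open topology (such a sequence exists because $M$ has no conjugate points, by Proposition \ref{prop:no_conj}, and no trapped geodesics, so suitable perturbations of $\gamma$ are genuine geodesics transverse to $\partial M$) and set $\Phi(\gamma):=\lim_i\Phi(\gamma_i)$. The heart of the matter, and the only place the results on the convex part of the boundary are used, is that this limit exists in $\Gamma^p_N$ and is independent of the approximating sequence. As noted in the remark after the construction of $\Phi$ on $\Gamma^0_M$, the sole obstruction to convergence is the appearance in $N$ of a closed geodesic tangent to $\partial N$: approaching a tangency of $\gamma$ with $\partial M$ from its two sides would then produce two such closed geodesics traversed oppositely, so $\lim_i\Phi(\gamma_i)$ would depend on the approach. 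But Proposition \ref{prop:no_trapped}, whose proof establishes $L=0$, rules out any closed geodesic of $N$ tangent to $\partial N$; together with Propositions \ref{prop:concave} and \ref{prop:totally_geodesic} and the smoothness of $M$ and $N$ — which show that $h$ carries the geodesic-curvature function of $\partial M$ onto that of $\partial N$, hence sends $S_+$, $S_-$, $S_0$ and the switch points of $M$ to the corresponding objects for $N$ — this forces the limit to be an honest $p$-geodesic of $N$ with both endpoints on $\partial N$. Independence of the sequence then follows by interleaving two approximating sequences: every subsequential limit of $\Phi$ along the interleaved sequence exists, and all such limits coincide because their endpoints, boundary-tangent directions and length are prescribed by $h$, $\varphi$ and the common scattering data.

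It remains to check property (2) and continuity. Property (2) holds on $\Gamma^1_M$ by the definitions $\Phi(\gamma_X)=\gamma_{\varphi(X)}$ and \eqref{eq:phi} of $\varphi$; it persists through the limit defining $\Phi$ on $\Gamma'_M$ because $h$ and $\varphi$ are continuous, through the concatenation because $\Phi$ is $h$ on boundary arcs, and on $\Gamma^c_M$ trivially. Continuity of the extended $\Phi$ on all of $\Gamma^p_M$ follows by a diagonal argument from its continuity on $\Gamma^2_M$ and $\Gamma^b_M$, the sequential description above, and — at constant curves and at $\Gamma'_M$ — the same no-closed-geodesic input from Proposition \ref{prop:no_trapped}. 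I expect the main obstacle to be precisely this convergence/continuity step, that is, ruling out the flapping closed geodesic in $N$ uniformly — in particular when the tangency of $\gamma$ with $\partial M$ occurs at a switch point lying in $\partial S_+$ — which is exactly why the whole preceding section on the convex part of the boundary, and with it the no-conjugate-points and no-trapped-geodesics hypotheses (used much as in the proofs of Propositions \ref{prop:totally_geodesic} and \ref{prop:reverse}), was needed.
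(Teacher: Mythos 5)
Your proposal is correct and follows essentially the same route as the paper: it uses Proposition \ref{prop:no_trapped} (the $L=0$ result ruling out closed geodesics of $N$ tangent to $\partial N$) to justify extending $\Phi$ by limits to tangent/constant limits of $\Gamma^1_M$, maps boundary arcs via $h$ (justified by Propositions \ref{prop:concave} and \ref{prop:totally_geodesic}), and concatenates using the finite decomposition coming from finiteness of $F_M$. Your write-up is somewhat more explicit than the paper's about well-definedness of the limit and continuity, but the underlying argument is the same.
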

\begin{proof}
  Recall that $\Gamma^1_M$ is the space of non-constant geodesics $[0, 1] \rightarrow M$ whose end points are on $\partial M$ and which are not tangent to $\partial M$.

  Let $\Gamma^4_M$ be the space of p-geodesics in $M$ which run along the non-convex part of $\partial M$.
  Note that all constant geodesics are in $\Gamma^4_M$.

  Recall that $\Phi : \Gamma^1_M \rightarrow \Gamma^p_N$ is defined
  as $\Phi(\gamma_X) = \gamma_{\varphi(X)}$.
  We extend $\Phi$ to a continuous map from $\overline{\Gamma^1_M}$ to $\Gamma^p_N$ by taking limits.
  This is well-defined because $M$ and $N$ have the same scattering data.
  For each $\gamma \in \Gamma^4_M$,
  we define $\Phi(\gamma) := h \circ \gamma$.

  Now, pick any $\gamma \in \partial \Gamma^p_M$.
  Since $F_M$ is finite, we have a decomposition
  $\gamma = \gamma_1 * \gamma_2 * \dots * \gamma_n$
  where each $\gamma_k$ is either in $\Gamma^2_M$ or in $\partial M$.
  Define $\Phi(\gamma) = \Phi(\gamma_1) * \Phi(\gamma_2) * \dots * \Phi(\gamma_n)$.
\end{proof}

Now, we may extend $e$ to $\Gamma^p$ and extend $l$ to $Q_M(p, q)$.
As before, $l$ is constant on each component of $Q_M(p, q)$.
Finally, the main theorem is a easy consequence of Proposition \ref{prop:reverse}.

\begin{proof}[Proof of Theorem \ref{thm:main}]
  Since $(\gamma, \gamma_q)$ and $(\gamma_p, -\gamma)$ are in the
  same component of $Q_M(p, q)$,
  $l(\gamma, \gamma_q) = l(\gamma_p, -\gamma)$.
  Hence $e(\gamma) - e(\gamma_q) = e(\gamma_p) - e(-\gamma)$.
  $e(\gamma_q) = \ell(\Phi(\gamma_q)) - \ell(\gamma(q)) = 0 - 0 = 0$.
  Similarly, $e(\gamma_p) = 0$.
  It is also obvious that $e(\gamma) = e(-\gamma)$.
  Hence $e(\gamma) = 0$,
  which implies that $M$ and $N$ have the same lens data.

\end{proof}

%\subsection{Switch points on the boundary}
%
%\begin{prop}
%  The geodesic curvature of $\partial M$ at $p \in \partial M$ is equal to the geodesic curvature of $\partial N$ at $h(p) \in \partial N$.
%\end{prop}
%
%\begin{proof}
%  When the boundary is strict convex or concave,
%  the scattering data should give the $C^\infty$ jet of the metric on the boundary.
%  Totally geodesic segment can also be seen from the scattering data.
%  I need to put more details there but I do not know how to deal with certain corner cases when we have a infinite $F_M$.
%\end{proof}
%
%As a corollary,
%we may assume that $F_M$ is finite.
%If not, we could extend $M$ smoothly to a bigger manifold $M'$ such that $M'$ has no trapped geodesics, that $M'$ has no conjugate points, and that $F_{M'}$ is finite.
%Gluing the same collar to $N$, we obtain a manifold $N'$ which has a $C^1$ metric on it.
%Then $M$ and $N$ have the same scattering data if and only if $M'$ and $N'$ have the same scattering data.
%The same goes for lens data.

\section{Manifolds with boundary and no conjugate points}
\label{NCP}

In this section we consider compact Riemannian manifolds $M$ with smooth
boundary $\partial M$ such that no geodesic segment of $M$ has conjugate
points.  Our main goal is to prove Proposition \ref{minimizing}.

Standard applications of the second variation formula (see for example \cite%
{Do}) about a geodesic segment $\gamma$ having no conjugate points yields
that for every nontrivial differentiable one parameter family $\gamma_s(t)$
of curves in the space $\mathcal{C}_{(x,y)}$ of curves from $x=\gamma(0)$ to
$y=\gamma(L)$ with $\gamma_0=\gamma$ we have for the energy function $%
E(\gamma_s)=\int_0^L|\gamma_s^{\prime}(t)|^2dt$ that $\frac {d^2}{ds^2}%
|_{s=0}E(\gamma_s)>0$. (Here nontrivial means the variation field is not the
0 field.) The fact that $\frac {d}{ds}|_{s=0}E(\gamma_s)=0$ just follows
from the fact that $\gamma$ is a geodesic. This tells us (since we can
reduce to a finite dimensional space of piecewise geodesics) that $\gamma$
is a strict local minimum of the energy (and the length) in the space of
piecewise smooth paths between the endpoints.

\bigskip

The finiteness condition on $F_M$ eliminates the problem of intermittent points. Thus we will be
able to assume that a $p$-geodesic consists of a finite number of segments
each of which is either a geodesic (possibly with interior points where it
grazes the boundary) or a geodesic on the boundary (i.e. a segment of the
boundary in the two dimensional case).

We will show (Lemma \ref{localmin}) that the local minimizing property of
geodesics is inherited by $p$-geodesics for two dimensional manifolds with
boundary and no conjugate points.. The main problem is that the distance
function (and hence the energy function) is not $C^2$ even for the distance
between interior points (since the minimizing path can run along the
boundary for part of the time). Thus we need to be careful making second
variation arguments. On the other hand, we will be able to reduce to a
finite dimensional case (using piecewise $p$-geodesics) since it was shown
in \cite{ABB1} that for any compact $K\subset M$ there is a $b$ such that if
$p\in K$ and $q\in K$ have $d(p,q)\leq b$ then there is a unique minimizing $%
p$-geodesic between $p$ and $q$ and it varies continuously with $p$ and $q$.
Call $b$ the uniqueness radius of $K$. Thus if $\gamma:[0,1]\to M$ is any
path in $K$ and $0=t_0<t_1<t_2<...<t_{k-1}<t_k=1$ is a partition such that $%
d(\gamma(t_i),\gamma(t_{i+1}))\leq b$ then replacing $\gamma|[t_i,t_{i+1}]$
with the $p$-geodesic from $\gamma(t_i)$ to $\gamma(t_{i+1})$ (parameterized
on $[t_i,t_{i+1}]$) yields a piecewise $p$-geodesic curve with at most the
same energy.

We will consider the space of piecewise $C^1$ curves $\gamma:[0,1]\to M$
between two fixed points $x$ and $y$ in $M$ such that $E(\gamma)\leq E$ for
some fixed $E$. Since the length of any such curve is less than or equal to $E^{1/2}$,
all such curves lie in the closed (hence compact) ball $B(x,E^{1/2})$ of
radius $E^{1/2}$. We let $b$ be the uniqueness radius of that ball. Now for
any partition $0=t_0<t_1<t_2<...<t_{k-1}<t_k=1$ such that $t_{i+1}-t_i<%
\frac {b^2}{E}$ and any such $\gamma$, $L(\gamma|_{[t_i,t_{i+1}]})=%
\int_{t_i}^{t_{i+1}}|\gamma^{\prime}(t)|dt\leq
\{\int_{t_i}^{t_{i+1}}|\gamma^{\prime}(t)|^2dt%
\}^{1/2}(t_{i+1}-t_i)^{1/2}<E^{1/2}\frac b {E^{1/2}}=b$.

On The space $M^{k+1}$ (the product of $k+1$ copies of $M$) for each
partition $0=t_0<t_1<t_2<...<t_{k-1}<t_k=1$ we define the energy function $%
E_{(t_0,t_1,t_2,...,t_k)}:M^{k+1}\to R$ by
\begin{equation*}
E_{(t_0,t_1,t_2,...,t_k)}(x_0,x_1,...,x_k)={\sum}_{i=0}^k \frac {%
d(x_i,x_{i+1})^2}{t_{i+1}-t_i}.
\end{equation*}
This is defined so that $\gamma$ the piecewise $p$-geodesic curve defined by
$(x_0,x_1,...,x_k)$ satisfies $E(%
\gamma)=E_{(t_0,t_1,t_2,...,t_k)}(x_0,x_1,...,x_k)$. To be precise $\gamma$
is built of minimizing $p$-geodesics from $x_i$ to $x_{i+1}$ parameterized
proportional to arclength on $[t_1,t_{i+1}]$. Of course $%
E_{(t_0,t_1,t_2,...,t_k)}(\gamma(t_0),\gamma(t_1),\gamma(t_2),...,%
\gamma(t_k))\leq E(\gamma)$ for any curve $\gamma$.

The technical tool that will replace second variation arguments is

\begin{lem}
\label{localmin}

Let $M$ be a compact two dimensional manifold with smooth boundary, no
conjugate points and the boundary has finite $F_M$. Let $\gamma:[0,1]\to M$ be
$p$-geodesic from $x$ to $y$ of length $L$ parameterized proportional to
arclength, $E>L^2$, and $b$ defined as above. Then for any partition $%
0=t_0<t_1<t_2<...<t_{k-1}<t_k=1$ with $t_{i+1}-t_i<\frac {b^2}{E}$ there is
a neighborhood $U\subset M^{k+1}$ of $(\gamma(t_0),\gamma(t_1),%
\gamma(t_2),...,\gamma(t_k))\in M^{k+1}$ such that for any $u\in U$ with $u\neq (\gamma(t_0),\gamma(t_1),%
\gamma(t_2),...,\gamma(t_k))$ we have
$$
E_{(t_0,t_1,t_2,...,t_k)}(\gamma(t_0),\gamma(t_1),\gamma(t_2),...,%
\gamma(t_k))<E_{(t_0,t_1,t_2,...,t_k)}(u).$$
\end{lem}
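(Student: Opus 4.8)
The plan is to prove Lemma \ref{localmin} by establishing a \emph{strict local minimum} property for the ``short'' energy $E_{(t_0,\dots,t_k)}$ at the critical point coming from the $p$-geodesic $\gamma$, treating the tangency-to-boundary phenomenon carefully. The first step is to reduce to the case of a single partition interval: it suffices to show that each summand $u\mapsto d(x_i,x_{i+1})^2/(t_{i+1}-t_i)$, together with the coupling at the shared endpoints, is ``convex-like'' near the critical value, and that the only degeneracy directions are killed by the no-conjugate-points hypothesis. Concretely I would write $E_{(t_0,\dots,t_k)}(u) = \sum_i \frac{d(x_i,x_{i+1})^2}{t_{i+1}-t_i}$ for $u=(x_0,\dots,x_k)$ with $x_0=x$, $x_k=y$ fixed, and analyze the behavior of each distance term as $x_i,x_{i+1}$ move.

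The key point, and the reason we cannot just quote second variation, is that the distance function $d(\cdot,\cdot)$ between two points is only $C^1$ (not $C^2$) where the minimizing $p$-geodesic grazes or runs along the boundary. Here is how I would handle it. Split into two regimes according to whether the minimizing $p$-geodesic $\sigma_i$ from $x_i$ to $x_{i+1}$ (for $u$ near the base point) has an interior boundary arc or is an honest interior geodesic. Where $\sigma_i$ is an honest geodesic segment with no boundary contact, $d^2$ is smooth there and the usual second-variation estimate applies: since $\gamma$ has no conjugate points, the Hessian of the energy in that block is positive definite, giving a strict minimum in those directions. Where $\sigma_i$ touches the boundary, I would use the one-sided structure: the boundary is concave (as noted in the excerpt, a $p$-geodesic can only touch $\partial M$ where the boundary is concave), so moving the intermediate vertices toward the interior can only \emph{increase} length — the distance term is a convex function of the ``push into the interior'' direction — while moving along the boundary-tangent direction is governed by the smooth geodesic part on either side. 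In all cases, for $u\neq$ base point we get $E_{(t_0,\dots,t_k)}(u) > E_{(t_0,\dots,t_k)}(\text{base point})$. A clean way to organize this: use the finiteness of $F_M$ to refine the partition so each interval's $p$-geodesic segment is either purely interior or purely a boundary geodesic arc, and on each purely-interior block invoke the classical strict-minimum statement already recalled in the text (via the reduction to finite-dimensional piecewise geodesics and $\frac{d^2}{ds^2}E>0$), while on boundary blocks use that the boundary arc is length-minimizing in its homotopy class and that perturbations off the boundary strictly increase length by concavity.

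I expect the main obstacle to be the \textbf{mixed} case: a partition interval whose minimizing $p$-geodesic has both interior-geodesic pieces and a boundary piece, joined at a switch point, so that $d^2(x_i,x_{i+1})$ is genuinely non-$C^2$ at the base point. The resolution I would pursue is to show that the directional derivatives of $d^2$ from the two sides of the base point differ in a way that still yields strict positivity of all one-sided second differences: write the variation field along $\sigma_i$ as a sum of a ``tangential'' part (parallel, near the switch point, to $\partial M$) and a transverse part; the transverse part strictly increases length by boundary concavity (and no-conjugate-points ensures the geodesic pieces on each side are uniquely minimizing so nothing cancels), and the tangential part only reshuffles where the switch point sits, for which a direct computation with the first-variation (arc-length) formula plus the fact that $\gamma$ minimizes in its homotopy class (Proposition \ref{minimizing}, applied to the interior blocks) shows no decrease is possible and equality forces $u$ to be the base point. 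Once each block obeys the strict inequality for $u$ not equal to the base point, summing over $i$ and using $t_{i+1}-t_i<b^2/E$ (so the minimizing $p$-geodesics exist, are unique, and vary continuously, per \cite{ABB1}) gives the claimed neighborhood $U$ and completes the proof.
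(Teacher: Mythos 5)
There is a genuine gap, and it sits exactly where you predicted the difficulty would be. First, a logical problem: you invoke Proposition \ref{minimizing} (``$\gamma$ minimizes in its homotopy class'') inside your treatment of the mixed case, but in the paper Lemma \ref{localmin} is the technical tool from which Proposition \ref{minimizing} is later deduced by a minimax argument, so this is circular. Even granting that for purely interior blocks you could substitute the standard no-conjugate-points second-variation fact recalled at the start of the section, the mixed/tangential case is precisely where your argument reduces to the assertion ``equality forces $u$ to be the base point,'' which is the whole content of the lemma at those configurations. Refining the partition so each interval is ``purely interior or purely boundary'' does not remove the difficulty: it relocates it to the shared vertex at a switch point, where the admissible perturbation that slides the configuration along the boundary (equivalently, detaches the interior chord from the concave boundary a bit earlier or later) has vanishing first variation, the squared distance is not $C^2$, and no blockwise convexity statement is available. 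Indeed strictness is genuinely \emph{not} a block-by-block phenomenon: individual summands, and even unions of consecutive summands away from the fixed endpoints, admit second-order flat directions of this sliding type, so summing blockwise inequalities cannot produce the strict inequality claimed.

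The paper's proof avoids any second-derivative reasoning at these points and is global along the curve. Its two key steps, absent from your proposal, are: (a) a topological crossing argument --- between consecutive partition points lying on concave boundary arcs, the interior chord $\tau$ extends slightly as a geodesic by concavity, and any nearby competitor joining the same two points must cross this extension twice; splicing in the piece of $\tau$ between the crossings and using local minimality of the chord and of the concave boundary arcs gives a length comparison with equality only for coincidence; and (b) a normalization-plus-ordering argument --- reparametrize the competing piecewise $p$-geodesics proportional to arclength, push each vertex assigned to a concave-boundary time onto the boundary (strictly decreasing both adjacent distances by strict concavity), write $w^j_i=\gamma(t_i+\epsilon^j_i)$, and observe that (a) forces $\epsilon^j_i\ge\epsilon^j_{i+m}$ for consecutive concave-boundary indices while the fixed endpoints force the first such $\epsilon$ to be $\le 0$ and the last $\ge 0$, so all vanish; uniqueness then pins every vertex and strict energy decrease at each normalization step shows the original competitor was the base point. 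You would also need the paper's preliminary reduction showing that validity for one partition of mesh $<b^2/E$ implies it for all (which is what lets one choose $t_i\notin F_M$ and place a vertex on every concave interval); your plan silently assumes a conveniently chosen partition, whereas the lemma is quantified over all admissible partitions. Without ingredients (a) and (b), or a proved substitute handling the boundary-sliding degeneracy, the proposal does not close.
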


\begin{proof}
The condition on $F_M$ tells us that $\gamma$ consists of a finite number of
segments each of which is either a geodesic with only endpoints on the boundary, a geodesic segment that lies on the boundary,
or a segment of the boundary where the boundary is strictly concave.  We can assume that $\gamma(0)$ and $\gamma(1)$ are not in the finite set $F_M$ since we can handle that case by taking limits of the more general case.

We next point out that that if the proposition is true for any partition $%
0=t_0<t_1<t_2<...<t_{k-1}<t_k=1$ with $t_{i+1}-t_i<\frac {b^2}{E}$ then it
is true for all such partitions. To see this let $%
0=s_0<s_1<s_2<...<s_{l-1}<s_l=1$ be such a partition where the proposition
does not hold. That means there is a sequence $u^i\in M^{l+1}$ representing
piecewise $p$-geodesic paths $\gamma^i$ from $x$ to $y$ which converges to $%
(\gamma(s_0),\gamma(s_1),\gamma(s_2),...,\gamma(s_l))$ such that $%
E_{(s_0,s_1,s_2,...,s_l)}(u^i)\leq E(\gamma)$. This means that $%
E(\gamma^i)\leq E(\gamma)$. Thus $E_{(t_0,t_1,t_2,...,t_k)}(\gamma^i(t_0),%
\gamma^i(t_1),\gamma^i(t_2),...,\gamma^i(t_k))\leq E(\gamma^i)\leq E(\gamma)$
while
$(\gamma^i(t_0),\gamma^i(t_1),\gamma^i(t_2),...,\gamma^i(t_k))$ converges to
$(\gamma(t_0),\gamma(t_1),\gamma(t_2),...,\gamma(t_k))$ which says that the
proposition does not hold for the partition $0=t_0<t_1<t_2<...<t_{k-1}<t_k=1$
either.

The previous paragraph of the proof now allows us to choose our $t_i$ such that $t_i\notin F_M$, thus one of three things hold.  In the first instance $\gamma(t_i)$ is an interior point of $M$. In this case we
call $t_i$ interior.  In the second, for all $t$ in some open interval about $t_i$, $%
\gamma(t)\in \partial M$ and is a geodesic (i.e. the geodesic curvature of $\partial M$ is $0$). In this case $t_i$ is called boundary geodesic. In the final case, for all $t$ in some open interval about $t_i$, $%
\gamma(t)\in \partial M$ and the boundary is strictly concave.  In this case we call $t_i$ boundary concave. The
condition on $F_M$ allows us to make sure that there is at least one $%
\gamma(t_i)$ for each concave boundary interval of $\gamma$. Consider a sequence $%
t_i, t_{i+1},...,t_{i+m}$ such that $m>1$, $t_i$ and $t_{i+m}$ are concave boundary
while $t_{i+1},...,t_{i+m-1}$ are interior or boundary geodesic.   Then the curve $%
\gamma|_{[t_i,t_{i+m}]}$ is precisely of the following form: there are
numbers $s_0$ and $s_1$ so that $t_i<s_0<t_{i+1}$, $t_{i+m-1}<s_1<t_{i+m}$, $%
\gamma|_{[s_0,s_1]}$ is a geodesic $\tau:[s_0,s_1]\to M$ while $%
\gamma|_{[t_i,s_0]}$ and $\gamma|_{[s_1,t_{i+m}]}$ are concave segments of the
boundary. By the concavity of the boundary at $\gamma(s_0)$ and $\gamma(s_1)$
there is an $\epsilon>0$ such that $\tau$ can be extended to a geodesic on $%
[s_0-\epsilon,s_1+\epsilon]$. Now any curve $\sigma:[t_i,t_i+m]\to M$ from $%
\gamma(t_i)$ to $\gamma(t_{i+m})$ close enough to $\gamma$ must intersect $%
\tau$ in at least two points $\sigma(a)=\tau(a^{\prime})$ and $%
\sigma(b)=\tau(b^{\prime})$ with $s_0-\epsilon<a^{\prime}\leq s_0$ and $%
s_1\leq b^{\prime}< s_1+\epsilon$. (See Figure \ref{fig:intersect2}.)
\begin{figure}[h]
  \labellist
  \small\hair 2pt
  \pinlabel $\gamma(t_i)$ [rb] at 41 114
  \pinlabel $\gamma(t_{i+m})$ [l] at 136 125
  \pinlabel $\tau$ [t] at 83 124
  \pinlabel $\sigma$ [b] at 84 135
  \endlabellist
  \center
  \includegraphics[width=0.4\textwidth]{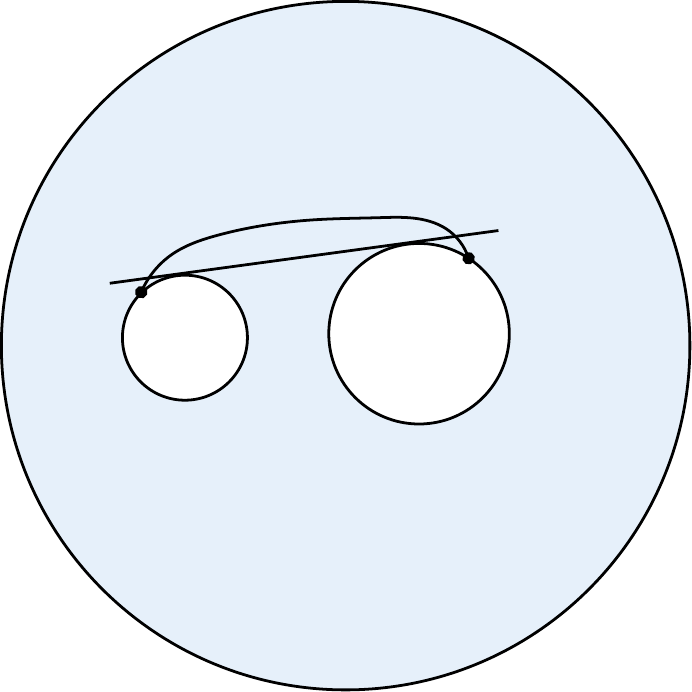}
  \caption{Any curve $\sigma$ from $\gamma(t_i)$ to $\gamma(t_{i+m})$ close enough to $\gamma$ must intersect $%
\tau$ in at least two points.}
  \label{fig:intersect2}
\end{figure}

Note that the unique local minimality of $%
\tau$, of $\gamma|_{[t_i,s_0]}$, and of $\gamma|_{[s_1,t_{i+m}]}$ tell us
that for $\sigma$ close enough to $\gamma$
\begin{equation*}
L(\sigma|_{[t_i,t_{i+m}]})\geq L(\sigma|_{[t_i,a]}\cup \tau|{%
[a^{\prime},b^{\prime}]} \cup \sigma|_{[b,t_{i+m}]})=
\end{equation*}
\begin{equation*}
= L(\sigma|_{[t_i,a]}\cup \tau|{[a^{\prime},s_0]} \cup \tau|{[s_0,s_1]} \cup
\tau|{[s_1,b^{\prime}]} \cup\sigma|_{[b,t_{i+m}]}))\geq
\end{equation*}
\begin{equation*}
\geq L(\gamma|{[t_i,s_0]}\cup \tau|{[s_0,s_1]}\cup \gamma|{[s_1,t_{i+1}]}%
)=L(\gamma|{[t_i,t_{i+1}]}),
\end{equation*}
and that equality can only hold if $\sigma|_{[t_i,t_{i+1}]}$ coincides with $%
\gamma|_{[t_i,t_{i+1}]}$ when parameterized proportional to arclength. Hence
$\gamma|_{[t_i,t_{i+m}]}$ is a strict local minimum of length for paths
between its endpoints. A similar (slightly easier) argument works when $i=0$
(respectively $i+m=k$) and $t_0$ (respectively $t_k$) is interior or boundary geodesic.

Assume that $\gamma$ does not satisfy the statement of the Lemma. Then there
is a sequence $u^j\in M^{k+1}$ never equal to $(\gamma(t_0),\gamma(t_1),%
\gamma(t_2),...,\gamma(t_k))$ but converging to it with corresponding
piecewise geodesics $\gamma^j$ converging to $\gamma$ and with $%
E(\gamma^j)\leq E_{(t_0,t_1,t_2,...,t_k)}(u^i)\leq
E_{(t_0,t_1,t_2,...,t_k)}(\gamma)=E(\gamma)$. In particular $L(\gamma^j)\leq
L(\gamma)$. Thus if $\bar \gamma^i$ is the reparametrization of $\gamma^i$
proportional to arclength then $w^j=(w_0^j,w_1^j,...,w_k^j)\equiv (\bar
\gamma^j(t_0),\bar \gamma^j(t_1),...,\bar \gamma^j(t_k))$ also converges to $%
(\gamma(t_0),\gamma(t_1),\gamma(t_2),...,\gamma(t_k))$ and for each $i$, $%
d(w^j_i,w^j_{i+1})\leq L(\gamma|_{[t_i,t_{i+1}]})$. Further if $t_i$ is
concave boundary then the strict concavity of the boundary near $\gamma(t_i)$ says
that for large $j$ we can replace $w^j_i$ with a point on the boundary so as
to decrease both $d(w^j_{i-1},w^j_i)$ and $d(w^j_i,w^j_{i+1})$ (unless $%
w^j_i $ lies on the boundary to begin with). Thus we will assume that $w^j_i$
lies on the boundary when $t_i$ is concave boundary. Define $\epsilon^j_i$ by $%
w^j_i=\gamma(t_i+\epsilon^j_i)$ which is well defined for all large $j$. By
the previous paragraph if $t_i$ and $t_{i+m}$ are consecutive concave boundary then $%
\epsilon^j_i-\epsilon^i_{i+m}\geq 0$, while if $t_{i_0}$ is the first concave
boundary and $t_{i_1}$ the last concave boundary we see $\epsilon^j_{i_0}\leq 0$ and
$\epsilon^j_{i_1}\geq 0$. This implies that for all concave boundary $t_i$, $%
\epsilon^j_i=0$ and $w^j_i=\gamma(t_i)$. Now the uniqueness of the previous
paragraph forces $w^j_i=\gamma(t_i)$ for all $i$. Since each step in moving
from $u_i$ to $w_i$ strictly decreased energy (unless no change was made) we
see that the original $u^j$ had to be simply $u^j_i=\gamma(t_i)$ yielding
the Lemma. \
\end{proof}

\bigskip

\begin{proof}
(of Proposition \ref{minimizing})

By passing to the universal cover we can assume that $M$ is simply connected
and that $p$-geodesic segments satisfy Lemma \ref{localmin}. We will
show that p-geodesic segments minimize. The idea is to mimic a standard
minimax argument using Lemma \ref{localmin} in place of saying that
all $p$-geodesics are critical points of index 0 for $E$.

Let $\gamma:[0,1]\to M$ be a p-geodesic segment (parameterized proportional to
arclength) from $x$ to $y$ and let $\tau:[0,1]\to M$ be a length minimizing $%
p$-geodesic from $x$ to $y$. We may assume that $E(\gamma) =
L^2(\gamma)>L^2(\tau)=E(\tau)$. By assumption there is a homotopy from $%
\gamma$ to $\tau$ in the space $\mathcal{C}_{(x,y)}$ of $C^1$ rectifiable
curves from $x$ to $y$. If $E$ is the maximum energy of a curve in this
homotopy and $b$ is the uniqueness radius of the closed ball of radius $%
E^{1/2}$ then by using the partition with $t_i=\frac i k$ where $\frac 1 k<%
\frac {b^2}{E}$ we can replace each of the curves in the homotopy with
piecewise $p$-geodesics (each piece of length less than $b$ parameterized on
an interval of length $\frac 1 k$). This defines a curve $u(s):[0,1] \to
M^{k+1}$ from $(x=\gamma(0),\gamma(\frac 1 k), \gamma(\frac 2
k),....,\gamma(1)=y)$ to $(x=\tau(0),\tau(\frac 1 k),\tau(\frac 2
k),....,\tau(1)=y)$ such that $E_{0,\frac 1 k,\frac 2 k,...,1}(u(s))\leq E$.
In fact $u(s)$ lies in the compact space $B(x,E^{1/2})^{k+1}$. We can take
the neighborhood $U(\gamma)$ of $(x=\gamma(0),\gamma(\frac 1 k),
\gamma(\frac 2 k),....,\gamma(1)=y)$ promised by Lemma \ref{localmin}
to be a small metric ball in $M^{k+1}$ (in the product metric) since the
boundary $\partial U(\gamma)$ is compact $E_{0,\frac 1 k,\frac 2
k,...,1}(u)\geq L^2(\gamma)+\epsilon$ for some $\epsilon>0$ and all $u\in \partial U(\gamma)$.

We now consider Let $E_0=inf \{max \{E_{0,\frac 1 k,\frac 2
k,...,1}(u(s))|s\in [0,1]\}\}$ where the infimum is taken over the
collection of all such curves $u(s)$. Since any $u(s)$ must cross $\partial
U(\gamma)$ we see that $E_0\geq L(\gamma)^2+\epsilon$ By an earlier argument
we know that all such curves lie in the compact space $B(x,E^{1/2})^{k+1}$.
Usual compactness arguments show that there is a minimax $p$-geodesic $%
\sigma $ from $x$ to $y$. That is: \newline
i) $E(\sigma)=E_{0,\frac 1 k,\frac 2 k,...,1}(\sigma(0),\sigma(\frac 1 k),
\sigma(\frac 2 k),....,\sigma(1))=E_0$ and \newline
ii) there are sequences $u_i$ and $s_i$ with $u_i(s_i)\to \sigma$ and \newline
$E_{0,\frac 1 k,\frac 2 k,...,1}(u_i(s_i))\geq E_{0,\frac 1 k,\frac 2
k,...,1}(u_i(s))$ for all $s$.
\newline
The argument is the same as the usual one - we sketch it. By compactnees
there are convergent sequences as in i) and ii) converging to $\sigma$. The
only thing to check is that we can assume $\sigma$ is $p$-geodesic. If $%
\sigma$ is any piecewise $p$-geodesic curve that is not a $p$-geodesic then
it has a nonzero angle at some join then there is a tangent vector $V\in
T_\sigma M^{k+1}$ which can be extended smoothly in a neighborhood such that
$V(E)<0$ at all points in the neighborhood. Thus (as usual) if our sequence $%
u_i(s_i)$ of curves with maximum energies approaching $E_0$ has no $p$-geodesic
as a minimax point then we could "push the curves" down (using the
above vector fields) to energies below $E_0$ which contradicts the
definition of $E_0$.

The above contradicts Lemma \ref{localmin}. Choose our neighborhood $%
U(\sigma)$ to be a small metric ball centered at $(\sigma(0),\sigma(\frac 1
k), \sigma(\frac 2 k),....,\sigma(1))$ and as before there is an $\epsilon_1$
such that $E_{0,\frac 1 k,\frac 2 k,...,1}(u)\geq E_0+\epsilon_1$ for all $u$
in $\partial U(\sigma)$. However for all large $i$ and all $s$ $E_{0,\frac 1
k,\frac 2 k,...,1}(u_i(s))\leq E_{0,\frac 1 k,\frac 2 k,...,1}(u_i(s_i))\leq
E_0+\frac {\epsilon_1} 2$ but since the curve $u_i(s)$ must intersect $%
\partial U(\sigma)$ we get the desired contradiction.

Note that $\gamma$ is the unique length minimizing path between its endpoints in its homotopy class.  This follows since if $\tau$ is another such then $\tau$ is also a $p$-geodesic and the above minimax argument for paths from $\gamma$ to $\tau$ leads to the same contradiction.

\end{proof}

\begin{rem}
Proposition \ref{minimizing} is false in higher dimensions.

To see this let us first consider a metric $g_0$ on $R^2-(0,0)$ defined in
polar coordinates by $ds^2=dr^2+f^2(r)d\theta^2$ where $f:[0,\infty)\to R^+$
is a smooth function such that: \newline
a) $f(r)=\sinh(r)$ for $r\geq 1$, \newline
b) $f(0)=0$, $f^{\prime}(r)>0$ and $f^{\prime\prime}(r)\geq 0$, \newline
c) $f(r)= r/3$ for $r\leq 1/10$. \newline
It is straightforward to check that such an $f$ exists. For example, find a
smooth function $f^{\prime}$ on $[0,1]$ such that $f^{\prime}(r)=1/3$ for $%
r\in [0,1/10]$, $f^{\prime}(r)=\cosh(r)$ for $r\in [9/10,1]$, $f^{\prime}(r)$
is increasing, and $\int_0^1 f^{\prime}(r)=\sinh(1)$. Then define $f(r)$ as $%
\int_0^r f^{\prime}(t)dt$. Such an $f^{\prime}$ exists since $\frac 1
{30}+\frac 8 {10}\cosh(\frac 9 {10})\approx 1.179802441$ which is larger
than $\sinh(\frac 9 {10}) \approx 1.026516726$

Condition b) tells us that the metric has nonpositive curvature and hence no
conjugate points.

Condition c) tells us that $ds^2$ defines a flat (cone like) metric for $%
r\leq 1/10$ gotten by taking a sector of the flat disc of radius $1/10$
subtending an angle of $\frac {2\pi} 3$ and gluing the edge radii together.
Thus there is a geodesic segment $\tau$ that self intersects (e.g. the
geodesic that corresponds to the straight line between the points 1/20 along
the edge radii). Choose $\epsilon>0$ so that $\tau\subset R^2-B(\epsilon)$
where we let $B(r)$ represent the open ball of radius $r$ centered at $(0,0)$%
. So the metric on $R^2-B(\epsilon)$ has no conjugate points and geodesics
that do not minimize. Of course it is not simply connected.

We will consider a metric on $R^3 - U$ where we think of $R^3$ as $R\times
R^2$ parameterized by $x$, $r$, and $\theta$. The metric will be $%
dx^2+dr^2+sinh(r)d\theta^2$ (i.e. a line cross with the hyperbolic metric on
the plane) when $|x|>1$ and it will be $dx^2+dr^2+f^2(r)d\theta^2$ for $%
|x|\leq 1$. The open set $U=\{(x,r,\theta)| -1<x<1,\ and\ r<r(x)\}$ where $%
r(x)$ is a smooth positive function such that $r(0)=\epsilon$, $r(1)>1$ and $%
r(-1)>1$. In particular $U$ is homeomorphic to a 3-ball and hence $R^3-U$ is
simply connected. We note that condition a) tells us that $g_0$ is just the
hyperbolic metric when $r>1$ and hence the metric $g$ is smooth on $R^3-U$
and has no conjugate points since it has nonpositive curvature. The curve $%
\tau$ on the totally geodesic $\{0\}\times (R^2-B(\epsilon))$ is a self
intersecting geodesic in $g$ and hence not minimizing.

To make this example compact simply use a large closed ball in $R^3$ (with
an extra boundary component) rather than all of $R^3$.
\end{rem}

\begin{Ack}

The second author is grateful for the research support and hospitality provided by Max Planck Institute for Mathematics during the preparation of this article.

\end{Ack}

\bibliography{mybib}{}
\bibliographystyle{alphanum}
\end{document}